\def\Z{\mathbb{Z}}
\def\R{\mathbb{R}}
\def\N{\mathcal{N}}
\def\H{\mathcal{H}}
\newtheorem{theorem}{Theorem}[section]
\newtheorem{lemma}[theorem]{Lemma}
\newtheorem{proposition}[theorem]{Proposition}
\newtheorem{corollary}[theorem]{Corollary}
\theoremstyle{definition}
\newtheorem{definition}[theorem]{Definition}
\newtheorem{remark}[theorem]{Remark}
\newtheorem{example}[theorem]{Example}
\def\int{\operatorname{Int}}
\def\es{\operatorname{S}}
\def\de{\operatorname{D}}
\def\reeb{\mathcal{R}}
\def\ab{\operatorname{Ab}}
\def\geng#1{\langle #1 \rangle}
\def\ind{\operatorname{ind}}
\def\im{\operatorname{Im}}
\def\corank{\operatorname{corank}}
\def\cone{\operatorname{Cone}}
\def\rank{\operatorname{rank}}
\def\id{\operatorname{id}}
\def\epi{\operatorname{Epi}}
\def\aut{\operatorname{Aut}}
\def\diffd{\operatorname{Diff_{\text{\tiny$\bullet$}}}}
\def\rp{\R\!\operatorname{P^2}}
\def\deg{\operatorname{deg}}
\tikzset{snake it/.style={-stealth,
        decoration={snake,
            amplitude = .4mm,
            segment length = 2mm,
            post length=0.9mm},decorate}}
\def\@addpunct#1{%
    \relax\ifhmode
    \ifnum\spacefactor>\@m \else#1\fi
    \fi}
    \newcommand{\zz}[1]{}
\newcommand{\keywordsname}{$2020$ Mathematics Subject Classification}
\def\@setkeywords{%
    {\itshape \keywordsname.}\enspace \@keywords\@addpunct.}
\def\keywords#1{\def\@keywords{#1}}
\let\@keywords=\@empty
\g@addto@macro{\maketitle}{\begingroup%
    \let\@makefnmark\relax  \let\@thefnmark\relax%
    \ifx\@keywords\@mpty\else\@footnotetext{\@setkeywords}\fi%
    \endgroup}
\keywords{Primary: 20F65; Secondary: 57M15, 57R90.  \\
    \indent\indent{\itshape Key words and phrases}: 2-sided submanifold, framed cobordism, Reeb graph, corank of group, \\\indent\indent equivalence of epimorphisms onto free group.  
   \\\indent\indent The authors were supported by the National Science Centre, Poland within the grants: \\
   \indent\indent NCN 2015/19/B/ST1/01458, and  Sheng~1 UMO-2018/30/Q/ST1/00228.
}
\newcommand{\address}{{ \bigskip

\footnotesize
    {\noindent\textsc{Wac\l{}aw Marzantowicz}\\
        Adam Mickiewicz University, Pozna\'n, Faculty of Mathematics and Computer Science\\
        ul. Uniwersytetu Pozna\'nskiego 4, 61-614 Pozna\'n, Poland} \\
        \textit{E-mail address:} \texttt{marzan@amu.edu.pl}\\

\footnotesize
    {\noindent\textsc{\L{}ukasz Patryk Michalak}\\
        Adam Mickiewicz University, Pozna\'n, Faculty of Mathematics and Computer Science\\
        ul. Uniwersytetu Pozna\'nskiego 4, 61-614 Pozna\'n, Poland} \\
    \textit{E-mail address:} \texttt{lukasz.michalak@amu.edu.pl}

}}
\date{}
\title{Relations between Reeb graphs, systems of hypersurfaces and~epimorphisms onto free~groups}
\author{Wac\l{}aw Marzantowicz, \L{}ukasz Patryk Michalak}
\begin{document}

    \maketitle
    \begin{abstract}   	
    	\noindent We construct a correspondence between epimorphisms $\varphi \colon \pi_1(M) \to F_r$ from the fundamental group of a compact manifold $M$ onto the free group of rank $r$, and systems of $r$ framed non-separating hypersurfaces in $M$, which induces a bijection onto framed cobordism classes of such systems. In consequence, for closed manifolds any such $\varphi$ can be represented by the Reeb epimorphism of a~Morse function $f\colon M \to \mathbb{R}$, i.e. by the epimorphism induced by the quotient map $M \to \mathcal{R}(f)$ onto the Reeb graph of $f$. Applying this construction we discuss the problem of classification up to (strong) equivalence of epimorphisms onto free groups, providing a new purely geometrical-topological proof of the solution of this problem for surface groups.\looseness=-1
    \end{abstract}

\section{Introduction}

The Reeb graph $\reeb(f)$ of a Morse function $f\colon M \to \R$ on a closed manifold $M$, as an invariant of the pair $(M,f)$, is a tool of global analysis attracting more attention recently due to its applications to computer graphics as well as its importance in purely mathematical problems (for more details see \cite{Edelsbrunner, Fabio-Landi, Gelbukh:DCG,Gelbukh:filomat, KMS, Prishlyak_nonoriented, Michalak-TMNA, Sharko-functions}). It is constructed by contracting the connected components of levels sets of the function $f$. Since it is a~finite graph, its fundamental group is a~free group  $F_r$ of a finite rank $r \geq 0$. This work is motivated by a natural question: is any epimorphism $\pi_1(M) \to F_r$ represented as the canonical epimorphism $q_\# \colon \pi_1(M)\to \pi_1(\reeb(f))$, induced by the quotient map ${q\colon M \to \reeb(f)}$ for a~Morse function~$f$? The epimorphism $q_\#$ is called the Reeb epimorphism~of~$f$. We give an affirmative answer to this question in Theorem \ref{theorem:realization_of_Reeb_epi}. Below we summarize the main results obtained in this work and the methods used in order to answer the question.


One of the main ingredients in the proof is the correspondence, given by an extended Pontryagin--Thom construction, between homomorphisms $\pi_1(W)\to F_r$ and systems of hypersurfaces  $\mathcal{N}=(N_1,\, \dots\,,N_r)$ consisting of framed and properly embedded submanifolds $N_i$ of codimension $1$ in a compact manifold $W$, possibly with boundary. A~system $\N$ is independent if it is non-separating, and it is regular if each $N_i$ is connected. It is an easy observation that an independent system of hypersurfaces induces a surjective homomorphism. The converse of this fact is the first substantial result of this work (Theorem \ref{theorem:epimorphism_is_induced_by_regular_and_independent_system}). It provides for any epimorphism $\varphi \colon \pi_1(W) \to F_r$ the construction of~a~regular and independent system of hypersurfaces which induces $\varphi$. 



Having these geometric tools, we study the problem of classification of epimorphisms $G \to F_r$ up to equivalence and strong equivalence defined in \cite{Grigorchuk-Kurchanov,Grigorchuk-algebra, Grigorchuk-Zieschang}. Briefly, on the set $\operatorname{Hom}(G,F_r)$ of homomorphisms there are the natural actions of the automorphism groups $\aut(G)$ and $\aut(F_r)$ given by composition. Two homomorphisms are strongly equivalent (resp. equivalent) if they are in the same orbit of the action of $\aut(G)$ (resp. $\aut(G) \times \aut(F_r)$).
First, note that two systems induce the same homomorphism if and only if they are framed cobordant as systems of hypersurfaces (see Definition \ref{def:framed_cobordism_of_systems}).
This leads to a correspondence between strong equivalence classes of epimorphisms $\pi_1(M) \to F_r$ and elements of $\H^{fr}_r(M)/_{\diffd(M)}$, the set of framed cobordism classes of independent and regular systems of size $r$ in $M$ up to diffeomorphisms which preserve the basepoint. It is a one-to-one correspondence if the natural homomorphism $\diffd(M) \to \aut(\pi_1(M))$ is surjective. For example, this holds when $M$ is a closed surface (by the Dehn--Nielsen Theorem) or when $M$ is a~hyperbolic manifold of dimension at least~$3$ (by the Mostow Rigidity Theorem). As an application of the methods we developed, in the proof of Theorem \ref{theorem:calculations_of_cobordisms_for_surfaces} we determine the elements of $\H^{fr}_r(\Sigma)/_{\diffd(\Sigma)}$ for a~closed surface $\Sigma$. This provides a classification up to strong equivalence of epimorphisms $\pi_1(\Sigma) \to F_r$, which was originally shown by R. Grigorchuk, P.~Kurchanov and H.~Zieschang \cite{Grigorchuk-Kurchanov,Grigorchuk-algebra, Grigorchuk-Zieschang} by using more algebraic, but also topological methods (see Theorem~\ref{thm:grigorchuk}).\looseness=-1

Transition from strong equivalence classes to equivalence classes is obtained by considering the action of $\aut(F_r)$, which is generated by elementary Nielsen transformations. We define analogous operations on $\H^{fr}_r(M)$ which cause the same change of an inducing epimorphism as its composition with the corresponding Nielsen transformation. These operations allow us to compute equivalence classes of epimorphisms $\pi_1(\Sigma) \to F_r$ (see~Theorem \ref{theorem:equivalence_for_surfaces_Nielsen_transformations}) as in the Grigorchuk--Kurchanov--Zieschang Theorem.


Next, we exhibit relations to Reeb graph theory. Extending the methods of the second author from~\cite{Michalak-DCG} we assign in Theorem \ref{theorem:factorization_by_Reeb_epimorphism} a~Morse function $f$ on $W$ and its Reeb graph to any system of hypersurfaces without boundary in such a way that its induced homomorphism is factorized by the Reeb epimorphism of $f$. Moreover, if the system is independent, this gives the construction of the initial graph (see Figure \ref{figure:initial_graph}) as the Reeb graph such that submanifolds from the system are components of the same level set of $f$. Subsequently, one of the main result of the paper, Theorem \ref{theorem:realization_of_graph_for_surfaces_with_prescribed_level_sets}, provides, for a regular and independent system $\N$ of hypersurfaces and a graph $\Gamma$ with natural necessary conditions, the construction of a Morse function realizing $\Gamma$ as its Reeb graph such that submanifolds from $\N$ correspond to prescribed edges of $\Gamma$ outside a spanning tree. 

Prescribed components of level sets are an additional ingredient to the realization theorems for Reeb graphs. The classical result of V. Sharko \cite{Sharko} provides a realization of any graph with the so-called good orientation as the Reeb graph of a function on a surface. Recently, the second author \cite{Michalak-TMNA,Michalak-DCG} resolved the realization problem with an arbitrary fixed closed manifold. In the case of surfaces the realization is done up to isomorphism of graphs with detailed description of Reeb graphs of Morse functions. For higher-dimensional manifolds it is up to homeomorphism and the construction relies on combinatorial modifications of Reeb graphs. It was showed that any graph $\Gamma$ with good orientation is obtained from the initial graph by using a finite sequence of combinatorial modifications. In this work, we extend these results to the situation, when the manifold $W$ has a boundary and one can prescribe connected components of level sets of the function corresponding to edges of the graph outside a spanning tree.

The principal significance of Theorem \ref{theorem:realization_of_graph_for_surfaces_with_prescribed_level_sets} is that it allows one to represent any epimorphism $\varphi\colon \pi_1(M)\to \pi_1(\Gamma)$ as the Reeb epimorphism of a Morse function whose Reeb graph is homeomorphic to $\Gamma$ (Corollary \ref{corollary:Reeb_epi_for_closed_manifolds}). Theorems \ref{theorem:Reeb_epi_iff_regular_and_independent} and \ref{theorem:realization_of_Reeb_epi} provide an answer to the initial question for a manifold $W$ with boundary. An epimorphism $\pi_1(W)\to \pi_1(\Gamma)$ is represented as the Reeb epimorphism if and only if it is induced by a system of hypersurfaces without boundary.
Equivalently, it is factorized through $\pi_1(W)/\langle\pi_1(\partial W)\rangle^{\pi_1(W)}$, where $\langle\pi_1(\partial W)\rangle^{\pi_1(W)}$ is the smallest normal subgroup of $\pi_1(W)$ containing the classes of all loops from $\partial W$.

Note that the problem of representability of an epimorphism as the Reeb epimorphism was also considered independently by O. Saeki \cite{Saeki_Reeb_spaces}; for a finite graph $\Gamma$ without loops and a closed manifold he constructs a smooth function with finitely many critical values such that its Reeb graph is isomorphic to $\Gamma$ and under this identification its Reeb epimorphism is $\varphi$. Thus Saeki realizes not only the topological structure of $\Gamma$, but also the combinatorial one, at the cost of losing the non-degeneracy of critical points. Note that the number of vertices of degree $2$ in the Reeb graph of a Morse function cannot be arbitrary (see, for instance \cite[Theorem 5.6]{Michalak-TMNA}), and thus we focus on the homeomorphism type. Our results are also different in that way they deal with manifolds with boundary and allow us to control the system of hypersurfaces in connected components of level sets of the constructed function.

Another subject of this paper are the maximum values of some related quantities. The corank of a finitely generated group $G$
is the maximum rank $r$ for which there exists an epimorphism $G\to F_r$. As defined in \cite{Michalak-TMNA} for closed manifolds,
the Reeb number $\reeb(W)$ of $W$ is the maximum cycle rank of Reeb graphs of Morse functions $f\colon W\to \R$ which are constant on each connected component of $\partial W$.
In other words, $\reeb(W)$ is the maximum rank of the Reeb epimorphism of such a Morse function on $W$. For closed manifolds we have the equality $\reeb(M) = \corank (\pi_1(M))$ (see \cite{Michalak-DCG} and~\cite{Gelbukh:filomat}).
In Theorem \ref{theorem:Reeb_epi_iff_regular_and_independent} we establish  the corresponding formula for manifolds with boundary:
$$
\reeb(W) = \corank\left(\pi_1(W)/\langle\pi_1(\partial W)\rangle^{\pi_1(W)}\right)\,.
$$
It is also equal to the maximum size of an independent system of hypersurfaces without boundary in $W$. The last quantity we consider is the maximum size of an independent system of hypersurfaces in $W$, which was denoted by $C(W)$ by O. Cornea in~\cite{Cornea}. It is always equal to the corank of $\pi_1(W)$.

Relations between these numbers have already been studied by other authors. The equality $C(W) =\corank(\pi_1(W))$ was established by O. Cornea \cite{Cornea} for closed smooth manifolds and by W.~Jaco~\cite{Jaco} for combinatorial manifolds with boundary.  The equality $R(M)=\corank(\pi_1(M))$ was proved by the second author \cite{Michalak-DCG} and independently by I.~Gelbukh \cite{Gelbukh:DCG} for orientable manifolds by using foliation theory and later in \cite{Gelbukh:filomat} without the orientability assumption by other methods. It is worth emphasizing that, while these papers contain geometric descriptions of the corank of $\pi_1(M)$, no~correspondence between epimorphisms, systems of hypersurfaces and Reeb graphs was given. This work fills this gap. \looseness=-1

The paper is organized as follows. 
In Section \ref{section:systems_of_hypersurfaces} we describe the correspondence  between systems of hypersurfaces and homomorphisms onto free groups.
Next, in Section \ref{section:cobordism_of_systems_and_classification_of_epimorphism} we deal with the problem of classification of epimorphisms onto free groups up to equivalence and strong equivalences.
Section \ref{section:reeb_epimorphisms} establishes the
representation of epimorphisms onto free groups as the Reeb epimorphisms of Morse functions. The rest of the section is devoted to its some applications containing description of the corank and Reeb number, the problem of extendability of independent systems and connections with topological conjugacy of functions. \looseness=-1


\section{Systems of hypersurfaces and induced homomorphisms}\label{section:systems_of_hypersurfaces}

Throughout the paper we assume that all manifolds are
smooth of dimension $n \geq 2$. 
Hereafter, $M$ and $W$ are connected and compact smooth manifolds with fixed basepoints and $M$ is closed, unless otherwise stated.

We  use  the following model  of $F_r$, the free group on $r$
generators. Consider the circle $\es^1$ as the quotient
$[-1,1]/\{-1,1\}$ and take $F_r := \pi_1(\bigvee_{i=1}^r \es^1_i)$
as the fundamental group of the wedge product of $r \geq 1$
copies of the circle. We use the convention that $\bigvee^0_{i=1}
\es^1_i = {\rm pt}$, thus  $F_0 = 1$ is the trivial group. 

We will omit basepoints from the notation.

\subsection{Systems of hypersurfaces}

Let $W$ be a compact manifold. A submanifold $N$ of $W$ is called \emph{proper} if $N \cap \partial W = \partial N$.
A \emph{framing} of a submanifold $N$ in $W$ is a smooth function $\nu$ which assigns to each $x\in N$ a basis of the normal bundle of $N$ at the point $x$. The pair $(N,\nu)$ is called a \emph{framed submanifold}.
If $N$ is of codimension~1, then its framing is just a nonzero section of the normal bundle of $N$. Thus $N$ has a closed product neighbourhood $P(N) \cong N \times [-1,1]$ and it is called \emph{$2$-sided}. We assume that $P(N)$ is compatible with the framing. Denote by $P_t(N)$ the submanifold corresponding to $N \times \{t\}$. The positive side of $N$ containing $P_t(N)$ for $t\in(0,1]$ agrees with the side determined by the framing.

A \emph{system of hypersurfaces} in $W$ is a tuple $\mathcal{N}=(N_1,\ldots,N_r)$ of disjoint, proper, $2$-sided submanifolds $N_i$ together with their framings $\nu_i$.
The number $r$ is called the \emph{size} of     the system $\mathcal{N}$.
Denote by
$$
W | \N := W \setminus \bigcup_{i=1}^r \int P(N_i)
$$
the complement of the system $\N$ for sufficiently small product neighbourhoods of $N_i$'s. It will cause no confusion if we use $\N$ to designate also $\bigcup_{i=1}^r N_i$, the sum of all submanifolds from the system. Of course, framings $\nu_i$ of submanifolds $N_i$ form a framing $\nu$ of $\N$ such that $\nu|_{N_i} = \nu_i$. Unless it is necessary, we will not write a framing of
a system explicitly.

A system $\mathcal{N}$ is called \emph{independent} if $W | \N$ is connected, and it is called \emph{regular} if each $N_i$ is connected. The system $\mathcal{N}$ is \emph{without boundary} if
$\partial \N = \varnothing$. Note that we do not assume that submanifolds $N_i$ are connected, unless $\N$ is regular.

Now we define the extended Pontryagin--Thom construction for a system of hypersurfaces.

\begin{definition}\label{definition:induced by system of hyperspaces}
	The \emph{homomorphism} $\varphi_\N \colon \pi_1(W) \to F_r$ \emph{induced by a system} $\mathcal{N} =(N_1,\ldots, N_r)$ omitting the basepoint is defined as follows. 
	Fix product neighbourhoods $P(N_i)\cong N_i \times [-1,1]$ which are disjoint.
	We define the map $f_\N \colon W \to \bigvee_{i=1}^r \es^1_i$ which maps $W |\N$ to the basepoint and each $P(N_i)$ onto $i$-th circle $\es^1_i = [-1,1]/\{-1,1\}$ by mapping $P_t(N_i)$ into $t$. It is clear that $f_\N$ is continuous, thus we put $\varphi_\N := (f_\N)_\#$ to be the homomorphism induced by $f_\N$ on fundamental groups.
\end{definition}

By the definition of a system of hypersurfaces $\varphi_\N$ is well-defined and it is clear that it does not depend on the choice of $P(N_i)$'s and a given framing, but on the orientation
of the normal bundle of $\N$.

\begin{proposition} \label{proposition:homomorphisms_are_induced_by_systems}
	Any homomorphism $\varphi \colon \pi_1(W) \to F_r$ is induced by a system of hypersurfaces.  If a system $\mathcal{N}$ is independent, then $\varphi_\N$ is an epimorphism.
\end{proposition}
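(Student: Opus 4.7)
The plan is to run an extended Pontryagin--Thom construction, exploiting the fact that $\bigvee_{i=1}^r \es^1_i$ is a $K(F_r,1)$.

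For the first statement, given $\varphi \colon \pi_1(W) \to F_r$, standard obstruction theory (or a direct cellular construction: send $1$-cells to their prescribed images in $F_r$ and extend over higher cells by asphericity of the target) yields a basepoint-preserving continuous map $f \colon W \to \bigvee_{i=1}^r \es^1_i$ with $f_\# = \varphi$. I would then smooth $f$ near the preimages of the regular values $p_i := [0] \in [-1,1]/\{-1,1\} = \es^1_i$ and make it transverse to each $p_i$, including on $\partial W$. Each $N_i := f^{-1}(p_i)$ is then a proper codimension-one submanifold of $W$; pulling back the positive tangent direction at $p_i$ supplies a framing $\nu_i$ of $N_i$, and pairwise disjointness is automatic since the $p_i$ lie in distinct circles. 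To confirm that $\varphi_\N = \varphi$ for the resulting system $\N := (N_1,\ldots,N_r)$, I would choose product neighborhoods $P(N_i) \cong N_i \times [-1,1]$ on which $f$ coincides with the obvious projection to $\es^1_i$; then $f$ carries $W|\N$ into $\bigvee_{i=1}^r \es^1_i \setminus \{p_1,\ldots,p_r\}$, which deformation retracts onto the wedge point. Composing $f$ with this retraction on $W|\N$ (keeping $f$ fixed on $\bigcup_i P(N_i)$) produces a basepoint-preserving homotopy from $f$ to the canonical map $f_\N$ of Definition \ref{definition:induced by system of hyperspaces}, whence $\varphi = f_\# = (f_\N)_\# = \varphi_\N$.

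For the second statement, assume $\N$ is independent, i.e.\ $W|\N$ is connected. For each standard generator $e_i \in F_r$ I would construct a loop $\gamma_i$ based at the basepoint as follows: pick any $y \in N_i$, join the basepoint to $x_i^\pm := (y,\pm 1) \in P_{\pm 1}(N_i) \subset W|\N$ by paths inside $W|\N$ (possible by connectedness), and close the loop by traversing the fiber $\{y\} \times [-1,1] \subset P(N_i)$ from $x_i^-$ to $x_i^+$. Under $f_\N$ the two segments in $W|\N$ collapse to the basepoint while the fiber maps via $t \mapsto [t]$ once around $\es^1_i$, so $f_\N(\gamma_i)$ represents $e_i$. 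Since the $e_i$ generate $F_r$, we conclude that $\varphi_\N$ is surjective. The main obstacle in this plan is the homotopy step from $f$ to $f_\N$ (ensuring that the original map really can be normalized to the combinatorial model of Definition~\ref{definition:induced by system of hyperspaces}); the remainder is routine transversality and path-connectedness.
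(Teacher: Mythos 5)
Your proposal is correct and follows essentially the same route as the paper: obtain a map $f\colon W\to\bigvee_{i=1}^r\es^1_i$ inducing $\varphi$ from the $K(F_r,1)$ property, take preimages of regular values (transverse also on $\partial W$) as the system, and normalize $f$ to the combinatorial model $f_\N$ — your deformation-retraction homotopy is exactly the paper's post-composition with the map $h$ that stretches the interval of regular values over the whole circle and collapses the rest to the wedge point. The surjectivity argument via a loop that crosses one fiber of $P(N_i)$ and closes up in the connected complement $W|\N$ is likewise the paper's argument, just spelled out more explicitly.
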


\begin{proof}
	Since $\bigvee_{i=1}^r \es^1_i$ is an Eilenberg--MacLane space K($F_r$,1), there is a map $f\colon W \to \bigvee_{i=1}^r \es^1_i$ such that $f_\# = \varphi$.
	Smooth it outside the inverse image of basepoint and take regular values $a_i \in \es^1_i$ of both $f$ and $f|_{\partial W}$. Since $W$ is compact, there is a neighbourhood
	$[a_i -\varepsilon, a_i + \varepsilon]$ consisting of regular values, and thus $N_i := f^{-1}(a_i)$ is a $2$-sided, proper submanifold with product neighbourhood
	$f^{-1}([a_i -\varepsilon, a_i + \varepsilon]) \cong N_i \times [a_i -\varepsilon, a_i + \varepsilon]$. Take the map $h \colon \bigvee_{i=1}^r \es^1_i \to \bigvee_{i=1}^r \es^1_i$ which
	contracts $$\bigvee_{i=1}^r \es^1_i \setminus \bigcup_{i=1}^r [a_i -\varepsilon, a_i + \varepsilon]$$ to the basepoint and maps linearly $[a_i -\varepsilon, a_i + \varepsilon]$ onto $\es^1_i$, preserving orientation. It is clear that $(h\circ f)_\# = \varphi$ is induced by $\N = (N_1,\ldots,N_r)$ with framings compatible with the orientations
	of $[a_i -\varepsilon, a_i + \varepsilon]$.

	If $\N$ is independent, then for any $i$ there is a loop $\alpha_i$ in $(W | \N) \cup P(N_i)$ such that $f_\N \circ \alpha_i$ represents the generator of $\pi_1(\bigvee \es^1_i)$ corresponding to
	$\es^1_i$. Thus $\varphi_\N$ is surjective.
\end{proof}

There is a quite easy characterization, using a special notion of framed cobordism, of systems in a~closed manifold $M$ which induce the same homomorphism to a free group.

Recall (cf. \cite{Milnor_viewpoint}) that submanifolds $N$ and $N'$ in $M$ are \emph{cobordant} if there exists a proper compact submanifold  $W \subset M \times [0,1]$, called \emph{cobordism} between $N$ and $N'$,
such that $W \cap \left(M \times [0,\varepsilon]\right)=  N \times  [0,\varepsilon]$ and $W \cap \left(M \times [1-\varepsilon,1]\right) = N' \times [1-\varepsilon,1]$.
Framed submanifolds $(N,\nu)$ and $(N',\nu')$ are \emph{framed cobordant}, if there is a cobordism $W\subset M\times [0,1]$ between $N$ and $N'$ with a framing $\vartheta$ such
that $\vartheta(x,t) = (\nu(x),0)$ for $(x,t) \in N \times  [0,\varepsilon]$ and $\vartheta(x,t) = (\nu'(x),0)$ for $(x,t) \in N' \times  [1-\varepsilon,1]$.

\begin{definition} \label{def:framed_cobordism_of_systems}
	Let $\mathcal{N} = (N_1,\ldots,N_r)$ and
	$\N'=(N'_1,\ldots,N'_r)$ be two systems in $M$ of the same size~$r$. We say that $\N$ and $\N'$ are \emph{framed cobordant} (as systems of hypersurfaces) if there are $r$ disjoint framed
	cobordisms $W_i \subset M \times[0,1]$ between $N_i$ and $N'_i$.
	
	In other words, the systems $\mathcal{N}$ and $\mathcal{N}'$ are framed
	cobordant, if framed submanifolds $\N$ and $\N'$
	are framed cobordant by the cobordism $W$ which has a partition into $r$
	parts $W = W_1 \sqcup \ldots \sqcup W_r$ such that $\partial W_i = N_i \times \{0\} \sqcup
	N'_i \times \{1\}$. Clearly, it is an equivalence relation in the
	family of systems of hypersurfaces in $M$ of size $r$. Note that the cobordisms $W_i$ form the system $\mathcal{W}=(W_1,\ldots,W_r)$ of hypersurfaces in $M\times[0,1]$.
\end{definition}

Note that the notion of framed cobordism between systems of hypersurfaces of size 1 is the same as an ordinary framed cobordism.

\begin{proposition}\label{proposition:framed_cobordant_systems=the_same_induced_epimorphisms}
	Systems $\mathcal{N}$ and $\mathcal{N}'$ of hypersurfaces in $M$ are
	framed cobordant if and only if $\varphi_\N = \varphi_{\N'}$.
\end{proposition}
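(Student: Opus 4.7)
The plan is to reduce both implications to the Pontryagin--Thom dictionary given by Definition~\ref{definition:induced by system of hyperspaces}: a system $\N$ corresponds to the map $f_\N\colon M\to\bigvee_{i=1}^r\es^1_i$, and because $\bigvee\es^1_i$ is a $K(F_r,1)$ the homomorphism $\varphi_\N=(f_\N)_\#$ determines $f_\N$ up to homotopy. Framed cobordism of systems will translate into homotopy of these maps, with the partition of the cobordism into the pieces $W_i$ controlling which $\es^1_i$ is hit. The whole argument is then the same dictionary applied on the product $M\times[0,1]$.

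For ($\Rightarrow$), given a system $\mathcal{W}=(W_1,\ldots,W_r)$ in $M\times[0,1]$ realising the framed cobordism between $\N$ and $\N'$, I would apply Definition~\ref{definition:induced by system of hyperspaces} directly to $\mathcal{W}$, choosing product neighbourhoods $P(W_i)\subset M\times[0,1]$ that agree with $P(N_i)\times[0,\varepsilon]$ near $t=0$ and with $P(N'_i)\times[1-\varepsilon,1]$ near $t=1$; this is possible by the product form of the cobordism at its ends and the compatibility of its framing with those of $\N$ and $\N'$. The resulting map $f_\mathcal{W}\colon M\times[0,1]\to\bigvee\es^1_i$ then restricts to $f_\N$ on $M\times\{0\}$ and to $f_{\N'}$ on $M\times\{1\}$, giving a homotopy and hence $\varphi_\N=\varphi_{\N'}$. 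For ($\Leftarrow$), asphericity of $\bigvee\es^1_i$ produces a homotopy $F\colon M\times[0,1]\to\bigvee\es^1_i$ between $f_\N$ and $f_{\N'}$, which I would reparametrize in the $t$-coordinate so as to be independent of $t$ on the collars $M\times[0,\varepsilon]$ and $M\times[1-\varepsilon,1]$. Smoothing $F$ off the wedge point and repeating the construction from the proof of Proposition~\ref{proposition:homomorphisms_are_induced_by_systems}, I would take regular values $a_i\in\es^1_i$ equal to the midpoints used when defining $f_\N$ and set $W_i:=F^{-1}(a_i)$ with framing pulled back from $\es^1_i$. The collar condition on $F$ then forces $W_i\cap(M\times[0,\varepsilon])=N_i\times[0,\varepsilon]$ with the correct framing, and similarly at $t=1$, so $(W_1,\ldots,W_r)$ is the required framed cobordism of systems.

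The main obstacle I expect is the rigidity of Definition~\ref{def:framed_cobordism_of_systems}: the naive Pontryagin--Thom bijection only yields a framed cobordism of $\N\sqcup\N'$ as a single framed submanifold, whereas one has to ensure that the cobordism respects the labelling by $i$ and that its restrictions at the two ends are exactly $N_i$ and $N'_i$ with their prescribed framings, not merely framed cobordant copies of them. The collar reparametrization of $F$ in the converse direction, and the compatible choice of product neighbourhoods of $\mathcal{W}$ in the forward direction, are designed precisely to handle this point, upgrading the Pontryagin--Thom equivalence to the bijection asserted by the proposition.
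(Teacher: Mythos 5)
Your proof is correct and takes essentially the same route as the paper's: both directions apply the Pontryagin--Thom dictionary to the cylinder $M\times[0,1]$, turning the partitioned cobordism into a homotopy of the maps $f_\N$, $f_{\N'}$ and conversely slicing a (smoothed) homotopy at regular values $a_i\in\es^1_i$ to recover the $W_i$. The only difference is cosmetic: where you reparametrize the homotopy to be a product on the collars so that the resulting cobordism ends exactly at $N_i$ and $N'_i$ with their given framings, the paper takes arbitrary regular values and instead invokes transitivity of framed cobordism to identify the two ends with $\N$ and $\N'$.
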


\begin{proof}
	If $\N$ and $\N'$ are framed cobordant by framed cobordisms $W_1,\ldots,W_r$ which form the system $\mathcal{W}$, then as in Definition \ref{definition:induced by system of hyperspaces} it leads to the map $f_\mathcal{W} \colon M\times[0,1] \to \bigvee_{i=1}^r \es^1_i$ for a fixed product neighbourhood
	$P(\mathcal{W})$. It is clear that $f_{\mathcal{W}} |_{M\times \{0\}} = f_\N$ and $f_{\mathcal{W}} |_{M\times \{1\}} = f_{\N'}$ for product neighbourhoods $P(\N) = P(\mathcal{W}) \cap M\times \{0\}$
	and $P(\N') = P(\mathcal{W}) \cap M\times \{1\}$, respectively. Thus $f_{\mathcal{W}}$ is a homotopy between $f_\N$ and $f_{\N'}$, so $\varphi_\N = \varphi_{\N'}$.
	
	Conversely, if $\varphi_\N = \varphi_{\N'}$, then $f_\N $ and $f_{\N'}$ are homotopic by a map $f \colon M \times [0,1] \to \bigvee_{i=1}^r \es^1_i$ which is smooth outside the preimage of basepoint since $\bigvee_{i=1}^r \es^1_i$ is
	an Eilenberg--MacLane space K($F_r$,1).  As in the proof of Proposition \ref{proposition:homomorphisms_are_induced_by_systems} take regular values $a_i \in \es^1_i$ and framed submanifolds
	$W_i = f^{-1}(a_i)$ which form a~system of hypersurfaces in $M\times[0,1]$. They are framed cobordisms between $f_\N^{-1}(a_i) \cong N_i$ and $f_{\N'}^{-1}(a_i) \cong N'_i$. By the construction of
	$f_\N$ and $f_{\N'}$ it is clear that the system $(f_\N^{-1}(a_1),\ldots,f_\N^{-1}(a_r))$ is framed cobordant to~$\N$ and $(f_{\N'}^{-1}(a_1),\ldots,f_{\N'}^{-1}(a_r))$ is framed cobordant to~$\N'$.
	The statement follows by transitivity of framed cobordism.
\end{proof}

\begin{remark}
	It is easy to check that if two systems of hypersurfaces differ only in their framings, but the determined positive sides are the same, then they are framed cobordant. Thus the induced homomorphism depends only on the choice of sides of submanifolds from a system, not on particular framings.
\end{remark}

\subsection{Epimorphisms and independency of inducing systems}

The aim of this section is to prove that any epimorphism onto a free group is induced by an independent and regular system.

Let $\mathcal{N}=(N_1,\ldots,N_r)$ be a system of hypersurfaces in a compact and connected manifold $W$. Note that any class of loops in $W$ can be represented by a loop in the interior $\int W$.

\begin{lemma}\label{lemma:cutting_loops_into_arcs}
	Any class of loops $\omega \in \pi_1(W)$ either can be represented by a loop in $W|\N$ or there is a loop $\alpha \in \omega$ which can be written as the concatenation of
	paths $\alpha_{1} \cdot \ldots \cdot \alpha_{k}$ whose ends lie in $W | \N$ and $\alpha_i \cap P_t(\N)$ is a single point for any $t\in[-1,1]$.
	Thus putting $a_i := [\es^1_i]$ as the generators of $F_r = \pi_1\left(\bigvee_{i=1}^r \es^1_i\right)$ we have $\varphi_\N(\omega) = a^{\epsilon_1}_{i_1}\ldots a^{\epsilon_k}_{i_k}$,
	where $\epsilon_j \in \{-1,+1\}$ and $i_j$ is a unique index for which $\alpha_j \cap N_{i_j}$ is non-empty.
\end{lemma}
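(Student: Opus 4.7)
The plan is to choose a smooth loop $\alpha$ representing $\omega$, put it in general position with respect to $\N$, and then use the product structure of the neighbourhoods $P(N_{i})$ to straighten each crossing into a monotone ``vertical'' segment.

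First I would represent $\omega$ by a smooth loop $\alpha\colon [0,1]\to\int W$ and apply the transversality theorem to perturb $\alpha$ (within its homotopy class) so that it is transverse to every $N_i$. Then $\alpha\cap\N$ is a finite set $\{p_1,\ldots,p_k\}$ ordered along $\alpha$, with $p_j\in N_{i_j}$. If $k=0$, the loop is compact and disjoint from $\N$, so after shrinking the product neighbourhoods (or pushing $\alpha$ out of the $P(N_i)$'s via the retraction onto $\partial P(N_i)$) we may assume $\alpha\subset W|\N$; this gives the first alternative of the lemma.

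For $k>0$, for each $j$ let $\beta_j$ be the maximal connected sub-arc of $\alpha$ containing $p_j$ and lying in $P(N_{i_j})$. Transversality forces its endpoints to lie on opposite components of $\partial P(N_{i_j})=N_{i_j}\times\{-1,+1\}$. Using the product structure $P(N_{i_j})\cong N_{i_j}\times[-1,1]$, I would homotope $\beta_j$ rel endpoints so that its $[-1,1]$-coordinate is strictly monotone; a convex-linear homotopy on the $t$-coordinate, leaving the $N_{i_j}$-coordinate free, does the job since $[-1,1]$ is convex. Afterwards $\beta_j$ meets each slice $P_t(N_{i_j})$ in exactly one point. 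Next I would eliminate every ``dip'' of $\alpha$ into some $P(N_i)$ that does not meet $N_i$: each such sub-arc has both endpoints on the same component $N_i\times\{\pm 1\}$ and can be pushed onto it by the retraction of the half-tube $N_i\times[0,1]$ or $N_i\times[-1,0]$ onto its base. A small generic perturbation of the remaining $W|\N$-portion then ensures it meets $\partial P(N_i)$ only at the entry/exit points of the $\beta_j$'s.

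Finally I pick division points $b_0=b_k,b_1,\ldots,b_{k-1}$ in the $W|\N$-portions of $\alpha$ separating consecutive $\beta_j$'s, and set $\alpha_j:=\alpha|_{[b_{j-1},b_j]}$. By construction each $\alpha_j$ has endpoints in $W|\N$, contains exactly one $\beta_j$, and meets each slice $P_t(\N)$ in the single point of $\beta_j$ at level $t$ — which is the geometric part of the statement. For the word formula, the definition of $f_\N$ sends $W|\N$ to the wedge-point and sends $\beta_j$ monotonically once around $\es^1_{i_j}$, so $f_\N\circ\alpha_j$ represents $a_{i_j}^{\epsilon_j}$, where $\epsilon_j\in\{+1,-1\}$ records whether $\beta_j$ crosses $N_{i_j}$ from its negative to its positive framed side or vice versa. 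Concatenation then yields $\varphi_\N(\omega)=a_{i_1}^{\epsilon_1}\cdots a_{i_k}^{\epsilon_k}$. The main technical nuisance is the elimination of ``dips'': plain transversality removes tangencies with $N_i$ but not spurious excursions into $P(N_i)$, and the corresponding extra homotopies have to be arranged so as not to introduce new crossings with $\N$ or alter the homotopy class of $\alpha$.
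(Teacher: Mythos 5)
Your proposal follows the same route as the paper's (very terse) proof: homotope a representative of $\omega$ into general position with respect to $\N$, so that the intersection with $\N$ is a finite set, and cut the loop into arcs with endpoints in $W|\N$; the paper leaves all straightening details implicit, and your elaboration of the monotonization inside the tubes and the elimination of ``dips'' is the right way to fill them in. One intermediate claim is inaccurate, however: transversality does \emph{not} force the endpoints of a maximal excursion $\beta_j$ of $\alpha$ into $P(N_{i_j})$ to lie on opposite components of $\partial P(N_{i_j})$. The loop may cross $N_{i_j}$ transversally several times during a single excursion into the tube — e.g.\ enter at $N_{i_j}\times\{+1\}$, cross, cross back, and exit at $N_{i_j}\times\{+1\}$ — in which case $\beta_j=\beta_{j+1}$, the endpoints lie on the same component, there is no point of $W|\N$ between $p_j$ and $p_{j+1}$ at which to cut, and your convex-linear straightening of the $t$-coordinate would collapse the pair of crossings. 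This does not endanger the conclusion (the lemma does not require the resulting word to be reduced, so dropping a cancelling pair is acceptable; alternatively, since $W|\N$ is defined only for sufficiently small product neighbourhoods, one may shrink the $P(N_i)$ after fixing the transverse representative so that each component of $\alpha\cap P(N_i)$ contains exactly one crossing and is already monotone in $t$), but as written the step fails in this configuration and needs one of these repairs before the division points $b_0,\ldots,b_{k-1}$ can be chosen.
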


\begin{proof}
	Take any loop in $\omega$ and homotope it to be in general position to $\N$. Since they have complementary dimensions, their intersection is a finite set. Now, cut the obtained loop into paths $\alpha_i$ as it is required.
\end{proof}

\begin{lemma}\label{lemma:connected_sum_of_submanifolds}
	Suppose there is a path $\gamma\colon [0,1] \to W$ such that $\gamma \cap \N = \gamma \cap N_j = \{x,y\}$, where $x = \gamma(0) \in X$ and $y=\gamma(1)\in Y$ are in the different connected components $X$ and $Y$ of $N_j$, and which joins $x$ and $y$ from the same side, i.e. $\gamma \cap P_t(N_j) = \varnothing$ for any $t \in [-1,0)$ or for any $t\in(0,1]$. Then there is a system $\N' = (N'_1,\ldots,N'_r)$ such that $N_i = N'_i$ for $i\neq j$, $N'_j$ has a one less connected component than $N_j$ and $\varphi_\N = \varphi_{\N'}$.
\end{lemma}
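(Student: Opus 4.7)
The plan is to perform a surgery on $N_j$ along $\gamma$ --- geometrically, a connected sum of the components $X$ and $Y$ --- yielding a new submanifold $N'_j$ with one fewer connected component, and then to verify via Lemma~\ref{lemma:cutting_loops_into_arcs} that the induced homomorphism is unchanged. Using the tubular neighborhood theorem applied to $\gamma$ (pushed slightly into the interior of $W$ if necessary) together with the compactness of the $N_i$'s, one obtains an embedded tube $T \cong D^{n-1} \times [0,1]$ with core $\gamma$ such that
$$T \cap N_j = D_x \sqcup D_y, \qquad T \cap N_i = \varnothing \ \text{ for } i \neq j,$$
where $D_x = D^{n-1} \times \{0\}$ and $D_y = D^{n-1} \times \{1\}$ are closed disk neighbourhoods of $x \in X$ and $y \in Y$, and $T \setminus (D_x \sqcup D_y)$ lies on the positive side of $N_j$ (the same-side hypothesis picks out this side, up to relabelling).

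I would then define
$$N'_j := \bigl(N_j \setminus \operatorname{int}(D_x \sqcup D_y)\bigr) \cup \bigl(\partial D^{n-1} \times [0,1]\bigr),$$
smoothed along the two attaching spheres $\partial D_x$ and $\partial D_y$, and set $\N' := (N_1,\dots, N_{j-1}, N'_j, N_{j+1}, \dots, N_r)$. Then I would check that $\N'$ is a system with the asserted properties: disjointness of its terms is immediate from the choice of $T$; the submanifold $N'_j$ is proper and $2$-sided, since outside $T$ it coincides with $N_j$ and inherits its product neighbourhood and framing, while on the lateral cylinder $\partial D^{n-1} \times [0,1]$ the framing is chosen to point outward from the tube, which is consistent with the framing of $N_j$ across the circles $\partial D_x$ and $\partial D_y$. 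The cylinder joins $X$ and $Y$ into a single connected component and leaves the other components of $N_j$ unchanged, so $N'_j$ has exactly one fewer connected component than $N_j$.

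For the equality $\varphi_\N = \varphi_{\N'}$ I would apply Lemma~\ref{lemma:cutting_loops_into_arcs}. Since $T$ is a contractible compact submanifold of $W$ with collar (a ball), the inclusion $W \setminus T \hookrightarrow W$ is $\pi_1$-surjective, so any class $\omega \in \pi_1(W)$ is represented by a loop $\alpha \subset W \setminus T$ in general position with respect to $\N \cup \N'$. Outside $T$ the sets $N_j$ and $N'_j$ coincide with identical framings (the surgery is supported inside $T$), so $\alpha$ meets $\N$ and $\N'$ in the same ordered sequence of transverse intersection points with the same crossing signs. Lemma~\ref{lemma:cutting_loops_into_arcs} then produces identical words, i.e.\ $\varphi_\N(\omega) = \varphi_{\N'}(\omega)$, which is the desired equality.

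The main technical step is the first one: producing a tube $T$ that meets $N_j$ in precisely the two prescribed disks, stays on the correct side, and is disjoint from every other $N_i$. This is where the full strength of the hypotheses on $\gamma$ is needed --- disjointness of $\gamma$ from $\N \setminus N_j$ and the one-sidedness at $x$ and $y$ --- via the tubular neighborhood theorem combined with compactness; once such a $T$ is in place, both the surgery and the verification that the induced homomorphism is preserved are essentially formal.
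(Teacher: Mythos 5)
Your construction of $N'_j$ is the same as the paper's: an embedded tube over $\gamma$ meeting $\N$ only in two disks $D_x\subset X$ and $D_y\subset Y$, followed by the tube surgery, with the same-side hypothesis being exactly what lets the framings of $X$ and $Y$ extend to the "outward" framing of the lateral cylinder. Where you genuinely diverge is the verification that $\varphi_\N=\varphi_{\N'}$. The paper lets a representing loop run through the tube: it only arranges that $\alpha$ avoids the two removed disks and the smoothing region, writes $\alpha$ as in Lemma~\ref{lemma:cutting_loops_into_arcs} with respect to $\N'$, and checks that every crossing of the lateral cylinder is immediately followed by another one with opposite sign, so the extra letters cancel in adjacent pairs. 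You instead use that $T$ is an $n$-ball, so $\pi_1(W\setminus T)\to\pi_1(W)$ is onto (attaching an $n$-cell, $n\geq 2$, cannot enlarge $\pi_1$), push the loop off $T$ entirely, and conclude that the two crossing words coincide letter by letter because $N_j$ and $N'_j$ agree outside $T$. Both arguments are correct and both rest on the well-definedness of $\varphi_\N$ from Definition~\ref{definition:induced by system of hyperspaces}; yours is shorter and dispenses with the cancellation bookkeeping, while the paper's explicit cancellation computation is the template reused later (e.g.\ in Lemma~\ref{lemma:changing_system_to_regular_and_the_same_orientantion_of_the_complement} and in the analysis of the operation (H3)), which is presumably why it is spelled out there. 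One small point to make explicit in your version: arrange the smoothing of $N'_j$ along $\partial D_x$ and $\partial D_y$ to be supported inside $T$ (or push the loop off a slightly larger ball), so that the claim that $N_j$ and $N'_j$ coincide outside $T$, with identical framings, is literally true.
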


\begin{proof}
	First, change $\gamma \colon [0,1] \to W$ to be an embedded arc in $\int W$ with the same properties as in the statement. Take a small, closed tubular neighbourhood $P(\gamma)$ of $\gamma$
	parametrized by $\gamma \times \de_3^{n-1}$ such that $P(\gamma) \cap \N = P(\gamma) \cap N_j$, where $\de_t^{n-1} = \{ x \in \R^{n-1} \ : \ ||x|| \leq t\}$ is a closed disc of radius $t$.
	We may assume that $P(\gamma) \cap X = \{x\} \times \de_3^{n-1}$ and $P(\gamma) \cap Y = \{y\} \times \de_3^{n-1}$.
	Now, perform the connected sum operation of $X$ and $Y$ along $\gamma$ in $W$, i.e. we define the new submanifold
	$$
	A = X \#_{\gamma} Y := \left(X \setminus \left(\{x\} \times \de_2^{n-1}\right)\right) \cup  \left(\gamma \times \partial\de_2^{n-1}\right) \cup \left(Y \setminus \left(\{y\} \times \de_2^{n-1}\right)\right).
	$$
	Obviously, $A$ is a topological manifold, smoothly embedded outside $\{x,y\} \times \partial\de_2^{n-1}$. Thus take an open $\varepsilon$-neighbourhood $U$ of
	$\{x,y\} \times \partial\de_2^{n-1}$ and smooth the corners inside $U$. Hence we may assume that $A$ is a $2$-sided smooth submanifold of $W$ with product neighbourhood $P(A)$ such that
	\begin{equation*}
		\begin{aligned}[t]
			P(A\setminus U) = P\left(X \cup Y \setminus \left( \{x,y\} \times \de_2^{n-1} \right) \setminus U\right) 
			\cup \left(\gamma([\varepsilon,1-\varepsilon]) \times (\de^{n-1}_3 \setminus \int\de^{n-1}_1)\right).
		\end{aligned}
	\end{equation*}
	Since $\gamma$ joins $X$ and $Y$ from the same side, the orientations of their normal bundles induce the orientation of $P(A)$, and thus a framing of $A$.
	
	Let $\mathcal{N}' = (N'_1,\ldots,N'_r)$ be a system such that $N_i = N'_i$ for $i\neq j$ and $N'_j = \left(N_j \setminus (X \cup Y)\right) \cup A$.  We will show that $\varphi_\N = \varphi_{\N'}$.
	Let $[\alpha] \in \pi_1(W)$ be any class of loops in $W$ with the basepoint outside $P(\N)$ and $P(\gamma)$. We may assume that $\alpha$ does not intersect $\left(\{x,y\} \times \de_2^{n-1}\right) \cup\, U$ and it is in general position to $\N'$. Write $\alpha = \alpha_{1} \cdot \ldots \cdot \alpha_{k}$ as in Lemma \ref{lemma:cutting_loops_into_arcs} with respect to the system $\N'$,
	so $\varphi_{\N'}([\alpha]) = a^{\epsilon_1}_{i_1}\ldots a^{\epsilon_k}_{i_k}$. Note that $\varphi_\N([\alpha])$ is obtained from $\varphi_{\N'}([\alpha]) = a^{\epsilon_1}_{i_1}\ldots a^{\epsilon_k}_{i_k}$
	by removing these $a^{\epsilon_j}_{i_j}$ which correspond to $\alpha_j$ such that $\alpha_j \cap \left(\gamma \times \partial\de_2^{n-1} \right)\neq \varnothing$. However, if $\alpha_j$
	intersects $\gamma \times \partial\de_2^{n-1}$ and goes inside $\gamma \times \de_2^{n-1}$ (i.e. it has the end point in $\gamma \times \de_2^{n-1}$), then $\alpha_{j+1}$
	also intersects $\gamma \times \partial\de_2^{n-1}$, since it needs to leave $\gamma \times \de_2^{n-1}$ and does not intersect $\{x,y\} \times \de_2^{n-1}$. Thus $a_{i_j} = a_{i_{j+1}}$
	and $\epsilon_{j+1} = -\epsilon_j$. Therefore $\varphi_\N([\alpha]) = \varphi_{\N'}([\alpha])$, so $\varphi_\N = \varphi_{\N'}$.
\end{proof}

\begin{figure}[h]
	\centering
	
	\begin{tikzpicture}[scale=1]
		\node at (0,0) {\includegraphics[width=215pt]{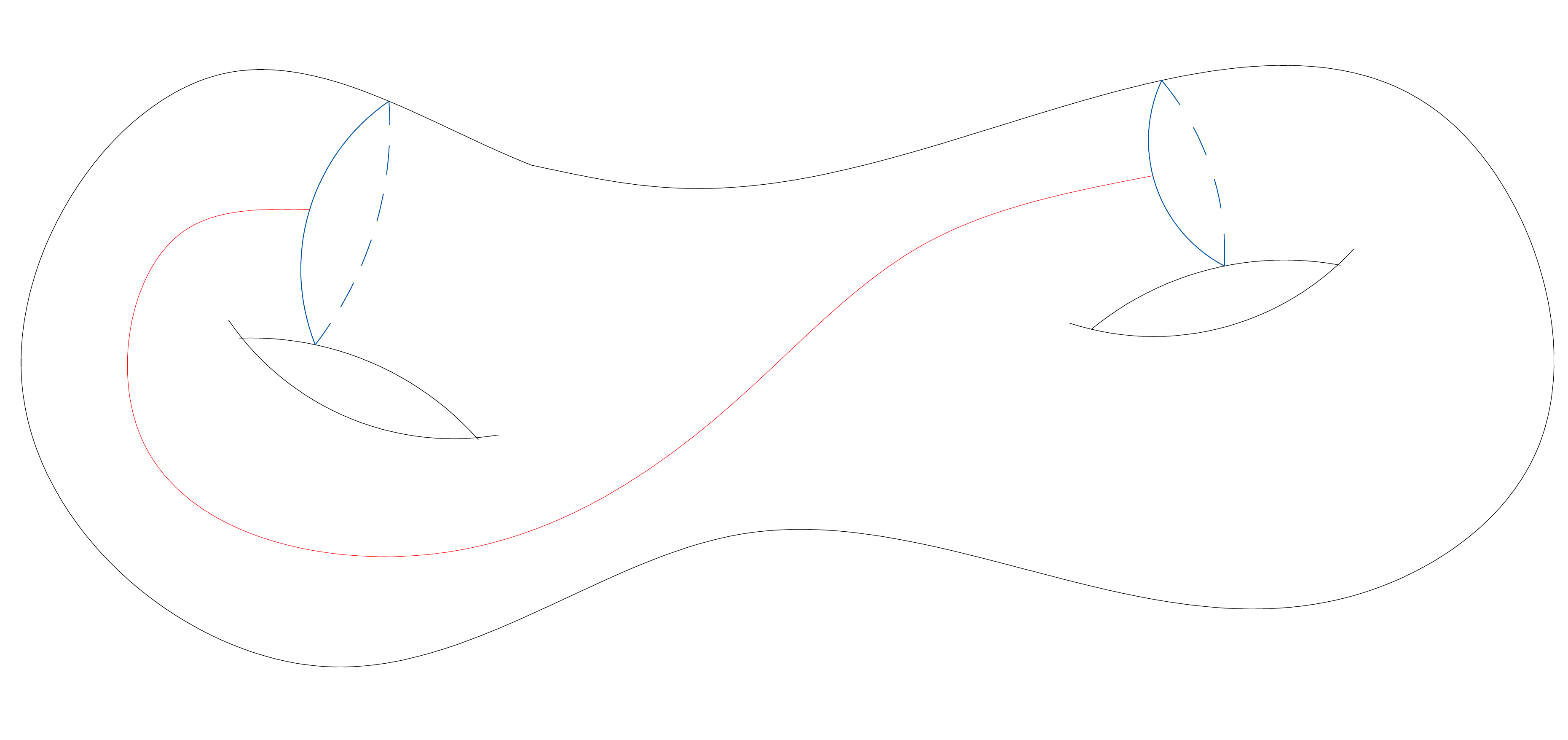}};
		
		\draw (-1.7,0.52) node {$X$};
		\draw (2.4,0.95) node {$Y$}; 
		\draw (0,-0.3) node {$\gamma$}; 
		
		\draw [->] (-2.23,0.9) to (-2.68,1.05); 
		\draw [->] (1.76,1.1) to (1.3,1);

		\draw (0,-1.8) node{(a)};
		

		\node at (8.3,0) {\includegraphics[width=215pt]{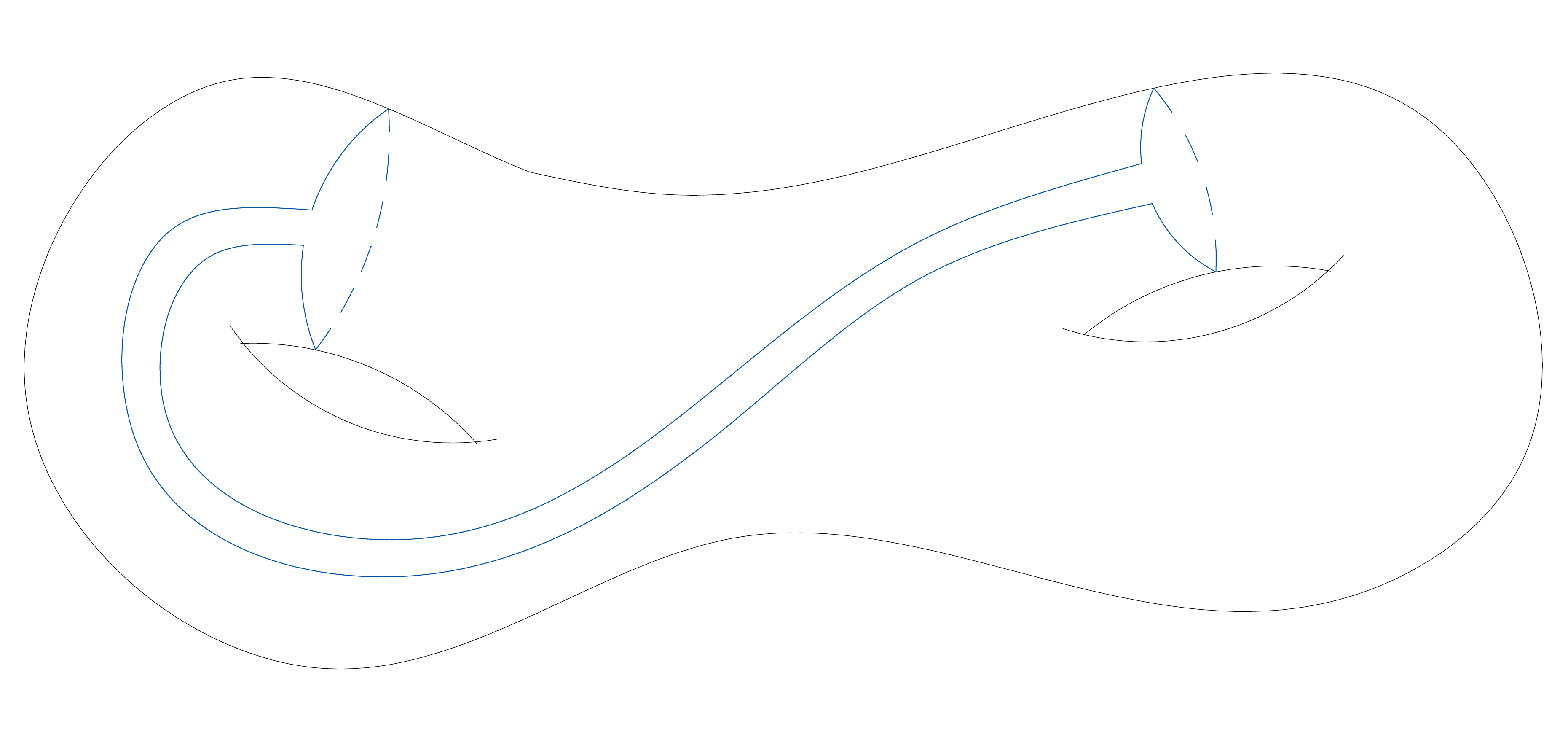}};

		\draw [->] (6.07,0.9) to (5.62,1.05); 
		\draw [->] (10.02,1.1) to (9.6,1); 
		\draw [->] (9.03,0.49) to (9.27,0.17); 
		\draw [->] (7.82,-0.2) to (7.55,0.1); 	
		
		\draw (8.85,-0.3) node {$X\#_\gamma Y$};

		\draw (8.3,-1.8) node{(b)};

	\end{tikzpicture}

	\caption{Example of connected sum operation of submanifolds $X$ and $Y$ along a curve $\gamma$ which joins them from the same side. The arrows indicate	normal vectors from the framing. }\label{figure:connected_sum}
\end{figure}

We call the constructed submanifold $X \#_\gamma Y$ the \emph{connected sum} of $X$ and $Y$ \emph{along}~$\gamma$.

\begin{proposition}\label{proposition:epi_and_regular_is_independent}
	Let $\mathcal{N}=(N_1,\ldots,N_r)$ be a system of hypersurfaces in $W$ such that $\varphi_\N$ is an epimorphism and there are no paths as in the statement of Lemma \ref{lemma:connected_sum_of_submanifolds}. Then there is a unique independent and regular system $\mathcal{A}=(A_1,\ldots,A_r)$ in $W$ such that for each $j$ the submanifold $A_j$ is a component of $N_j$ and there is a loop $\tau_j$ such that $\tau_j \cap \mathcal{N} = \tau_j \cap A_j$ is a single point. In particular, if $\N$ is regular, then it is independent.
\end{proposition}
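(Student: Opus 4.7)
The plan is to introduce a dual graph $G$ whose vertices are the connected components of $W | \N$ (with $V_0$ denoting the vertex containing the basepoint) and whose edges are the connected components of the $N_i$'s: a component $X$ of $N_i$ gives an edge labeled $i$ with endpoints $V_X^+, V_X^-$, the vertices corresponding to the positive and negative sides of $X$. The natural collapsing map $W \to G$ yields a factorization $\varphi_\N \colon \pi_1(W) \twoheadrightarrow \pi_1(G, V_0) \to F_r$, the second map sending each edge to the corresponding generator $a_i^{\pm 1}$. Under this translation the no-bad-path hypothesis for each label $j$ is equivalent to the injectivity of both maps $X \mapsto V_X^+$ and $X \mapsto V_X^-$ on the components of $N_j$. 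Its crucial consequence is that in any walk in $G$, two consecutive edges sharing a label and traversed in opposite directions must in fact coincide (forming a backtrack): the common intermediate vertex equals $V_{X_l}^{\pm} = V_{X_{l+1}}^{\pm}$ for a matching sign, so injectivity forces $X_l = X_{l+1}$.

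For existence of $A_j$, I take a loop $\omega_j$ based at $*$ with $\varphi_\N([\omega_j]) = a_j$; its image in $\pi_1(G, V_0)$ is represented by a unique reduced closed walk at $V_0$, whose word in $F_r$ must be already freely reduced (by the consequence just noted) and equal to $a_j$. Hence this reduced walk consists of a single edge labeled $j$, and being closed it is a self-loop at $V_0$; let $A_j$ denote the corresponding component of $N_j$. A loop $\alpha_j$ realizing this walk -- starting at $*$, crossing $A_j$ transversely once, and returning to $*$ inside $V_0$ -- satisfies $\alpha_j \cap \N = \alpha_j \cap A_j$ = single point. Uniqueness follows immediately: any other component $A_j'$ of $N_j$ with this property would likewise be a self-loop at $V_0$, whence $V_{A_j}^+ = V_0 = V_{A_j'}^+$, forcing $A_j = A_j'$ by the no-bad-path hypothesis.

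For independence of $\mathcal{A} = (A_1, \ldots, A_r)$, the complement $W | \mathcal{A}$ corresponds to the subgraph of $G$ obtained by deleting the edges $A_1, \ldots, A_r$; since these are self-loops at $V_0$, deletion preserves connectivity, and regularity is automatic as each $A_j$ is connected. The ``in particular'' clause is then immediate: when $\mathcal{N}$ is regular, the unique component of each $N_j$ must coincide with $A_j$, so $\mathcal{N} = \mathcal{A}$ is independent. The main technical obstacle is the combinatorial identification of free reductions in $F_r$ with backtracks in $G$, which is precisely what the no-bad-path hypothesis delivers and which makes the entire argument flow through graph-walk manipulations.
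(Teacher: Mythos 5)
Your proof is correct. It reaches the same conclusion as the paper's argument but routes it through a different organizing device: the dual graph $G$ with vertices $\pi_0(W|\mathcal{N})$ and edges the components of $\mathcal{N}$. The paper never introduces this graph here; instead it works directly with the arc decomposition of Lemma \ref{lemma:cutting_loops_into_arcs} and performs an explicit inductive loop surgery — whenever the word $a_{i_1}^{\epsilon_1}\cdots a_{i_k}^{\epsilon_k}$ admits a cancellation, the two consecutive arcs must meet the \emph{same} component $X$ of $N_{i_l}$ (else a path forbidden by Lemma \ref{lemma:connected_sum_of_submanifolds} would arise), and the two arcs are then spliced through an arc in $P_t(X)$, shortening the representation until the loop meets $\mathcal{N}$ in a single point. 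Your version encodes exactly the same dichotomy combinatorially: the no-bad-path hypothesis becomes injectivity of the two endpoint maps $X\mapsto V_X^{\pm}$, a free cancellation becomes a backtrack, and the uniqueness of reduced words in a free group replaces the induction on $k$. What your approach buys is transparency and brevity — in particular your justification of independence (the $A_j$ are self-loops at $V_0$, and deleting self-loops cannot disconnect a connected graph) is cleaner and more explicit than the paper's terse "independent by the above property of the $A_j$'s", which implicitly requires rerouting arbitrary paths around each $A_j$ using the loops $\alpha_j$. What the paper's direct geometric argument buys is that the splicing technique is reused almost verbatim in the proof of Theorem \ref{theorem:epimorphism_is_induced_by_regular_and_independent_system}, whereas your dual graph anticipates instead the Reeb-graph constructions of Section \ref{section:reeb_epimorphisms} (it is essentially the graph realized in Theorem \ref{theorem:factorization_by_Reeb_epimorphism}). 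One small caveat: your uniqueness argument, like the paper's, needs the competing loops $\alpha_j$, $\alpha_j'$ to share a basepoint (so that both components are self-loops at the \emph{same} vertex $V_0$); this is consistent with the paper's conventions but worth stating, since the proposition's wording does not make the basepoint explicit.
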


\begin{proof}
	Since $\varphi_\N$ is an epimorphism, for any $j$ there is a loop $\tau_j$ in $W$ such that $f_\N \circ \tau_j$ represents the generator of $F_r = \pi_1\left(\bigvee_{i=1}^r \es^1_i\right)$
	which corresponds to $\es^1_j$. As in Lemma \ref{lemma:cutting_loops_into_arcs} we may consider $\tau_j$ as the concatenation of paths $\alpha^j_{1},\ldots,\alpha^j_{k}$
	such that $a_j = \varphi_\N([\tau_j]) = a^{\epsilon_1}_{i_1}\ldots a^{\epsilon_k}_{i_k}$, where $a_i =[\es^1_i]$. If $k > 1$, then there is some cancellation in the word $a^{\epsilon_1}_{i_1}\ldots a^{\epsilon_k}_{i_k}$, so for some $l$ both $\alpha^j_l$ and $\alpha^j_{l+1}$ intersect the same submanifold $N_{i_l}$. If they intersect two different components of $N_{i_l}$, then it leads to a path as in the statement of Lemma~\ref{lemma:connected_sum_of_submanifolds}, a~contradiction. However, if they intersect $N_{i_l}$ in the same connected component $X$, then we may assume that the starting point of $\alpha^j_l$ and the endpoint of $\alpha^j_{l+1}$ are in $P_t(X)$ for some $t\in[-2,2]\setminus[-1,1]$ by reparameterizing $P(N_{i_l})$. Since $X$ is connected, we may replace the paths $\alpha^j_l$ and $\alpha^j_{l+1}$ in $\tau_j$ by an arc in $P_t(X)$ joining these two points, which provides 
	a loop for which the number of paths in its representation from Lemma \ref{lemma:cutting_loops_into_arcs} is reduced.
	Proceeding inductively we may assume that $\tau_j \cap \N = \tau_j \cap N_j$ is a single point.
	
	Note that if there were two components $X$ and $Y$ of $N_j$ with loops $\tau_X$ and $\tau_Y$ with the same basepoint intersecting $\N$ only in single points of $X$ and $Y$, respectively, then they would determine a path joining $X$ and $Y$ as in Lemma \ref{lemma:connected_sum_of_submanifolds}. Thus for any $j$ there is a unique connected component $A_j$ of $N_j$ with this property.
	
	The system $\mathcal{A} = (A_1,\ldots,A_r)$ is regular by definition and independent by the above property of $A_j$'s. The~uniqueness of $\mathcal{A}$ follows by the uniqueness of its components.
	
	If $\N$ is regular, then $\N=\mathcal{A}$, so it is independent.
\end{proof}

\begin{remark}
	Using the techniques as in the paper of Cornea \cite{Cornea} one can show that for a closed manifold $M$ if $\mathcal{N}$ is not regular and $\varphi_\N$ is surjective,
	then there is an independent and regular system $\mathcal{N}'=(N'_1,\ldots,N'_r)$ in $M$ such that $\N'\subset \N$, without the assumption on the existence of paths as in Lemma \ref{lemma:connected_sum_of_submanifolds}.
\end{remark}

\begin{theorem}\label{theorem:epimorphism_is_induced_by_regular_and_independent_system}
	Any epimorphism $\varphi \colon \pi_1(W) \to F_r$ is induced by a regular and independent system of hypersurfaces.
\end{theorem}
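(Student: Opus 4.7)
The plan is to start from an arbitrary inducing system, to simplify it by iterated connected sums until Proposition \ref{proposition:epi_and_regular_is_independent} becomes applicable, and then to extract a regular and independent subsystem with the same induced homomorphism. By Proposition \ref{proposition:homomorphisms_are_induced_by_systems} there exists a system $\N = (N_1,\ldots,N_r)$ of hypersurfaces in $W$ with $\varphi_\N = \varphi$; this is the starting point.

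While some $N_j$ has two distinct connected components $X, Y$ that can be joined by a same-side arc $\gamma$ as in Lemma \ref{lemma:connected_sum_of_submanifolds}, I would replace $X \sqcup Y \subset N_j$ by the connected sum $X \#_\gamma Y$. By that lemma, each such move preserves the induced homomorphism and strictly decreases the total number of connected components of $N_1 \sqcup \cdots \sqcup N_r$, a non-negative integer; hence the procedure terminates after finitely many steps. The terminal system, still denoted $\N$, satisfies $\varphi_\N = \varphi$ and, by termination, admits no paths of the type described in Lemma \ref{lemma:connected_sum_of_submanifolds}. Proposition \ref{proposition:epi_and_regular_is_independent} now applies and furnishes a unique regular and independent subsystem $\mathcal{A} = (A_1,\ldots,A_r)$ with each $A_j$ a connected component of $N_j$.

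The main obstacle is verifying that $\varphi_\mathcal{A} = \varphi_\N = \varphi$, i.e.\ that removing the extra components of each $N_j$ does not alter the induced homomorphism. The approach I would take exploits the uniqueness clause of Proposition \ref{proposition:epi_and_regular_is_independent}: for any connected component $X$ of $N_j$ distinct from $A_j$, no loop in $W$ meets $\N$ in a single point lying on $X$. Together with the absence of same-side arcs between components of $N_j$, one would aim to show that the $X$-crossings of any loop $\alpha$ in $W$ occur only in freely cancelling patterns $a_j a_j^{-1}$ inside the reduced word $\varphi_\N([\alpha]) \in F_r$, so that deleting $X$ from the system leaves the word unchanged. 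Making this cancellation argument precise, via the decomposition of loops provided by Lemma \ref{lemma:cutting_loops_into_arcs}, is the technical heart of the proof; once it is done, $\varphi_\mathcal{A}([\alpha]) = \varphi_\N([\alpha]) = \varphi([\alpha])$ for every $\alpha$, and the theorem follows.
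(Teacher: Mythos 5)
Your reduction to the terminal system and the invocation of Proposition \ref{proposition:epi_and_regular_is_independent} match the paper exactly, but the step you defer as ``the technical heart'' is precisely where the proposal fails to close. The claim that the crossings of a component $X\neq A_j$ contribute only adjacently cancelling pairs $a_j^{\epsilon}a_j^{-\epsilon}$ to the word of an \emph{arbitrary} loop cannot be extracted from Lemma \ref{lemma:cutting_loops_into_arcs} together with the absence of same-side arcs alone: the only available mechanism (two consecutive arcs $\alpha_m,\alpha_{m+1}$ meeting $N_j$ with opposite signs must meet the \emph{same} component, so they can be spliced by an arc inside $P_t(X)$) gets off the ground only when the word $a^{\epsilon_1}_{i_1}\ldots a^{\epsilon_k}_{i_k}$ actually has an adjacent cancelling pair. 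For a loop whose word is already freely reduced yet still contains letters produced by non-$A$ components, nothing in your setup supplies such a pair --- and ruling out that situation is essentially equivalent to the theorem itself; moreover, cancellations that do occur in $F_r$ may interleave $X$-letters with $A_j$-letters or with letters of other components, so one cannot argue locally that ``deleting the $X$-letters leaves the word unchanged.''

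The paper resolves this with an algebraic detour that your proposal is missing. It runs the splicing induction only on loops $\alpha$ with $\varphi_\N([\alpha])=1$: there the word reduces to the empty word, so at every stage an adjacent cancelling pair exists, both of its arcs meet the same component of some $N_j$ (by the absence of same-side arcs), and the splice drops $k$ by two while preserving both the $\varphi_\N$- and $\varphi_{\mathcal{A}}$-values. This yields only the containment $\ker\varphi_\N\subseteq\ker\varphi_{\mathcal{A}}$, whence $\varphi_{\mathcal{A}}=\eta\circ\varphi_\N$ for some epimorphism $\eta\colon F_r\to F_r$; since free groups are Hopfian, $\eta$ is an isomorphism, and since $\varphi_\N$ and $\varphi_{\mathcal{A}}$ agree on the loops $\alpha_j$ of Proposition \ref{proposition:epi_and_regular_is_independent}, whose images generate $F_r$, one concludes $\eta=\id$ and hence $\varphi_{\mathcal{A}}=\varphi_\N$ on all of $\pi_1(W)$. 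Without this Hopfian-plus-generators step, or a genuinely new argument for your stronger pointwise cancellation claim, the proof is incomplete.
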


\begin{proof}
	
	Let $\mathcal{N}=(N_1,\ldots,N_r)$ be a system inducing $\varphi = \varphi_\N$ given by Proposition \ref{proposition:homomorphisms_are_induced_by_systems}. By Lemma \ref{lemma:connected_sum_of_submanifolds} we may assume that there is no path as in the statement of the lemma. Thus by Proposition \ref{proposition:epi_and_regular_is_independent}
	we consider a regular and independent system $\mathcal{A} = (A_1,\ldots,A_r)$ such that $A_j$ is a component of $N_j$ and for each $j$ there is a loop $\tau_j$ such that $\tau_j \cap \mathcal{N} = \tau_j \cap A_j$ is a single point. Therefore $\varphi_\N([\tau_j]) = \varphi_{\mathcal{A}}([\tau_j])$ for each $j$ and $\varphi_\mathcal{A} \colon \pi_1(W) \to F_r$ is surjective. We will show that $\ker \varphi_\N \subset \ker \varphi_\mathcal{A}$.
	
	Let $[\alpha] \in \ker \varphi_\N$ and write $\alpha = \alpha_{1} \cdot \ldots \cdot \alpha_{k}$ as in Lemma \ref{lemma:cutting_loops_into_arcs} with respect to the system~$\N$.
	We proceed by induction on $k$, which is even since $\varphi_\N([\alpha]) =1$. If $k=0$, then  $\alpha \cap \N = \varnothing$, so $\alpha \in W | \N \subset W | \mathcal{A}$
	and therefore $[\alpha] \in \ker \varphi_\mathcal{A}$. Suppose that any element in $\ker\varphi_\N$ represented by a loop which can be written as the concatenation of less than $k$ paths as in
	Lemma \ref{lemma:cutting_loops_into_arcs} is also contained in $\ker\varphi_\mathcal{A}$. Let $\alpha = \alpha_{1} \cdot \ldots \cdot \alpha_{k}$ for $[\alpha] \in \ker\varphi_\N$. Since
	$1 = \varphi_\N([\alpha]) = a^{\epsilon_1}_{i_1}\ldots a^{\epsilon_k}_{i_k}$, there is an index $m$ such that $a_{i_m} = a_{i_{m+1}}$ and $\epsilon_{m+1} = - \epsilon_m$, so $i_m=i_{m+1} =: j$.
	Thus both the paths $\alpha_m$ and $\alpha_{m+1}$ intersect the same component $X$ of $N_j$ since there are no paths as in Lemma \ref{lemma:connected_sum_of_submanifolds}.
	Obviously, we may extend slightly the tubular neighbourhood of $X$ and assume that the beginning of the path $\alpha_m$ and the end of $\alpha_{m+1}$ are in $P_t(X)$ for some $t\notin[-1,1]$. Since $X$ is connected,
	so also is $P_t(X)$, there is an arc $\gamma$ in $P_t(X)$ joining these two points. Thus we may define the loop
	$$
	\beta = \alpha_1 \cdot \ldots \cdot \alpha_{m-1}  \cdot (\gamma \cdot \alpha_{m+2})  \cdot \alpha_{m+3} \cdot \ldots \cdot \alpha_k
	$$
	which has $k-2$ paths as in Lemma \ref{lemma:cutting_loops_into_arcs}.  Write $\varphi_\N([\alpha]) = \omega \cdot a_j^{\epsilon_m}a_j^{-\epsilon_m} \cdot \omega'$.
	Evidently, $\varphi_\N([\beta]) = \omega\omega' = 1$ and by induction hypothesis $\varphi_\mathcal{A}([\beta])=1$. It is clear that in both the cases $X=A_j$ and $X\neq A_j$	we get $\varphi_\mathcal{A}([\alpha]) = \varphi_\mathcal{A}([\beta]) =1$, so $[\alpha] \in \ker\varphi_\mathcal{A}$. By induction $\ker\varphi_\N \subset \ker\varphi_\mathcal{A}$.
	
	Therefore $\varphi_\mathcal{A} = \eta \circ \varphi_\N$ for some epimorphism $\eta\colon F_r \to F_r$. Since free groups are Hopfian (see \cite{Bogopolski}), $\eta$ is an isomorphism, so $\ker \varphi_\N = \ker \varphi_\mathcal{A}$. Because $[\tau_j]$'s  generate a subgroup of $\pi_1(W)$ mapped isomorphically onto $F_r$ by $\varphi_\N$ and $\varphi_\mathcal{A}$ on which they are equal,
	we obtain $\varphi_\mathcal{A} = \varphi_\N$ everywhere and the theorem is proved.
\end{proof}

\section{Systems of hypersurfaces in the classification of epimorphisms} \label{section:cobordism_of_systems_and_classification_of_epimorphism}

Hereafter, $\Sigma_g$ and $S_g$ denote respectively an orientable and non-orientable closed surface of genus~$g$.

Let $G$ be a finitely generated group and $\varphi \colon G \to F_r$ be an epimorphism. The number $r$ is called the \emph{rank} of an epimorphism~$\varphi$. The \emph{corank} of $G$ is defined as the largest rank of an epimorphism from $G$ onto a free group and it is denoted by $\corank(G)$. Since $G$ is finitely generated it is well-defined and 
$$
\corank(G) \leq \rank_\Z \ab(G),
$$
where $\ab(G)$ is the abelianization of $G$. For more information about the corank and its properties we refer to \cite{Cornea, Gelbukh:corank, Gelbukh:filomat, Jaco,Michalak-DCG}. In the case when $G=\pi_1(X)$ the corank of $G$ is also called the first non-commutative Betti number of $X$ (cf. \cite{Gelbukh:corank}). We only recall that $\corank(\pi_1(\Sigma_g))=g$ and $\corank(\pi_1(S_g))= \left\lfloor g/2 \right\rfloor$, the floor of $g/2$.

Grigorchuk, Kurchanov and Zieschang in \cite{Grigorchuk-Kurchanov, Grigorchuk-Zieschang} studied epimorphisms onto free groups from fundamental groups of compact surfaces. As in their papers,
we call two homomorphisms $\varphi, \psi  \colon G \to H$ \emph{equivalent}, and we denote it by  $\varphi \sim \psi$, if there exist isomorphisms $\nu \colon G \to G$ and
$\eta \colon H \to H$ such that $\varphi \circ \nu = \eta \circ \psi$. They are called \emph{strongly equivalent} if one can choose $\eta=\operatorname{id}_{H}$.  In this case we write $\varphi \simeq \psi$. Obviously, it implies that $\varphi \sim \psi$. We are interested in the case $H=F_r$.

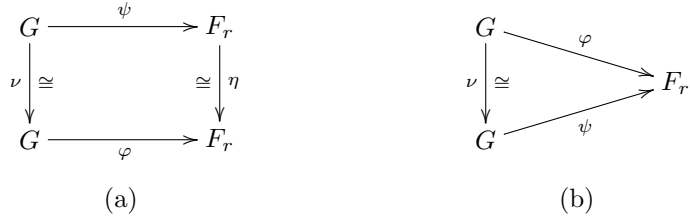
\begin{figure}[h]
	$$
	\begin{xy}
		(0,15)*+{G}="G2";
		(0,0)*+{G}="G1";
		(25,15)*+{F_r}="F1";
		(25,0)*+{F_r}="F2";
		{\ar@{->}_{\varphi} "G1"; "F2"};%
		{\ar@{->}_{\nu}^{\cong} "G2"; "G1"};%
		{\ar@{->}^{\psi} "G2"; "F1"};%
		{\ar@{->}^{\eta}_{\cong}  "F1"; "F2"};
		(12,-8)*+{\text{(a)}}="a";
		(60,15)*+{G}="Ga";
		(60,0)*+{G}="Gb";
		(85,7.5)*+{F_r}="F";
		{\ar@{->}^{\psi} "Ga"; "F"};%
		{\ar@{->}_{\nu}^{\cong} "Ga"; "Gb"};%
		{\ar@{->}_{\varphi} "Gb"; "F"};%
		(72,-8)*+{\text{(b)}}="b";
	\end{xy}
	$$
	\caption{Equivalence (a) and strong equivalence (b) of epimorphisms onto free groups.}\label{figure:factorization_and_equivalence}
\end{figure}

In this section we apply the results of the previous section to the problem of classification of epimorphisms onto free groups up to equivalence and strong equivalence. In particular, we give an alternative proof of the following theorem.

\begin{theorem}[Grigorchuk--Kurchanov--Zieschang {\cite{Grigorchuk-Kurchanov, Grigorchuk-algebra,Grigorchuk-Zieschang}}]\label{thm:grigorchuk}
	If $\Sigma$ is a closed surface of Euler characteristic $\chi(\Sigma)=2-k$ and $1 \leq r \leq  \left\lfloor \frac{k}{2} \right\rfloor =\corank(\pi_1(\Sigma))$, then there exist finite numbers $p$ and $q$
	of classes of epimorphisms $\pi_1(\Sigma) \to F_r$ with respect to equivalence and strong equivalence, respectively. More precisely,
	\begin{enumerate}[(1)]
		\item if $\Sigma$ is orientable, then $p=q=1$,
		\item if $\Sigma = S_k$ is non-orientable, then we have:
		\begin{enumerate}[(a)]
			\item $p=q=1$ if the genus $k=2m+1$ is odd,
			\item $p=2$ and $q=2^r$ if the genus $k=2m$ is even and $r < m$,
			\item $p=1$ and $q=2^r-1$ if the genus $k=2m$ is even and $r=m$.
		\end{enumerate}
	\end{enumerate}
\end{theorem}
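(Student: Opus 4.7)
My approach is to translate the counting of (strong) equivalence classes of rank-$r$ epimorphisms $\pi_1(\Sigma) \to F_r$ into a counting of $\diffd(\Sigma)$-orbits of framed cobordism classes of regular and independent systems of size $r$ on $\Sigma$, using the bijective correspondence described at the beginning of this section. For a closed surface the Dehn--Nielsen theorem supplies the required surjectivity $\diffd(\Sigma) \twoheadrightarrow \aut(\pi_1(\Sigma))$, so strong equivalence classes of rank-$r$ epimorphisms correspond bijectively to the elements of $\H^{fr}_r(\Sigma)/_{\diffd(\Sigma)}$. Since every hypersurface in a surface is a $2$-sided simple closed curve, and by the remark following Proposition \ref{proposition:framed_cobordant_systems=the_same_induced_epimorphisms} a framing is up to cobordism just a choice of side, a regular and independent system of size $r$ is an $r$-tuple of disjoint non-separating simple closed curves with connected complement, each equipped with an orientation of its normal bundle.

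For $\Sigma = \Sigma_g$ orientable I would invoke the change-of-coordinates principle: any two $r$-tuples of disjoint non-separating simple closed curves on $\Sigma_g$ with connected complement are carried to one another by an ambient diffeomorphism. Moreover, for each chosen $N_i$ there is a diffeomorphism of $\Sigma_g$ supported in a neighbourhood of $N_i$ together with a transverse simple closed curve that reverses the side of $N_i$ while fixing the remaining curves, so the framings contribute no extra orbits. Hence $q = p = 1$.

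For $\Sigma = S_k$ non-orientable, cutting along a regular and independent system of size $r$ yields a connected surface of Euler characteristic $2-k$ with $2r$ boundary circles, which by the classification of surfaces is either orientable of genus $(k-2r)/2$ (only possible when $k$ is even) or non-orientable of genus $k-2r$ (only possible when $k - 2r \geq 1$). When $k = 2m+1$ is odd the complement is forced to be non-orientable, the surface classification gives a single orbit of systems, and each framing can be reversed using an arc in the complement that passes through a one-sided simple closed curve; hence $p=q=1$. For $k = 2m$ I would attach to each $N_i$ a binary invariant recording whether $N_i$ lies ``on the orientable side'' of a handle decomposition adapted to $\N$, and prove via surface classification that two systems with the same $r$-tuple of invariants are carried to one another by an element of $\diffd(S_{2m})$. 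This gives $2^r$ diffeomorphism orbits, except that in the maximal case $r = m$ the complement must be a $2m$-times punctured sphere and hence planar; the ``all-orientable'' combination would then reassemble $S_{2m}$ into an orientable closed surface, a contradiction, so it is excluded and leaves $2^r - 1$ orbits. Thus $q = 2^r$ when $r < m$ and $q = 2^r - 1$ when $r = m$.

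Finally, to pass from strong equivalence to equivalence I would realise the elementary Nielsen transformations generating $\aut(F_r)$ as geometric operations on systems: reversing a framing of $N_i$ realises $a_i \mapsto a_i^{-1}$, the connected-sum operation $N_i \#_\gamma (\text{pushoff of } N_j)$ along a suitable arc realises $a_i \mapsto a_i a_j$, and transposing two curves realises the corresponding generator permutation. Tracking the effect of these operations on the binary invariants from the previous paragraph, I expect a single mod-$2$ parity of the $r$-tuple of invariants to be preserved under all Nielsen moves when $r < m$, collapsing the $2^r$ strong orbits into exactly $p = 2$ equivalence orbits, while for $r = m$ the excluded ``all-orientable'' configuration forces the remaining $2^r - 1$ configurations to fuse into a single Nielsen orbit, giving $p = 1$. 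The main obstacle I anticipate is precisely this last analysis on $S_{2m}$: choosing the correct binary invariant on each $N_i$, proving that diffeomorphism classifies systems with the same invariants, and verifying that the Nielsen operations act on these invariants in the stated parity-preserving way.
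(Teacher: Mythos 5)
Your overall strategy coincides with the paper's: pass to $\H^{fr}_r(\Sigma)/_{\diffd(\Sigma)}$ via Dehn--Nielsen (Proposition \ref{proposition:bijection_for_surfaces_epi_and_systems}), classify systems of curves, then realize Nielsen transformations geometrically to go from $q$ to $p$. However, there are three concrete gaps. First, in the even-genus case your bookkeeping of the $2^r$ orbits for $r<m$ is off. If the binary invariant attached to $N_i$ is (as it must be for the count to work) whether the gluing across $N_i$ is orientation-reversing relative to an orientation of the cut surface, then the ``all-orientable'' configuration is impossible for \emph{every} $r$ with orientable complement --- it would make $S_{2m}$ orientable --- not just for $r=m$. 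The correct count for $r<m$ is $(2^r-1)$ classes with orientable complement \emph{plus one additional class whose complement $\Sigma|\N$ is non-orientable} (the class $\N_0$ of Theorem \ref{theorem:calculations_of_cobordisms_for_surfaces}, which exists precisely when $r<m$). Your proposal never mentions systems with non-orientable complement, and your total of $2^r$ comes out right only because two omissions cancel. Second, you only argue that systems with equal invariants are diffeomorphic, which bounds $q$ from above; you give no reason why systems with distinct invariants cannot be \emph{framed cobordant} to diffeomorphic images of one another. This lower bound is the hard part: the paper's Proposition \ref{proposition:not_cobordant_if_one_complement_is_orientable_and_second_not} proves it by lifting a hypothetical cobordism to the orientation double cover and deriving a contradiction with surjectivity of the induced map to $\Z$. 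Some argument of this kind is indispensable.

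Third, for the passage to equivalence classes your conjectured invariant --- the mod-$2$ parity of the $r$-tuple --- is not preserved by the Nielsen moves. The geometric realization of $a_j\mapsto a_j a_i$ (the operation (H3)) sends the system labelled by a subset $J$ to the one labelled by $J\cup\{i\}$ when $i\notin J$ and $j\in J$, which changes $|J|$ by one. The invariant that actually separates the two equivalence classes for $r<m$ is the orientability of the complement $\Sigma|\N$ (Lemma \ref{lemma:H1_H3_do_not_change_orientability}): the class $\N_0$ is isolated, while all the classes $\N_I$ with $\varnothing\neq I$ fuse into a single equivalence class because repeated use of (H3) carries any $\N_J$ to $\N_{\{1,\ldots,r\}}$. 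You correctly flag this step as the main obstacle, but the parity guess would lead the computation astray; as written, the proposal does not establish $p=2$ for $r<m$ or $p=1$ for $r=m$.
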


\begin{proposition}[{\cite{Grigorchuk-algebra}}]\label{thm:grigorchuk_free}
	For $m \geq r$ there exists only one class of epimorphisms $F_m \to F_r$ up to strong equivalence.
\end{proposition}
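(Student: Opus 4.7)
The plan is to show that every epimorphism $\varphi\colon F_m\to F_r$ is strongly equivalent to a single fixed reference epimorphism, namely the standard projection, and then conclude by transitivity of strong equivalence. This keeps the argument purely algebraic, which seems natural given that the statement concerns only free groups.

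Concretely, I would fix a basis $x_1,\ldots,x_m$ of $F_m$ and a basis $y_1,\ldots,y_r$ of $F_r$, and let $\pi\colon F_m\to F_r$ denote the projection defined by $\pi(x_i)=y_i$ for $i\leq r$ and $\pi(x_i)=1$ for $i>r$. Given an arbitrary epimorphism $\varphi$, I would set $w_i:=\varphi(x_i)$; since $\varphi$ is surjective, the tuple $(w_1,\ldots,w_m)$ is a generating $m$-tuple of $F_r$. The key input is the classical theorem of Nielsen on generating sets of free groups: any generating $m$-tuple of a free group of rank $r\leq m$ can be transformed into $(y_1,\ldots,y_r,1,\ldots,1)$ by a finite sequence of elementary Nielsen transformations on its entries (inverting a single entry, replacing $w_i$ by $w_iw_j^{\pm1}$ for $i\neq j$, or permuting the entries).

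Next I would translate this reduction to the source $F_m$: each elementary Nielsen transformation applied to the image tuple $(w_1,\ldots,w_m)$ is induced by the corresponding elementary transformation of the basis $(x_1,\ldots,x_m)$, which is an automorphism of $F_m$ (it is classical that these elementary transformations generate $\aut(F_m)$). Composing the resulting automorphisms in the order produced by Nielsen's reduction yields $\nu\in\aut(F_m)$ with $\varphi\circ\nu(x_i)=y_i$ for $i\leq r$ and $\varphi\circ\nu(x_i)=1$ for $i>r$, so $\varphi\circ\nu=\pi$ and hence $\varphi\simeq\pi$. Performing the same reduction for a second epimorphism $\psi$ and combining with $\pi\simeq\pi$ finishes the proof.

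The only genuinely non-routine step is the appeal to Nielsen's theorem on generating $m$-tuples of $F_r$, which I would regard as a black box; everything else is the straightforward dictionary between Nielsen transformations on image tuples and Nielsen automorphisms of the source. The absence of any case distinction here reflects the fact that, unlike the surface setting of Theorem \ref{thm:grigorchuk}, there are no orientation- or parity-type obstructions at the purely algebraic level of free groups, which is exactly why a single strong equivalence class suffices.
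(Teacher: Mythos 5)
Your argument is correct: the reduction of the generating $m$-tuple $(\varphi(x_1),\ldots,\varphi(x_m))$ of $F_r$ to $(y_1,\ldots,y_r,1,\ldots,1)$ by Nielsen's theorem, combined with the dictionary that lifts each elementary transformation of the image tuple to an elementary Nielsen automorphism of the source $F_m$, gives exactly $\varphi\circ\nu=\pi$ and hence a single strong equivalence class. Note that the paper offers no proof of this proposition at all --- it is quoted from Grigorchuk--Kurchanov --- and your argument is precisely the standard algebraic one found in that source, so there is nothing to contrast; the only point worth making explicit is that the Nielsen-reduced tuple has exactly $r$ nontrivial entries because free groups have well-defined rank, and that passing from the resulting basis $(u_1,\ldots,u_r)$ to $(y_1,\ldots,y_r)$ by further elementary moves uses that these moves generate $\aut(F_r)$.
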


It should be noted that the Poincar{\'e} conjecture is equivalent to the classification of some pairs of epimorphisms onto free groups, which shows the importance of their studies.

\begin{theorem}[Stallings--Jaco--Waldhausen--Hempel, \cite{Hempel, Jaco-splittings}] 
	\ \\
	The Poincar{\'e} conjecture holds if and only if for each $g\geq 2$ any two epimorphisms $\pi_1(\Sigma_g) \to F_g \times F_g$ are equivalent.
\end{theorem}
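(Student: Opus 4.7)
My plan is to translate the algebraic problem about equivalence classes of epimorphisms $\pi_1(\Sigma_g)\to F_g\times F_g$ into a geometric problem about genus-$g$ Heegaard splittings of homotopy $3$-spheres (Stallings' criterion), and then close the argument with Waldhausen's uniqueness theorem for Heegaard splittings of $S^3$.

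First I would set up the dictionary. Write $\phi=(\phi_1,\phi_2)$ with $\phi_i\colon\pi_1(\Sigma_g)\to F_g$; each $\phi_i$ is an epimorphism of the maximum possible rank $g=\corank(\pi_1(\Sigma_g))$, so by Theorem~\ref{thm:grigorchuk}, which the preceding sections re-prove geometrically via regular independent systems, each $\phi_i$ is strongly equivalent to the standard epimorphism. Consequently $\ker\phi_i$ is normally generated by a regular independent system of $g$ pairwise disjoint non-separating simple closed curves on $\Sigma_g$ which constitutes a complete meridian system of a genus-$g$ handlebody $H^i$ filling $\Sigma_g$. The pair $(\phi_1,\phi_2)$ thus produces a genus-$g$ Heegaard splitting $M_\phi := H^1\cup_{\Sigma_g} H^2$ of a closed orientable $3$-manifold, and the equivalence relation on epimorphisms translates exactly to homeomorphism of the triples $(\Sigma_g,H^1,H^2)$ as Heegaard splittings.

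Second, I would establish the Stallings--Jaco criterion: $M_\phi$ is a homotopy $3$-sphere if and only if $\phi$ is surjective. By van Kampen, $\pi_1(M_\phi)\cong\pi_1(\Sigma_g)/(\ker\phi_1\cdot\ker\phi_2)$, and Goursat's lemma identifies the image of $(\phi_1,\phi_2)$ with all of $F_g\times F_g$ precisely when $\ker\phi_1\cdot\ker\phi_2=\pi_1(\Sigma_g)$, equivalently when $\pi_1(M_\phi)=1$; Hurewicz then upgrades this to a homotopy $3$-sphere in dimension three. Epimorphisms $\pi_1(\Sigma_g)\to F_g\times F_g$ are thereby placed in bijection with genus-$g$ Heegaard splittings of homotopy $3$-spheres. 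Now invoking Waldhausen's theorem, any two genus-$g$ Heegaard splittings of $S^3$ are ambient isotopic and thus induce equivalent epimorphisms. The forward direction follows: assuming the Poincar\'e conjecture, every $M_\phi\cong S^3$, all such splittings coincide up to isotopy, and all $\phi$'s are equivalent. For the converse, given a homotopy $3$-sphere $M$ with a genus-$g$ Heegaard splitting $\phi_M$, the hypothesis gives $\phi_M\sim\phi_{S^3}$, and reading this equivalence as a homeomorphism of Heegaard splittings yields $M\cong S^3$.

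The main obstacle is this last realization step in the converse direction: the abstract isomorphisms $\nu\in\aut(\pi_1(\Sigma_g))$ and $\eta\in\aut(F_g\times F_g)$ witnessing $\phi_M\sim\phi_{S^3}$ must be promoted to a genuine homeomorphism of $\Sigma_g$ that carries one pair of meridian systems onto the other and extends across both handlebodies. The piece on $\pi_1(\Sigma_g)$ is handled by the Dehn--Nielsen theorem (as noted in the introduction). One must additionally check that $\eta$ respects the product structure on $F_g\times F_g$ up to a swap of factors, so that meridians of $H^1$ are matched with meridians of $H^1$ and similarly for $H^2$, and then invoke the classical fact that an automorphism of $\pi_1(H_g)=F_g$ carrying a standard meridian system to another is induced by a homeomorphism of the handlebody. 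These realization results are precisely what the papers of Jaco, Waldhausen, and Hempel cited in the statement assemble, and the hypothesis $g\geq 2$ is needed to rule out the degenerate genus-$1$ splitting of $S^3$ (where $F_1\times F_1=\Z^2$ is not a free group and the argument above is not meaningful).
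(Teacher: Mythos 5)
The paper does not prove this statement: it is quoted as a classical theorem and attributed to Hempel and Jaco, so there is no in-paper argument to compare against. Judged on its own, your outline is the standard Stallings--Jaco argument from those references and is essentially correct: the dictionary between epimorphisms $\pi_1(\Sigma_g)\to F_g\times F_g$ and genus-$g$ Heegaard splittings (each component $\phi_i$ has maximal rank $g=\corank(\pi_1(\Sigma_g))$, hence is induced by a regular independent system of $g$ disjoint circles with planar complement, i.e.\ a complete meridian system of a handlebody), the van Kampen/Goursat computation showing that surjectivity of $(\phi_1,\phi_2)$ is equivalent to $\ker\phi_1\cdot\ker\phi_2=\pi_1(\Sigma_g)$, i.e.\ to $\pi_1(M_\phi)=1$, and the appeal to Waldhausen's uniqueness of Heegaard splittings of $S^3$ are all correctly assembled, and you rightly locate the real work in the realization step.

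Two small points deserve care if this were to be written out in full. First, the claim that ``equivalence of epimorphisms translates exactly to homeomorphism of the triples'' is the crux and is not free: one needs (i) that $\aut(F_g\times F_g)\cong(\aut(F_g)\times\aut(F_g))\rtimes\Z/2$ for $g\ge 2$ (which holds because $F_g$ is centerless and directly indecomposable, so automorphisms permute the factors), (ii) Dehn--Nielsen to realize $\nu\in\aut(\pi_1(\Sigma_g))$ by a surface homeomorphism, and (iii) the loop-theorem fact that a homeomorphism of $\partial H$ extends over the handlebody $H$ exactly when it preserves $\ker(\pi_1(\partial H)\to\pi_1(H))$; you name all three, so this is a flagged dependence rather than a gap. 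Second, the stated reason for $g\ge 2$ is slightly off target: the restriction is not merely that $\Z^2$ fails to be free, but that for $g\le 1$ the factor-permutation structure of $\aut(F_g\times F_g)$ breaks down and, more importantly, homotopy $3$-spheres of Heegaard genus $\le 1$ are classically known to be $S^3$, so the equivalence carries no content there; one also uses stabilization to ensure every homotopy sphere admits a splitting of each genus $g\ge 2$, which you implicitly invoke in the converse.
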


\subsection{Systems of hypersurfaces up to framed cobordism and diffeomorphism}\label{section:calculation_of_cobordisms}

Let us denote by $\mathcal{H}_r(M)$ the set of all independent and regular systems of hypersurfaces in $M$ of size $r$ which omit the basepoint, and by $\H_r^{fr}(M)$ the set of framed cobordism classes of elements of $\H_r(M)$.
On each of these sets there is a natural action of $\diffd(M)$, the set of self-diffeomorphisms of $M$ which preserve the basepoint, so we may form the orbit space $\H^{fr}_r(M)/_{\diffd(M)}$.  Note that if $h \in \diffd(M)$, then a system $\N=(N_1,\ldots,N_r)$ and its image $h(\N) = (h(N_1),\ldots,h(N_r))$ induce strongly equivalent homomorphisms.

Moreover, for groups $G$ and $H$ denote by $\epi(G,H)$ the set of all epimorphisms $G \to H$. 

We have the natural map $\Theta \colon \H_r(M) \to \epi(\pi_1(M),F_r)$ which sends a system $\N$ into the induced epimorphism~$\varphi_\N$. 
By Proposition \ref{proposition:framed_cobordant_systems=the_same_induced_epimorphisms} it factorizes through the injective map $\overline{\Theta} \colon \H_r^{fr}(M) \to \epi(\pi_1(M),F_r)$. Theorem \ref{theorem:epimorphism_is_induced_by_regular_and_independent_system} states that both these mappings are also surjective.

\begin{corollary}
	The map $\overline{\Theta} \colon \H^{fr}_r(M) \to \epi(\pi_1(M),F_r)$ is a bijection between the set of all framed cobordism classes of regular and independent systems of hypersurfaces of size $r$ in $M$ and the set of all epimorphisms from $\pi_1(M)$ onto the free group of rank $r$. \qed
\end{corollary}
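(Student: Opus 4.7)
The plan is to assemble the three results already proved in this section; the statement is essentially a formal consequence of them, with no significant new content required. First I would recall that the map $\Theta \colon \H_r(M) \to \epi(\pi_1(M), F_r)$, $\N \mapsto \varphi_\N$, takes values in epimorphisms (not merely homomorphisms) by Proposition \ref{proposition:homomorphisms_are_induced_by_systems}, which guarantees that independent systems induce surjections. Then Proposition \ref{proposition:framed_cobordant_systems=the_same_induced_epimorphisms} shows that framed cobordant systems induce the same homomorphism, so $\Theta$ is constant on framed cobordism classes and hence descends to a well-defined map $\overline{\Theta} \colon \H^{fr}_r(M) \to \epi(\pi_1(M), F_r)$.

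For injectivity, I would invoke the converse direction of Proposition \ref{proposition:framed_cobordant_systems=the_same_induced_epimorphisms}: if $[\N]$ and $[\N']$ lie in $\H^{fr}_r(M)$ and $\varphi_\N = \varphi_{\N'}$, then $\N$ and $\N'$ are framed cobordant in $M$ and therefore represent the same class in $\H^{fr}_r(M)$. For surjectivity, given any epimorphism $\varphi \colon \pi_1(M) \to F_r$, Theorem \ref{theorem:epimorphism_is_induced_by_regular_and_independent_system} produces a regular and independent system $\N$ in $M$ with $\varphi_\N = \varphi$; after a small isotopy supported away from $\N$ one may assume that $\N$ misses the basepoint, so that $\N \in \H_r(M)$ and $\overline{\Theta}([\N]) = \varphi$.

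There is no real obstacle here — everything was done in the preceding results. The only mild subtlety worth flagging is that the cobordism furnishing injectivity need not itself be regular or independent as a system in $M \times [0,1]$; this is immaterial, because the equivalence relation defining $\H^{fr}_r(M)$ is framed cobordism between the endpoint systems, which are required to lie in $\H_r(M)$, while the interpolating cobordism is unrestricted. Thus $\overline{\Theta}$ is a bijection, as claimed.
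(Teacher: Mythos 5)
Your proposal is correct and follows exactly the route the paper takes: well-definedness and injectivity of $\overline{\Theta}$ come from Proposition \ref{proposition:framed_cobordant_systems=the_same_induced_epimorphisms}, surjectivity from Theorem \ref{theorem:epimorphism_is_induced_by_regular_and_independent_system}, and the fact that independent systems induce epimorphisms from Proposition \ref{proposition:homomorphisms_are_induced_by_systems}. The paper treats the corollary as immediate from the discussion preceding it, and your assembly of the three ingredients (including the harmless remark about the interpolating cobordism) matches that reasoning.
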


Now, let us consider the strong equivalence relation $\simeq$ on $\epi(G,F_r)$. The composition 
$$
\H^{fr}_r(M)\to \epi(\pi_1(M),F_r) \to \epi(\pi_1(M),F_r)/_\simeq
$$
is still surjective and it factorizes through the map $\overline{\overline{\Theta}} \colon \H^{fr}_r(M)/_{\diffd(M)} \to \epi(\pi_1(M),F_r)/_\simeq$.

\begin{corollary}
	The number of strong equivalence classes of epimorphisms $\pi_1(M)\to F_r$ is not greater than the cardinality of  $\H_r^{fr}(M)/_{\diffd(M)}$.
\end{corollary}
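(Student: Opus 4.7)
The plan is to verify that the composition
$$
\H^{fr}_r(M) \xrightarrow{\overline{\Theta}} \epi(\pi_1(M),F_r) \twoheadrightarrow \epi(\pi_1(M),F_r)/_\simeq
$$
descends to a well-defined surjection $\overline{\overline{\Theta}}$ from the $\diffd(M)$-orbit space $\H^{fr}_r(M)/_{\diffd(M)}$. Once this is established, the cardinality inequality in the statement is immediate, since a surjection between sets implies that the target has cardinality no larger than the source.

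The central step, already flagged in the paragraph preceding the statement, is the naturality property: if $\N=(N_1,\ldots,N_r) \in \H_r(M)$ and $h\in \diffd(M)$, then the induced homomorphisms $\varphi_\N$ and $\varphi_{h(\N)}$ are strongly equivalent. To make this explicit, I would choose product neighbourhoods for $h(\N)$ as the images $P(h(N_i)) := h(P(N_i))$ with framings pushed forward by $dh$. Under these choices the map $f_{h(\N)}$ of Definition \ref{definition:induced by system of hyperspaces} satisfies the literal identity $f_{h(\N)} = f_\N \circ h^{-1}$, so on fundamental groups
$$
\varphi_{h(\N)} = (f_{h(\N)})_\# = \varphi_\N \circ (h_\#)^{-1}.
$$
Since $h$ fixes the basepoint, $h_\#$ is an automorphism of $\pi_1(M)$, and so this is precisely the relation of strong equivalence in the sense of Figure \ref{figure:factorization_and_equivalence}(b) with $\nu = (h_\#)^{-1}$ and the identity on $F_r$. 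A different choice of product neighbourhoods for $h(\N)$ gives a framed cobordant system, and Proposition \ref{proposition:framed_cobordant_systems=the_same_induced_epimorphisms} ensures this does not alter the conclusion.

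Combining this with Proposition \ref{proposition:framed_cobordant_systems=the_same_induced_epimorphisms}, the composition above is invariant under both framed cobordism of systems and the $\diffd(M)$-action on $\H_r(M)$, so it factors as claimed through $\overline{\overline{\Theta}}$. Surjectivity of $\overline{\overline{\Theta}}$ is inherited from surjectivity of $\overline{\Theta}$, which is Theorem \ref{theorem:epimorphism_is_induced_by_regular_and_independent_system}. Hence
$$
\bigl|\epi(\pi_1(M),F_r)/_\simeq\bigr| \leq \bigl|\H_r^{fr}(M)/_{\diffd(M)}\bigr|,
$$
as required.

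I do not expect a genuine obstacle in this argument; it is essentially bookkeeping built on two already-proved facts (the bijection $\overline{\Theta}$ and the functoriality of $\varphi_\N$ under basepoint-preserving diffeomorphisms). The only point that deserves a moment of care is that $f_{h(\N)}$ depends on auxiliary choices of product neighbourhoods, so one must either choose them to be $h$-compatible (as above) or invoke framed cobordism invariance to absorb the discrepancy; either route leads to the same conclusion.
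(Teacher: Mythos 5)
Your argument is correct and is essentially the paper's own: the paper notes that $h(\N)$ and $\N$ induce strongly equivalent epimorphisms (the identity $f_{h(\N)} = f_\N \circ h^{-1}$ you make explicit is the same one used later in the proof of Proposition \ref{proposition:bijection_for_surfaces_epi_and_systems}), and then observes that the surjective composition $\H^{fr}_r(M)\to \epi(\pi_1(M),F_r) \to \epi(\pi_1(M),F_r)/_\simeq$ factors through $\overline{\overline{\Theta}}$ on the orbit space, whence the cardinality bound. Your extra care about the choice of product neighbourhoods, absorbed by framed cobordism invariance, is a reasonable piece of bookkeeping that the paper leaves implicit.
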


The question is when the latter set is finite. It is for example the case for the surface groups.

\begin{proposition}\label{proposition:bijection_for_surfaces_epi_and_systems}
	For a closed surface $\Sigma$ the map $\overline{\overline{\Theta}} \colon \H^{fr}(\Sigma)/_{\diffd(\Sigma)} \to \epi(\pi_1(\Sigma),F_r)/_\simeq$ is a bijection.
\end{proposition}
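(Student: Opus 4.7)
The plan is to combine the bijection $\overline{\Theta} \colon \H^{fr}_r(\Sigma) \to \epi(\pi_1(\Sigma), F_r)$ obtained in the previous corollary with the Dehn--Nielsen--Baer theorem, which for every closed surface $\Sigma$ guarantees that the natural homomorphism $\diffd(\Sigma) \to \aut(\pi_1(\Sigma))$ is surjective. The cases $\Sigma = \es^2$ and $\Sigma = \rp$ are vacuous since $\corank(\pi_1(\Sigma))=0$; for all remaining closed surfaces, orientable and non-orientable, the Dehn--Nielsen--Baer theorem applies in its basepoint-preserving form.

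As preparation I would first record the behaviour of $\varphi_\N$ under the action of $\diffd(\Sigma)$. For $h\in\diffd(\Sigma)$ the product neighbourhoods and framings for $h(\N)$ may be taken as the pushforwards by $h$, so Definition~\ref{definition:induced by system of hyperspaces} gives $f_{h(\N)} = f_\N \circ h^{-1}$, whence
\[
\varphi_{h(\N)} \;=\; \varphi_\N \circ h_\#^{-1}.
\]
In particular $\varphi_{h(\N)}$ is strongly equivalent to $\varphi_\N$, which shows that $\overline{\overline{\Theta}}$ is well-defined on the $\diffd(\Sigma)$-orbits. Its surjectivity is already contained in Theorem~\ref{theorem:epimorphism_is_induced_by_regular_and_independent_system}, passed through the quotient $\epi(\pi_1(\Sigma),F_r) \to \epi(\pi_1(\Sigma),F_r)/_\simeq$.

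The substantive step is injectivity. Given $\N, \N' \in \H_r(\Sigma)$ with $\varphi_\N \simeq \varphi_{\N'}$, there exists $\nu \in \aut(\pi_1(\Sigma))$ with $\varphi_\N = \varphi_{\N'} \circ \nu$; by Dehn--Nielsen--Baer choose $h \in \diffd(\Sigma)$ realising $\nu = h_\#$. Applying the displayed formula to the system $h^{-1}(\N')$ yields
\[
\varphi_{h^{-1}(\N')} \;=\; \varphi_{\N'} \circ h_\# \;=\; \varphi_\N,
\]
so injectivity of $\overline{\Theta}$ forces $\N$ and $h^{-1}(\N')$ to represent the same framed cobordism class in $\H^{fr}_r(\Sigma)$. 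Since $h^{-1}(\N')$ and $\N'$ lie in the same $\diffd(\Sigma)$-orbit, this gives $[\N]=[\N']$ in $\H^{fr}_r(\Sigma)/_{\diffd(\Sigma)}$, as required.

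The only non-formal input is the Dehn--Nielsen--Baer theorem in the basepoint-preserving version, applicable uniformly to orientable and non-orientable closed surfaces (and trivial for $\es^2$, $\rp$). This is where I expect the only real subtlety to lie, since one needs the full automorphism group rather than the outer automorphism group and the diffeomorphisms must preserve the basepoint; once this input is granted, the rest is a short diagram chase that requires no further analysis of framed cobordism classes.
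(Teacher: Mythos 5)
Your proposal is correct and follows essentially the same route as the paper: surjectivity from Theorem~\ref{theorem:epimorphism_is_induced_by_regular_and_independent_system}, and injectivity by realising the automorphism $\nu$ via the Dehn--Nielsen theorem as $h_\#$ for some $h\in\diffd(\Sigma)$ and computing $\varphi_{h^{-1}(\N')}=\varphi_{\N'}\circ h_\#=\varphi_\N$, so that $\N$ and $h^{-1}(\N')$ are framed cobordant. Your extra remarks on the basepoint-preserving form of Dehn--Nielsen and the vacuous cases $\es^2$, $\rp$ are sensible but do not change the argument.
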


\begin{proof}
	We know that it is surjective. For injectivity it suffices to note that by Dehn--Nielsen Theorem (see \cite{Collins-Zieschang}) any automorphism of $\pi_1(\Sigma)$ can be represented by a self-diffeomorphism of~$\Sigma$. If $\varphi_\N$ and $\varphi_{\N'}$ are strongly equivalent by $\eta = h_\#$ induced by $h\in \diffd(\Sigma)$, then $\varphi_\N = \varphi_{\N'} \circ \eta = (f_{\N'}\circ h)_\# = (f_{h^{-1}(\N')})_\# = \varphi_{h^{-1}(\N')}$, so $\N$ and $h^{-1}(\N')$ are framed cobordant.
\end{proof}

\begin{remark}
	The same fact is true for any manifold $M$ for which any automorphism of $\pi_1(M)$ is induced by some element of $\diffd(M)$. By Mostow Rigidity Theorem it is the case for hyperbolic manifolds of dimension at least~$3$.
\end{remark}

Now, our aim is to calculate $\H^{fr}_r(\Sigma)/_{\diffd(\Sigma)}$. We need the following series of three lemmas.

\begin{lemma}\label{lemma:self_map_of_non_orientable_surface_which_reverse_orientation_on_boundary}
	Let $\Sigma$ be a non-orientable compact surface with $\partial\Sigma \neq \varnothing$ and $S\subset \partial\Sigma$ be a connected component. Then there exist $h\in\diffd(\Sigma)$ such that $h(S)=S$,  $h|_S$ is orientation-reversing and $h|_{\partial\Sigma\setminus S} = \id_{\partial\Sigma\setminus S}$.
\end{lemma}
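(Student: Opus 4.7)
The plan is to construct $h$ as the identity outside a small compact sub-surface $N\subset\Sigma$ containing $S$, using the non-orientability of $\Sigma$ in an essential way. First I would choose a simple closed curve $\gamma\subset\int\Sigma$ whose tubular neighborhood is a Möbius band (a one-sided curve, which exists because $\Sigma$ is non-orientable), together with an embedded arc $\alpha$ running from a point of $S$ to a point of $\gamma$ whose interior lies in $\int\Sigma$ and avoids $\partial\Sigma\setminus S$, $\gamma\setminus\{\alpha(1)\}$, and the basepoint $x_0$ (aside from the endpoint on $S$ if $x_0\in S$). Then I would set $N$ to be a small closed regular neighborhood of $S\cup\alpha\cup\gamma$.

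Next I would identify $N$ topologically. An Euler characteristic count (an annular collar of $S$ and a Möbius neighborhood of $\gamma$, each with $\chi=0$, joined by the $1$-handle $P(\alpha)$) gives $\chi(N)=-1$, and counting boundary circles through the $1$-handle attachment shows $\partial N = S\sqcup S''$ for some circle $S''\subset\int\Sigma$. Non-orientability of $N$ then pins it down as a Möbius band with an open disk removed.

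The core step is constructing the diffeomorphism on $N$. I would use the model $M = [0,1]\times[-1,1]/{(0,y)\sim(1,-y)}$ with a small open disk $D$ around $(1/2,0)$ removed, and take the involution $\tilde h(x,y)=(1-x,-y)$. One checks it descends to $M$, is rotation by $\pi$ about $(1/2,0)$, preserves $D$ setwise, acts on $\partial D$ as the antipodal map, and acts on $\partial M$ (parameterised circularly by $s\in[0,2]/(0\sim 2)$) as the orientation-reversing reflection $s\mapsto 2-s$ with two fixed points. To turn $\tilde h$ into the identity near $\partial D$, I would interpolate inside a thin annular collar $\{r_0\leq r\leq r_1\}$ of $\partial D$: define $h(r,\theta)=(r,\,\theta+\pi\,\psi(r))$ with $\psi$ smooth, $\psi\equiv 0$ near $r_0$ and $\psi\equiv 1$ near $r_1$, and put $h=\tilde h$ outside this collar. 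The resulting $h$ agrees with $\tilde h$ outside the collar, is the identity in a neighborhood of $\partial D$, and is therefore smooth.

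Finally I would extend $h$ by the identity on $\Sigma\setminus N$, obtaining a self-diffeomorphism of $\Sigma$ reversing orientation on $S$ and equal to the identity on $\partial\Sigma\setminus S$. For the basepoint: if $x_0\in\int\Sigma$ or $x_0\in\partial\Sigma\setminus S$ this is immediate by disjointness or by the identity extension, while if $x_0\in S$, I would choose the identification $N\cong M\setminus D^\circ$ so that $x_0$ corresponds to one of the two fixed points of $\tilde h|_{\partial M}$. The main obstacle I expect is the middle step: verifying that the regular neighborhood is precisely a holed Möbius band (not, say, a holed Klein bottle) and that the model involution has the asserted boundary behaviour. Non-orientability is essential here; for an orientable $\Sigma$ with at least two boundary components no such $h$ exists, since a global orientation together with $h|_{S'}=\id$ on some other component $S'\subset\partial\Sigma$ would force $h|_S$ to be orientation-preserving.
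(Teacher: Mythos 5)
Your construction is correct, but it follows a genuinely different route from the paper's. The paper caps off $S$ with a disc $B$, identifies the capped surface with $\Sigma''\#\rp$ so that $B$ lies inside a M\"obius band $\Sigma'$ symmetrically with respect to its core, and uses the reflection of $\Sigma'$ across the core: that reflection has degree $-1$ on $S=\partial B$, is orientation-preserving (hence isotopic to the identity) on $\partial\Sigma'$, and therefore extends by the identity outside $\Sigma'$ and restricts to the desired $h$. You instead keep $S$ in place, engulf $S\cup\alpha\cup\gamma$ in a holed M\"obius band $N$ whose \emph{outer} boundary is $S$, and use the point-reflection involution $(x,y)\mapsto(1-x,-y)$, damped to the identity near the inner boundary circle. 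Both arguments localize the problem to a M\"obius band supplied by non-orientability and extend an involution by the identity; yours avoids the capping-and-regluing step and the identification $\Sigma\cup_S B\cong\Sigma''\#\rp$ with a prescribed position of $B$, at the cost of identifying the regular neighbourhood $N$, which you do correctly ($\chi(N)=-1$, two boundary circles, non-orientable, hence $N\cong S_{1,2}$ in the paper's notation). Your verifications of the model involution (it descends to the quotient, acts as $s\mapsto 2-s$ on $\partial M$ and as the antipodal map on $\partial D$) are accurate.

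One step should be made explicit. You need a diffeomorphism $N\to M\setminus D^{\circ}$ carrying $S$ to the outer boundary $\partial M$, where $\tilde h$ reverses orientation, and not to $\partial D$, where $\tilde h$ has degree $+1$ and is useless for your purpose. The identification obtained by matching spines goes the wrong way: in $N$ the circle $S$ is the free boundary of the annular collar while $\gamma$ carries the M\"obius piece, whereas in $M\setminus D^{\circ}$ the free boundary of the annular piece of the spine is $\partial D$ and the core carries the M\"obius piece, so the natural matching sends $S$ to $\partial D$. The fix is easy but needs to be said: the two boundary circles of a once-holed M\"obius band are interchangeable by a self-diffeomorphism (view it as $\rp$ with two disjoint discs removed, engulf both discs in a single larger disc, and swap them there by a compactly supported diffeomorphism), so a diffeomorphism $N\to M\setminus D^{\circ}$ with $S\mapsto\partial M$ does exist; composing further with a ``rotation'' of the M\"obius band along its core lets you place the basepoint, when it lies on $S$, at one of the two fixed points of $\tilde h|_{\partial M}$. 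With these remarks added, the proof is complete.
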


\begin{proof}
	First, assume that $\Sigma$ is the projective plane $\rp$ with one disc $B$ removed, i.e. $\Sigma = \rp \setminus \int B$ and  $S=\partial B$. Let $D\subset \int \Sigma$ be another disc and $\Sigma'=\Sigma\setminus D \cup B$ be a M\"obius band. Fix a parametrization $\Sigma' \cong [-1,1]\times[0,1] / (t,0) \sim (-t,1)$ for $t\in [-1,1]$ such that $S \subset \int \Sigma'$ is symmetric with respect to the core $\{0\}\times[0,1]$, i.e. if $(t,x)\in S$, then $(-t,x)$ is also in $S$. Then $h' \colon \Sigma' \to \Sigma'$ defined by $h'(t,x) = (-t,x)$ is a self-diffeomorphism such that $h'|_S \colon S \to S$ has degree $-1$, so it is orientation-reversing, but on $\partial \Sigma'$ it is orientation-preserving, so isotopic to the identity. Thus we can extend $h'$ to $h\colon \rp \to \rp$ such that $h(B)=B$ and $h|_S$ has degree $-1$, and take $h|_\Sigma$.
	
	In general case, glue a disc $B$ and $\Sigma$ along $S$ and take a diffeomorphism $\Sigma \cup_S B \to \Sigma'' \# \rp$ such that $B \subset \Sigma' \subset  \rp$ as before. The lemma follows from the first case.
\end{proof}

\begin{lemma}\label{lemma:when_non_independent_system_induces_no_surjection}
	Let $\N = (N_1 \cup N_2)$ be a system of size $1$ in a manifold $M$ such that $N_1$ and $N_2$ are connected. If $M\setminus N_1$ and $M\setminus N_2$ are connected, but $M|\N$ is disconnected, then $\varphi_\N \colon \pi_1(M)\to \Z$ is not surjective.
\end{lemma}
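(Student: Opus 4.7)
The plan is to establish that the image of $\varphi_\N\colon \pi_1(M)\to \Z$ lies inside $2\Z$, which will show it is not surjective. The key geometric observation is that, under the hypotheses, both $N_1$ and $N_2$ behave as ``separating walls'' between the two components of $M|\N$, so that every transverse crossing of a loop with $\N$ is a switch between these components.

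First I would write $M|\N = U \sqcup V$ for a decomposition into two non-empty open-closed subsets, and note that each of the four connected sides $P_{+1}(N_i), P_{-1}(N_i) \subset M|\N$ (for $i=1,2$), being connected, lies entirely in $U$ or entirely in $V$. Then I would use the identification
$$
M\setminus N_1 = (M|\N) \cup \int P(N_2) \cup \bigl(\int P(N_1)\setminus N_1\bigr),
$$
where $\int P(N_2)$ joins $P_{+1}(N_2)$ to $P_{-1}(N_2)$ and where $\int P(N_1)\setminus N_1$ consists of two disjoint collars attached to $P_{+1}(N_1)$ and to $P_{-1}(N_1)$ separately. Since the collars of $N_1$ cannot connect $U$ to $V$, the connectivity of $M\setminus N_1$ forces the two sides of $N_2$ to lie in different components of $M|\N$. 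Interchanging the roles of $N_1$ and $N_2$, the same argument gives the analogous statement for $N_1$.

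From here the conclusion follows by a parity count. For any loop $\alpha \in \pi_1(M)$ placed in general position with respect to $\N$, each intersection point of $\alpha$ with $\N$ is a transition between $U$ and $V$ by the previous step, so the total unsigned number of intersection points is even. Since a sum of $\pm 1$'s has the same parity as the number of terms, Lemma \ref{lemma:cutting_loops_into_arcs} yields
$$
\varphi_\N([\alpha]) \equiv \#(\alpha \pitchfork \N) \equiv 0 \pmod 2,
$$
so $\im \varphi_\N \subseteq 2\Z$, which is a proper subgroup of $\Z$.

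The main obstacle, though relatively mild, is the bookkeeping in the second paragraph: one must carefully identify the only possible bridge between $U$ and $V$ inside $M\setminus N_1$ as the product neighborhood of $N_2$, and translate this into the precise statement about the sides of $N_2$ lying in different components. Once this structural statement is in place, the parity argument in the last paragraph is essentially automatic and does not require any analysis of the framings themselves.
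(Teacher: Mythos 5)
Your proof is correct and follows essentially the same route as the paper's: the key point in both is that connectivity of $M\setminus N_1$ and $M\setminus N_2$ forces each transverse crossing of $\N$ to switch between the two components of $M|\N$, after which the parity count on the word produced by Lemma \ref{lemma:cutting_loops_into_arcs} finishes the argument. The only differences are cosmetic --- you conclude $\im\varphi_\N\subseteq 2\Z$ directly while the paper argues by contradiction from a loop with image $\pm 1$, and you spell out the component-switching step (via the decomposition of $M\setminus N_1$) in more detail than the paper does.
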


\begin{proof}
	Assume that $\varphi_\N$ is an epimorphism. Then there is a loop $\alpha$ in a general position to $\N$ such that $\varphi_\N ([\alpha]) = \pm 1$. As in Lemma \ref{lemma:cutting_loops_into_arcs} write $\alpha = \alpha_1 \ldots \alpha_k$ as a concatenation of paths $\alpha_i$, each of which intersects $\N$ in a single point. Therefore 
	$$
	1 \equiv \pm 1 = \varphi_\N ([\alpha]) \equiv k \mod 2,
	$$
	so $k$ is odd. By the assumption $M|\N$ has exactly two components. Since $N_1$, $N_2$, $M\setminus N_1$ and $M\setminus N_2$ are connected, each $\alpha_i$ joins both the components of $M|\N$. Thus $k$ is even, because $\alpha$ is a loop, so it starts and ends at the same point. It gives a contradiction, so $\varphi_\N$ is not surjective. 
\end{proof}

\begin{remark}
	While we know that independent systems induce surjective homomorphisms, non-independent systems can induce both surjective or not surjective homomorphisms. The above lemma shows when $\varphi_\N$ is not an epimorphisms and it can be generalized for other similar situations.
\end{remark}

\begin{lemma}\label{lemma:changing_system_to_regular_and_the_same_orientantion_of_the_complement}
	For an independent system $\N= (N_1\cup\ldots \cup N_r)$  of size $1$ in a manifold $M$ there exists a regular and independent system $\N' = (N')$ which is framed cobordant to $\N$, so $\varphi_\N = \varphi_{\N'}$. Moreover,
	\begin{enumerate}[(1)]
		\item The complement $M|\N'$ can be non-orientable if $M|\N$ is non-orientable.
		\item The complement $M|\N'$ is orientable if $M|\N$ is orientable and $M|\N \cup P(N_i)$ is non-orientable for each~$i$. 
	\end{enumerate}
\end{lemma}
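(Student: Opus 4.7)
The plan is to construct $\N'$ by iteratively merging two connected components of $N$ at a time via the connected-sum operation of Lemma~\ref{lemma:connected_sum_of_submanifolds}, choosing the connecting arcs so as to control the orientability of the resulting complement. Since $\N$ is independent, $M|\N$ is connected, so for any two components $N_a,N_b$ of $N$ there is an arc $\gamma\subset M|\N$ with endpoints on $P_1(N_a)$ and $P_1(N_b)$ --- a path joining $N_a,N_b$ from the common positive side, as required by Lemma~\ref{lemma:connected_sum_of_submanifolds}. One application of that lemma merges the two components and preserves the induced homomorphism; a local analysis in a tubular neighborhood of $\gamma$ (the inside-tube-and-chimney region is glued, through the surviving negative-side collars at the endpoints of $\gamma$, to the unchanged part of the old complement) shows the new complement remains connected, so the new system is still independent. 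Iterating $r-1$ times produces a regular and independent system $\N'=(N')$ framed cobordant to $\N$.

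For part~(1), fix an orientation-reversing loop $\lambda\subset M|\N$. Since $\lambda$ is $1$-dimensional and $M|\N$ stays path-connected after removing a regular neighborhood of $\lambda$ (a M\"obius-band neighborhood when $n=2$, codimension $\ge 2$ when $n\ge 3$), we may choose every arc $\gamma$ in the construction disjoint from a fixed regular neighborhood of $\lambda$. The surgeries never touch $\lambda$, so $\lambda\subset M|\N'$ and $M|\N'$ is non-orientable.

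For part~(2), use $\Z/2$-Poincar\'e duality on the closed manifold $M$. Let $w_1^{\mathrm{PD}}\in H_{n-1}(M;\Z/2)$ denote the Poincar\'e dual of $w_1(M)$. Orientability of $M|\N$ means $w_1(M)$ restricts to zero on $M|\N$, so dually $w_1^{\mathrm{PD}}$ is supported in $P(N)$; write $w_1^{\mathrm{PD}}=\sum_i c_i[N_i]$ with $c_i\in\Z/2$. The same duality applied to each $M|\N\cup P(N_i)$ shows that $c_i=1$ is equivalent to $M|\N\cup P(N_i)$ being non-orientable, so under the hypothesis $c_i=1$ for every $i$. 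Therefore $w_1^{\mathrm{PD}}=\sum_i[N_i]=[N]=[N']$ in $H_{n-1}(M;\Z/2)$, the last equality because the connected sums constitute a cobordism. For any loop $\alpha\subset M|\N'$ we have $\alpha\cap N'=\varnothing$, hence $w_1(M)([\alpha])=\alpha\cdot w_1^{\mathrm{PD}}=\alpha\cdot[N']=0$, and $M|\N'$ is orientable. The main obstacle is justifying the equivalence ``$c_i=1\iff M|\N\cup P(N_i)$ non-orientable'', which unpacks the excision/duality identification $\ker(H^1(M)\to H^1(M|\N))\cong H^1(M,M|\N)\cong H_{n-1}(P(N))\cong\bigoplus_i\Z/2$ and its analogue with $M|\N$ replaced by $M|\N\cup P(N_i)$.
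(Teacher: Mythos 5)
Your construction of $\N'$ and your proof of part 1) follow essentially the same route as the paper: merge components along arcs in the connected complement via the connected sum of Lemma \ref{lemma:connected_sum_of_submanifolds}, and for 1) keep all arcs away from a regular neighbourhood of a fixed orientation-reversing loop. For part 2), however, you take a genuinely different route. The paper argues geometrically by contradiction: a putative orientation-reversing loop in $M|\N'$ meets $\N$ an even number of times (only at entrances and exits of the tubes around the connecting arcs), and each crossing of $N_j$ is rerouted through an orientation-reversing loop in $M|\N\cup P(N_j)$ meeting $N_j$ once; after an even number of such reroutings one gets an orientation-reversing loop avoiding $\N$, contradicting orientability of $M|\N$. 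You instead package the obstruction as $w_1(M)$ and its Poincar\'e dual and conclude from $w_1^{\mathrm{PD}}=[N']$ that every loop in $M|\N'$ pairs trivially with $w_1(M)$. This is correct, and in fact stronger: your argument never uses the particular construction of $\N'$, only that $[N']=[N]$ in $H_{n-1}(M;\Z/2)$, so it shows the complement of \emph{any} system framed cobordant to $\N$ is orientable (a fact the paper only extracts later, via Proposition \ref{proposition:not_cobordant_if_one_complement_is_orientable_and_second_not}). One point you flag as the main obstacle should indeed be made explicit, and it is exactly where independence of the system enters: the coefficients $c_i$ are well defined, and ``$c_i=0$'' is equivalent to ``$w_1^{\mathrm{PD}}\in\operatorname{span}\{[N_j]\colon j\neq i\}$'', only because $[N_1],\dots,[N_r]$ are linearly independent in $H_{n-1}(M;\Z/2)$. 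Since $M|\N$ is connected, there are loops $\alpha_i$ with $\alpha_i\cdot N_j=\delta_{ij}$ mod $2$; this proves the linear independence and identifies $c_i=\langle w_1(M),[\alpha_i]\rangle$, after which your equivalence and the rest of the argument go through. In short: the paper avoids homology at the price of delicate parity bookkeeping along the tubes; you trade that for standard $\Z/2$ duality and obtain a cobordism-invariant statement.
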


\begin{proof}
	The construction of $\N'$ is performed as in the proof of Lemma \ref{lemma:connected_sum_of_submanifolds} by using arcs $\gamma$ connecting components of $\N$. They can be found since $\N$ is independent.
	
	Consider a two-sheeted orientation cover $\pi \colon \widetilde{M}\to M$, where 
	$$
	\widetilde{M} := \{\mu_x \,|\, x \in M \text{ and } \mu_x \in H_n(M,M\setminus\{x\}) \text{ is a local orientation at $x$} \}.
	$$
	
	For 1), if $M|\N$ is non-orientable, then there is a loop $\alpha$ in $M|\N$ which reverses the orientation, which means that it lifts to a path in $\widetilde{M}$ which joins two different local orientations at the basepoint. Since $M|\N \setminus \im \alpha$ is connected, we may perform the construction of $\N'$ in this space. Then $\alpha$ is also contained in $M|\N'$, so it is non-orientable.
	
	Now, assume that $M|\N$ is orientable, but $M|\N \cup P(N_i)$ is non-orientable for each~$i$. To obtain a contradiction, suppose that $M|\N'$ is non-orientable, so there is a loop $\alpha$ in $M|\N'$ which reverses the orientation and we may assume that it is in general position to $\N$. Using Lemma \ref{lemma:cutting_loops_into_arcs} write $\alpha = \alpha_1 \ldots \alpha_k$ as a concatenation of paths $\alpha_i$, each of which intersects $\N$ in a single point. Note that since $\alpha$ is in $M|\N'$, it intersects $\N$ only when it goes into or leaves a tubular neighbourhood $P(\gamma)$ of some arc $\gamma$ as mentioned in the beginning of the proof. Therefore, as in the proof of Lemma \ref{lemma:connected_sum_of_submanifolds}, if $\alpha$ intersects $\N$ going inside $P(\gamma)$, then it needs to leave $P(\gamma)$ again intersecting $\N$. Thus $k$ is even.
	
	For any $i$ consider $\alpha_i$ which intersects $N_i$ in a point $x$ and take a small closed disc $D$ in $M$ around $x$ such that the cover $\pi$ is trivial over $D$ and $\partial D \cap \im \alpha_i = \{x_1,x_2\}$, where $x_1$ and $x_2$ lie on the different sides of $N_i$ such that $\alpha_i$ goes from $x_1$ to $x_2$. By the assumption, there is a reversing-orientation loop $\beta_i$ in $M|\N \cup P(N_i)$ intersecting $\N$ only once at $x$ and we may assume that its image agrees with the image of $\alpha_i$ on $D$. We take a loop $\alpha'$ which differs from $\alpha$ only on the segment of $\alpha_i$ between $x_1$ and $x_2$, where it goes as $\beta_i$ outside $D$. Note that the local orientations in $x_2$ assigned by lifts of $\alpha$ and $\alpha'$ are opposed. Repeating this for each $\alpha_i$ we obtain a loop $\alpha''$ which omits $\N$ and which is still orientation-reversing since we changed the local orientations by $\beta_i$ an even number of times. This contradicts the fact that $M|\N$ is orientable and proves 2).
\end{proof}

\begin{remark}
	In fact, in 1) the complement $M|\N'$ is always non-orientable if $M|\N$ is non-orientable. For this, if $\N''$ is any other regular and independent system framed cobordant to $\N$ such that $M|\N''$ is orientable, then it is also framed cobordant to $\N'$, but it is a contradiction by the next proposition. 
\end{remark}

\begin{proposition}\label{proposition:not_cobordant_if_one_complement_is_orientable_and_second_not}
	Let $M$ be a non-orientable manifold and let $\N$ and $\N'$ be two regular and independent systems of hypersurfaces in $M$ of the same size $r$ such that $M|\N$ is orientable, but $M|\N'$ is non-orientable. Then $\N$ and $\N'$ are not framed cobordant.
\end{proposition}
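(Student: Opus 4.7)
My plan is to translate the orientability of the complement of a regular and independent system into a $\Z/2$-homological condition on its components, and then use that framed cobordism preserves this condition. The target equivalence I would like to establish is: $M|\N$ is orientable if and only if $w_1(M) \in \operatorname{span}_{\Z/2}\{[N_1]^*,\ldots,[N_r]^*\}$ inside $H^1(M;\Z/2)$, where $[N_i]^* \in H^1(M;\Z/2)$ denotes the Poincar\'e--Lefschetz $\Z/2$-dual of $[N_i] \in H_{n-1}(M;\Z/2)$. Combined with the analogous equivalence for $\N'$ and the fact that framed cobordism forces $[N_i]=[N'_i]$ in $H_{n-1}(M;\Z/2)$, this will make the orientabilities of $M|\N$ and $M|\N'$ agree, contradicting the hypothesis.

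To prove the equivalence I would start from $w_1(M|\N) = w_1(M)|_{M|\N}$, so that $M|\N$ is orientable if and only if $w_1(M)$ lies in the kernel of the restriction $H^1(M;\Z/2) \to H^1(M|\N;\Z/2)$, equivalently in the image of $H^1(M, M|\N;\Z/2) \to H^1(M;\Z/2)$. Excising the interiors of the product neighbourhoods $P(N_i)$ and using regularity (each $N_i$ is connected) identifies
\[
H^1(M, M|\N;\Z/2) \cong \bigoplus_{i=1}^{r} H^1\!\left(N_i\times[-1,1],\, N_i\times\{-1,1\};\Z/2\right) \cong (\Z/2)^{r},
\]
and each summand is generated by the $\Z/2$ Thom class of the (trivial) normal line bundle of $N_i$, which pushes forward to $[N_i]^* \in H^1(M;\Z/2)$.

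For the cobordism step, suppose $\N$ and $\N'$ were framed cobordant by $\mathcal{W}=(W_1,\ldots,W_r) \subset M\times[0,1]$ with $\partial W_i = N_i\times\{0\}\sqcup N'_i\times\{1\}$. Then the mod-$2$ chain $W_i$ witnesses $j_{0\ast}[N_i]=j_{1\ast}[N'_i]$ in $H_{n-1}(M\times[0,1];\Z/2)$, and since both inclusions $j_0,j_1\colon M\hookrightarrow M\times[0,1]$ induce the same isomorphism on $H_\ast$, it follows that $[N_i]=[N'_i]$ in $H_{n-1}(M;\Z/2)$ and hence $[N_i]^*=[N'_i]^*$ for each $i$. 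The two $\Z/2$-spans therefore coincide, yielding the contradiction. The main technical delicacy will be identifying the excision generators with the Poincar\'e--Lefschetz duals over a non-orientable $M$; this is resolved by working with $\Z/2$ coefficients throughout and using that each $N_i$ is $2$-sided, so its normal bundle is $\Z/2$-oriented and the Thom isomorphism applies with no orientability assumption on $M$ or on $N_i$ itself.
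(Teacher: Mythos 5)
Your proof is correct, and it takes a genuinely different route from the paper's. The paper first reduces to the case $r=1$ (by passing to the subsystem of those $N_i$ for which $M|\N\cup P(N_i)$ is non-orientable and invoking Lemma \ref{lemma:changing_system_to_regular_and_the_same_orientantion_of_the_complement}), and then argues via the orientation double cover $\pi\colon\widetilde{M}\to M$: the preimage $\widetilde{M}|\pi^{-1}(\N)$ is disconnected while $\widetilde{M}|\pi^{-1}(\N')$ is connected, a lifted framed cobordism forces $\varphi_{\pi^{-1}(\N)}=\varphi_{\pi^{-1}(\N')}\colon\pi_1(\widetilde{M})\to\Z$, and Lemma \ref{lemma:when_non_independent_system_induces_no_surjection} shows one of these is surjective and the other is not. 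Your argument replaces all of this with the characteristic-class identity that $M|\N$ is orientable precisely when $w_1(M)$ lies in the $\Z/2$-span of the duals $[N_i]^*$, together with the cobordism invariance of $[N_i]\in H_{n-1}(M;\Z/2)$; all the ingredients (restriction of $w_1$ to a codimension-zero submanifold, excision plus the mod-$2$ Thom isomorphism for the $2$-sided $N_i$, and the boundary map killing $[N_i]+[N'_i]$) are standard and correctly deployed. What your approach buys: it avoids the reduction to $r=1$ and the two auxiliary lemmas, it only uses that each $N_i$ is cobordant to $N'_i$ mod $2$ as an individual submanifold (the framing plays no role), and it does not actually use independence of the systems --- only regularity, which is where the argument would break (and where the paper's subsequent remark says the statement genuinely fails), since for disconnected $N_i$ the relevant span is generated by the duals of the individual components and these are not controlled by a cobordism of $N_i$ as a whole. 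What the paper's approach buys is that it stays entirely within the covering-space and induced-epimorphism framework already built up in Section \ref{section:systems_of_hypersurfaces}, which is also reused later (e.g.\ in the proof of Lemma \ref{lemma:H1_H3_do_not_change_orientability}).
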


\begin{proof}
	Let $\N=(N_1,\ldots,N_r)$ and $\N' = (N'_1,\ldots,N'_r)$. It is clear that we may assume that $\N$ satisfies the conditions in Lemma \ref{lemma:changing_system_to_regular_and_the_same_orientantion_of_the_complement} 2) since a framed cobordism between $\N$ and $\N'$ implies a framed cobordism between the systems $\N_* = (N_{i_1},\ldots,N_{i_k})$ and $\N'_*=(N'_{i_1},\ldots,N'_{i_k})$, where  $i_1 < \ldots <i_k$ are all indices such that $M|\N \cup P(N_{i_j})$ is non-orientable. We will show that $\N_*$ and $\N'_*$ are not framed cobordant even as submanifolds, not as systems of hypersurfaces. For this we  use Lemma \ref{lemma:changing_system_to_regular_and_the_same_orientantion_of_the_complement} for $(N_{i_1} \cup \ldots \cup N_{i_k})$ and $(N'_{i_1} \cup \ldots \cup N'_{i_k})$ to assume that $r=1$.
	
	So now, each of $\N$ and $\N'$ is just a non-separating connected $2$-sided submanifold in $M$, $M|\N$ is orientable and $M|\N'$ is non-orientable. Suppose that $W \subset M\times[0,1]$ is a framed cobordism between $\N$ and $\N'$. Take the orientation cover $\pi\colon\widetilde{M}\to M$ and take the lifts $\widetilde{\N} := \pi^{-1}(\N)$ and  $\widetilde{\N'} := \pi^{-1}(\N')$. Moreover, by the property of $\pi$ the complement $\widetilde{M} | \widetilde{\N}$ has two connected components since $M|\N$ is orientable, and $\widetilde{M} | \widetilde{\N'}$ is connected since $M|\N'$ is non-orientable. The cobordism $W$ is lifted to the framed cobordism $\widetilde{W} := (\pi \times \id_{[0,1]})^{-1}(W) \subset \widetilde{M} \times [0,1]$ between $\widetilde{\N}$ and $\widetilde{\N'}$. Therefore $\varphi_{\widetilde{\N}} = \varphi_{\widetilde{\N'}} \colon \pi_1(\widetilde{M}) \to \Z$ and $\varphi_{\widetilde{\N'}}$ is surjective, because $\widetilde{\N'}$ is independent. However, $\varphi_{\widetilde{\N}}$ is not surjective, which gives a contradiction.
	
	To see this, note that $\widetilde{\N}$ can have one or two components. If $\widetilde{\N}$ is connected, then $\varphi_{\widetilde{\N}}$ is evidently not surjective, since $\widetilde{M} | \widetilde{\N}$ is not connected. In the second case when $\widetilde{\N}$ has two components, we use Lemma \ref{lemma:when_non_independent_system_induces_no_surjection} together with the fact that $\widetilde{M} | \widetilde{\N} \to M|\N$,  the restriction of $\pi$, is also the orientation cover of $M|\N$.
	
	Thus $\N$ and $\N'$ are not framed cobordant.
\end{proof}

\begin{remark}
	The above proposition is easily not true for not regular systems of hypersurfaces. To construct an example, it suffices to take a system $\N = (N_1 \cup N_2)$ of size $1$ consisting of two non-separating framed circles in $M = \Sigma_1 \# S_2$, the connected sum of the torus and Klein bottle, such that $N_1 \subset \Sigma_1$ and $N_2 \subset S_2$ are disjoint from the discs used in the connected sum operation. Since $N_2$ has a trivial normal bundle and $S_2 | N_2$ is orientable, there is an orientation-reversing loop $\beta$ in $S_2$ intersecting $N_2$ in a single point. Obviously, there is also an orientation-preserving loop $\alpha$ in $\Sigma_1$ intersecting $N_1$ in a single point. Moreover, one can take an arc $\gamma$ in $M$ joining $\alpha \cap N_1$ and $\beta \cap N_2$ and disjoint from $\alpha$ and $\beta$ outside these points. Performing the connected sum of $N_1$ and $N_2$ along $\gamma$ we obtain $\N' = (N')$ such that $\varphi_\N = \varphi_{\N'}$ by Lemma \ref{lemma:connected_sum_of_submanifolds}, so $\N$ and $\N'$ are framed cobordant. However, $M|\N$ is orientable, but $M|\N'$ is non-orientable because the concatenation $\alpha \cdot \gamma \cdot  \beta$ is an orientation-reversing loop which can be homotoped to lie in $M|\N'$.
\end{remark}

Now, we can provide an alternative proof of Grigorchuk--Kurchanov--Zieschang Theorem (Theorem \ref{thm:grigorchuk}) for the strong equivalence relation. First, let us make a short preparation.

Let $\N=(N_1,\ldots,N_r)$ and $\N'=(N'_1,\ldots,N'_r)$ be two arbitrary regular and independent systems of hypersurfaces in a closed surface $\Sigma$. Thus all $N_i$, $N'_i$ are circles. Assume that $\Sigma|\N$ and $\Sigma|\N'$ are diffeomorphic.  By homogeneity of manifolds, take a diffeomorphism $h' \colon \Sigma|\N \to \Sigma|\N'$ which sends $P_{\pm 1}(N_i)$ onto $P_{\pm 1}(N'_i)$. Glue all tubes $P(N_i) \cong [-1,1]\times N_i$ to $\Sigma|\N$ along $\{-1\} \times N_i$ to obtain a surface $\overline{\Sigma}$ with $2r$ boundary components $\{1\}\times N_i$ and $P_1(N_i)$, $i=1,\ldots,r$. Let 
$$
\xi_i \colon  P_{1}(N_i) \to \{1\} \times N_i 
$$
be a gluing map which leads to $\Sigma$. Analogously, we define $\overline{\Sigma'}$ and take $\xi'_i$ for $\N'$. Extend $h'$ to a diffeomorphism $\bar{h} \colon \overline{\Sigma} \to \overline{\Sigma'}$ using $P(N_i) \cong [-1,1] \times \es^1 \cong P(N'_i)$, so $\bar{h}(\N)=\N'$. It follows easily that $\bar{h}$ induces $h \in \diffd(\Sigma)$ after performing gluing operations via $\xi_i$ and $\xi'_i$  if and only if $\bar{h}^{-1}\circ \xi'_i \circ \bar{h}|_{P_1(N_i)}$ is isotopic to $\xi_i$ for each $i=1,\ldots,r$. If it is the case, then  $h(\N) = \N'$, so $\N$ and $\N'$ are the same elements in $\H^{fr}_r(\Sigma)/_{\diffd(\Sigma)}$.

\begin{theorem}\label{theorem:calculations_of_cobordisms_for_surfaces}
	Let $\Sigma$ be a closed surface, let $r$ be an integer such that $1 \leq r \leq \corank(\pi_1(\Sigma))$ and set $q = \left|\H^{fr}_r(\Sigma)/_{\diffd(\Sigma)}\right|$.
	\begin{enumerate}[(1)]
		\item If $\Sigma$ is orientable or non-orientable of odd genus, then $q=1$. 
		\item If $\Sigma=S_{2m}$ is non-orientable of genus $2m$, then
		\begin{itemize}
			\item if $r<m$, then $q= 2^r$,
			\item if $r=m$, then $q= 2^r-1$.
		\end{itemize}
	\end{enumerate}
	As a consequence, $q$ is the number of strong equivalence classes of epimorphisms $\pi_1(\Sigma)\to F_r$ as in Theorem~\ref{thm:grigorchuk}.
\end{theorem}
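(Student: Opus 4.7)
The plan is to classify elements of $\H^{fr}_r(\Sigma)/_{\diffd(\Sigma)}$ by attaching to each regular and independent system $\N=(N_1,\ldots,N_r)$ two pieces of data: (i) the orientability type of the complement $\Sigma|\N$, and (ii), when $\Sigma|\N$ is orientable, an ordered tuple of signs $(\epsilon_1,\ldots,\epsilon_r)\in\{\pm 1\}^r$, where $\epsilon_i$ records whether the gluing of $P(N_i)$ back to $\Sigma|\N$ is orientation-compatible (equivalently, whether $w_1(\Sigma)$, which factors through $\varphi_\N$ in this case, sends the $i$-th generator of $F_r$ to the trivial or non-trivial element of $\Z/2$). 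First I would determine which of these data can actually occur. Since $\chi(\Sigma|\N)=\chi(\Sigma)=2-k$ and $\Sigma|\N$ has $2r$ boundary circles, the classification of compact surfaces forces it to be orientable of genus $(k-2r)/2$ (which requires $k$ even) or non-orientable of genus $k-2r$. Consequently, for $\Sigma=\Sigma_g$ the complement is always orientable; for $\Sigma=S_{2m+1}$ it is always non-orientable; and for $\Sigma=S_{2m}$ both types occur, except that the non-orientable type is excluded when $r=m$ (non-orientable genus zero does not exist).

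Next I would check that these data are $\diffd(\Sigma)$-invariants. Orientability of $\Sigma|\N$ is preserved even under framed cobordism by Proposition~\ref{proposition:not_cobordant_if_one_complement_is_orientable_and_second_not}, while each $\epsilon_i$ is preserved by any $h\in\diffd(\Sigma)$, since $h^*w_1(\Sigma)=w_1(\Sigma)$ and so the factorization $\lambda\colon F_r\to\Z/2$ of $w_1$ through $\varphi_\N$ is unchanged on generators. The heart of the proof is the converse: two systems with matching data lie in the same $\diffd(\Sigma)$-orbit. Following the discussion immediately preceding the theorem, this reduces to producing a diffeomorphism $\overline h\colon \overline\Sigma\to\overline{\Sigma'}$ which pairs the boundary circles in the prescribed ordering and arranges that the conjugated gluing maps $\overline h^{-1}\circ\xi'_i\circ\overline h|_{P_1(N_i)}$ are isotopic to $\xi_i$. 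An abstract diffeomorphism between the two complements is supplied by surface classification; the correct ordered pairing of boundary circles is then achieved by composing with mapping classes of the complement; and, when the complement is non-orientable, Lemma~\ref{lemma:self_map_of_non_orientable_surface_which_reverse_orientation_on_boundary} provides a self-diffeomorphism reversing orientation on any single boundary circle, absorbing any discrepancy in $\epsilon_i$ and thereby showing that $(\epsilon_i)$ contributes no genuine invariant in the non-orientable complement case.

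The count then proceeds case by case. For $\Sigma=\Sigma_g$ the unique orientable complement forces all $\epsilon_i=+1$, giving $q=1$. For $\Sigma=S_{2m+1}$ the unique non-orientable complement absorbs every sign tuple, again $q=1$. For $\Sigma=S_{2m}$ with $r=m$ only the orientable complement occurs, and the sign tuple is a genuine ordered invariant; the choice $(+1,\ldots,+1)$ would recover the orientable $\Sigma_m$ rather than $S_{2m}$, so exactly $2^r-1$ tuples are realized. For $\Sigma=S_{2m}$ with $r<m$ the orientable complement contributes $2^r-1$ classes by the same argument, and the non-orientable complement contributes one further class after absorption, for a total $q=2^r$. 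The main obstacle will be the uniqueness step showing that matching data forces a common orbit; it requires exhibiting sufficiently many mapping classes of the complement to realize the needed permutations of boundary-circle pairs within each sign class, together with the boundary-orientation reversals of Lemma~\ref{lemma:self_map_of_non_orientable_surface_which_reverse_orientation_on_boundary} in the non-orientable case. Finally, the bijection $\overline{\overline\Theta}$ of Proposition~\ref{proposition:bijection_for_surfaces_epi_and_systems} transfers $q$ to the number of strong equivalence classes of epimorphisms $\pi_1(\Sigma)\to F_r$, matching Theorem~\ref{thm:grigorchuk}.
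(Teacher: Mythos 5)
Your proposal is correct and follows the same overall architecture as the paper's proof: the reduction to the boundary-gluing discussion preceding the theorem, the use of Lemma \ref{lemma:self_map_of_non_orientable_surface_which_reverse_orientation_on_boundary} to absorb sign discrepancies when the complement is non-orientable, the Euler-characteristic analysis of which complements occur, and the final transfer via Proposition \ref{proposition:bijection_for_surfaces_epi_and_systems}. The one genuinely different ingredient is how you distinguish the classes with orientable complement but different sign tuples. The paper does this by passing to the sub-systems $\N_I^*$ and $\N_J^*$ indexed by $I$ and invoking Proposition \ref{proposition:not_cobordant_if_one_complement_is_orientable_and_second_not} to show they are not framed cobordant. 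You instead observe that when $\Sigma|\N$ is orientable the first Stiefel--Whitney class factors as $w_1(\Sigma)=\lambda\circ\varphi_\N$ with $\lambda(a_i)=\epsilon_i$, and since $\lambda$ is determined by $\varphi_\N$ (surjectivity) and $h^*w_1(\Sigma)=w_1(\Sigma)$ for $h\in\diffd(\Sigma)$, the tuple $(\epsilon_i)$ is manifestly invariant under both framed cobordism and the $\diffd(\Sigma)$-action. This is cleaner and more conceptual, and it also makes transparent why the all-plus tuple is excluded (it would force $w_1(\Sigma)=0$). Two small points to tighten: you should verify explicitly that $w_1=\lambda\circ\varphi_\N$ holds (it does, by transporting an orientation of $\Sigma|\N$ along the arcs of Lemma \ref{lemma:cutting_loops_into_arcs}, each crossing of $N_i$ contributing $\epsilon_i$), and the realization of every admissible datum by an actual system is asserted rather than constructed --- though the paper is equally brief on that point.
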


\begin{proof}

	We use the above notation. If $\Sigma$ is orientable, then $\Sigma|\N$ and $\Sigma|\N'$ are diffeomorphic surfaces and we may assume that the diffeomorphism $h'$ is orientation-preserving. Since $\Sigma$ is orientable, all maps $\xi_i$ and $\xi'_i$ are also orientation-preserving, so we obtain $h\in\diffd(\Sigma)$ such that $h(N_i) = N'_i$. Therefore $q=1$.

	
	Now assume that $\Sigma$ is non-orientable of odd genus. Then $\Sigma|\N$ and $\Sigma|\N'$ are compact surfaces with $2r$ boundary components and of the same odd Euler characteristic, so they are also non-orientable. Using Lemma \ref{lemma:self_map_of_non_orientable_surface_which_reverse_orientation_on_boundary} we may change $h'$, by the composition with another diffeomorphism, so that $\bar{h}^{-1}\circ \xi'_i \circ \bar{h}|_{P_1(N_i)}$ and $\xi_i$ are isotopic. As before, it implies that $q=1$.

	
	Finally, let $\Sigma = S_{2m}$ be non-orientable of even genus $2m$.
	For any non-empty subset $I \subset \{1,\ldots,r\}$ it is easy to construct a system $\N_I$ such that $\Sigma|\N_I$ is orientable and gluing maps $\xi_i^I$ (defined as before) are orientation-reversing only for $i\in I$. We omit the case when $I=\varnothing$ since then $\Sigma$ would be orientable. Moreover, for $r<m$ we denote by $\N_0$ a system for which $\Sigma|\N_0$ is non-orientable. Note that if $r=m$, then $\Sigma|\N$ is always the sphere with $2r$ open discs removed, so it is orientable. 
	
	By the previous considerations it is clear that the systems $\N_I$ for $\varnothing \neq I \subset \{1,\ldots,r\}$ and $\N_0$ for $r<m$ represent all elements of $\H^{fr}_r(\Sigma)/_{\diffd(\Sigma)}$ (for the case when $\Sigma|\N$ is non-orientable we use Lemma \ref{lemma:self_map_of_non_orientable_surface_which_reverse_orientation_on_boundary} as before). Thus $q \leq 2^r$  for $r<m$ and $q \leq 2^r-1$ for $r=m$. We will show that they are different elements of $\H^{fr}_r(\Sigma)/_{\diffd(\Sigma)}$. It will be done if we show that the systems are not framed cobordant to each other. 
	
	By Proposition \ref{proposition:not_cobordant_if_one_complement_is_orientable_and_second_not} we know that $\N_0$ is not framed cobordant to any $\N_I$. If we have two systems $\N_I=(N^I_1,\ldots,N^I_r)$ and $\N_J=(N^J_1,\ldots,N^J_r)$ for $I\neq J$, then we may assume that there is an index $1 \leq j \leq r$ such that $j \notin I$, but $j\in J$, so $\xi_j^I$ is orientation-preserving, but $\xi_j^J$ is orientation-reversing. If $I=\{i_1,\ldots,i_k\}$, form the systems $\N_I^* = (N^I_{i_1}, \ldots, N^I_{i_k})$ and $\N_J^* = (N^J_{i_1}, \ldots, N^J_{i_k})$. By the construction, $\Sigma|\N_I^*$ is orientable, but $\Sigma|\N_J^*$ is non-orientable. Again by Proposition \ref{proposition:not_cobordant_if_one_complement_is_orientable_and_second_not} we get that $\N_I^*$ and $\N_J^*$ are not framed cobordant, so also $\N_I$ and $\N_J$ cannot be framed cobordant and the proof is complete.
	
	The last statement follows by Proposition \ref{proposition:bijection_for_surfaces_epi_and_systems}.
\end{proof}

\begin{corollary}
	With the above notation, 		
	\begin{equation*}
		\mathcal{H}^{fr}_r(S_{2m})/_{\diffd(S_{2m})} =  \begin{cases}
			\left\{[\N_0],[\N_I]\ \colon\ \varnothing \neq I \subset \{1,\ldots,r\}\right\}	        &\text{\ \ for $r<m$,}\\
			\{[\N_I]\ \colon\ \varnothing \neq I \subset \{1,\ldots,r\}\}	        &\text{\ \ for $r=m$.}
		\end{cases}
	\end{equation*}\qed
\end{corollary}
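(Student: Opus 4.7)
The plan is to observe that this corollary is essentially a repackaging of the explicit enumeration already carried out in the proof of Theorem \ref{theorem:calculations_of_cobordisms_for_surfaces}, so the work reduces to recollecting the systems produced there and checking that they exhaust the orbit set.

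First, I would recall the explicit constructions from the proof of the theorem: for each non-empty $I \subset \{1,\ldots,r\}$ one builds a regular and independent system $\N_I$ of size $r$ in $S_{2m}$ whose complement $S_{2m}|\N_I$ is orientable and whose gluing maps $\xi_i^I$ are orientation-reversing exactly for $i \in I$; and for $r<m$ one additionally constructs a system $\N_0$ with non-orientable complement. I would then invoke the covering/gluing argument preceding Theorem \ref{theorem:calculations_of_cobordisms_for_surfaces} (using Lemma \ref{lemma:self_map_of_non_orientable_surface_which_reverse_orientation_on_boundary} to adjust diffeomorphisms on non-orientable pieces) to conclude that every class in $\mathcal{H}^{fr}_r(S_{2m})/_{\diffd(S_{2m})}$ is represented by one of these $\N_I$ or by $\N_0$. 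The case distinction $r<m$ versus $r=m$ enters because when $r=m$ the complement $S_{2m}|\N$ is forced to be the sphere with $2r$ open discs removed, hence always orientable, ruling out any representative of type $\N_0$.

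Next, I would record the distinguishing step, which is already done inside the theorem's proof: Proposition \ref{proposition:not_cobordant_if_one_complement_is_orientable_and_second_not} separates $\N_0$ from every $\N_I$ since their complements have opposite orientability, and for $I \neq J$ one extracts an index $j$ lying in the symmetric difference and passes to the subsystems $\N_I^\ast$ and $\N_J^\ast$ indexed by $I$, for which one complement is orientable and the other is not, again by Proposition \ref{proposition:not_cobordant_if_one_complement_is_orientable_and_second_not}. This rules out a framed cobordism between $\N_I$ and $\N_J$, and a fortiori no element of $\diffd(S_{2m})$ can carry one to the other.

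The only real thing to verify carefully is that the indexing by subsets $I$ in the corollary matches the parameters used when counting orbits in Theorem \ref{theorem:calculations_of_cobordisms_for_surfaces}, namely that the $2^r$ (respectively $2^r-1$) classes enumerated there correspond bijectively to the subsets $I \subset \{1,\ldots,r\}$ (non-empty when $r=m$, arbitrary together with $\N_0$ when $r<m$); this is immediate from the construction of $\N_I$ in terms of which gluing maps $\xi_i^I$ are orientation-reversing. I do not anticipate a genuine obstacle here: the whole statement is a bookkeeping corollary of the theorem's proof, and no new geometric or algebraic input is needed.
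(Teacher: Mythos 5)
Your proposal is correct and matches the paper's treatment: the corollary is stated with no separate proof precisely because it is a direct repackaging of what the proof of Theorem \ref{theorem:calculations_of_cobordisms_for_surfaces} establishes, namely that the systems $\N_I$ (and $\N_0$ when $r<m$) represent every orbit and are pairwise non-cobordant by Proposition \ref{proposition:not_cobordant_if_one_complement_is_orientable_and_second_not}. No new input is needed, exactly as you observe.
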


\subsection{Analogue of Nielsen transformations for systems of hypersurfaces}

We have found out that strong equivalence classes of epimorphisms $\pi_1(M) \to F_r$ can be described by elements of $\H^{fr}_r(M)/_{\diffd(M)}$. In this section we show how to get equivalence classes from them.

It is known that the automorphism group $\operatorname{Aut}(F_r)$ of a finitely generated free group $F_r$ is generated by \emph{elementary Nielsen transformations} (see e.g.~\cite{Bogopolski}). On a given ordered basis $(a_1,\ldots,a_r)$ we define them as follows:
\begin{enumerate}[(T1)]
	\item $n_\sigma \colon (a_1,\ldots,a_r) \mapsto (a_{\sigma(1)},\ldots,a_{\sigma(r)})$ for some permutation $\sigma \in S_r$;
	\item $n_i \colon (a_1,\ldots,a_r) \mapsto (a_1,\ldots,a_{i-1},a^{-1}_i, a_{i+1},\ldots,a_r)$ for $i \in \{1,\ldots,r\}$;
	\item $n_{ij}\colon (a_1,\ldots,a_r) \mapsto (a_1,\ldots, a_{i-1},a_ia_j, a_{i+1}, \ldots,a_r)$ which replaces $a_i$ by $a_ia_j$ for some $i\neq j$.
\end{enumerate}

Note that the transformation (T1) can be obtained from the other two transformations, but it is convenient to use. Thus we have three types of automorphisms: $n_\sigma, n_i, n_{ij} \in \aut(F_r)$.

\begin{definition}\label{nielsen_transformations_for_systems}
	Let $\N=(N_1,\ldots,N_r)$ be an independent and regular system of hypersurfaces in a closed manifold $M$. We define analogous operations on $\H_r(M)$:
	
	\begin{enumerate}[(H1)]
		\item $\N \mapsto \N^\sigma := (N_{\sigma(1)},\ldots,N_{\sigma(r)})$ for some permutation $\sigma \in S_r$;
		\item $\N \mapsto \N^i$ is obtained by changing the framing of the submanifold $N_i$ to the one with opposite orientation;
		\item $\N \mapsto \N^{ij}$ is obtained for $i\neq j$ by replacing $N_j$ by $N_j \#_\gamma P_1(N_i)$, where $\gamma$ is an arc as in Lemma \ref{lemma:connected_sum_of_submanifolds} which intersects $\N$ only in two points and joins $N_j$ and $P_1(N_i)$ from the same side.
	\end{enumerate}
\end{definition}

An arc $\gamma$ in (H3) always exists since $\N$ is independent. Then for the obtained system $\N^{ij}$ we take smaller tubular neighbourhoods to be disjoint, e.g. $P_{[-1,\frac{1}{2}]}(N_i) \cong [-1,\frac{1}{2}]\times N_i$. By Lemma \ref{lemma:connected_sum_of_submanifolds} the homomorphism $\varphi_{\N^{ij}}$ is the same as induced by the system $(N_1,\ldots,N_i,\ldots,N_j\cup P_1(N_i),\ldots,N_r)$, so it is clear by the definition that $\varphi_{\N^{ij}} = n_{ij} \circ \varphi_\N$. Therefore $\varphi_{\N^{ij}}$ is surjective and since obviously $\N^{ij}$ is regular, by Proposition \ref{proposition:epi_and_regular_is_independent} it is also independent, so the operation (H3) on $\H_r(M)$ is well defined. It does not depend on the choice of $\gamma$ up to framed cobordism.

\begin{figure}[h]
	\centering

	\begin{tikzpicture}[scale=1]
		\node at (0,0) {\includegraphics[width=215pt]{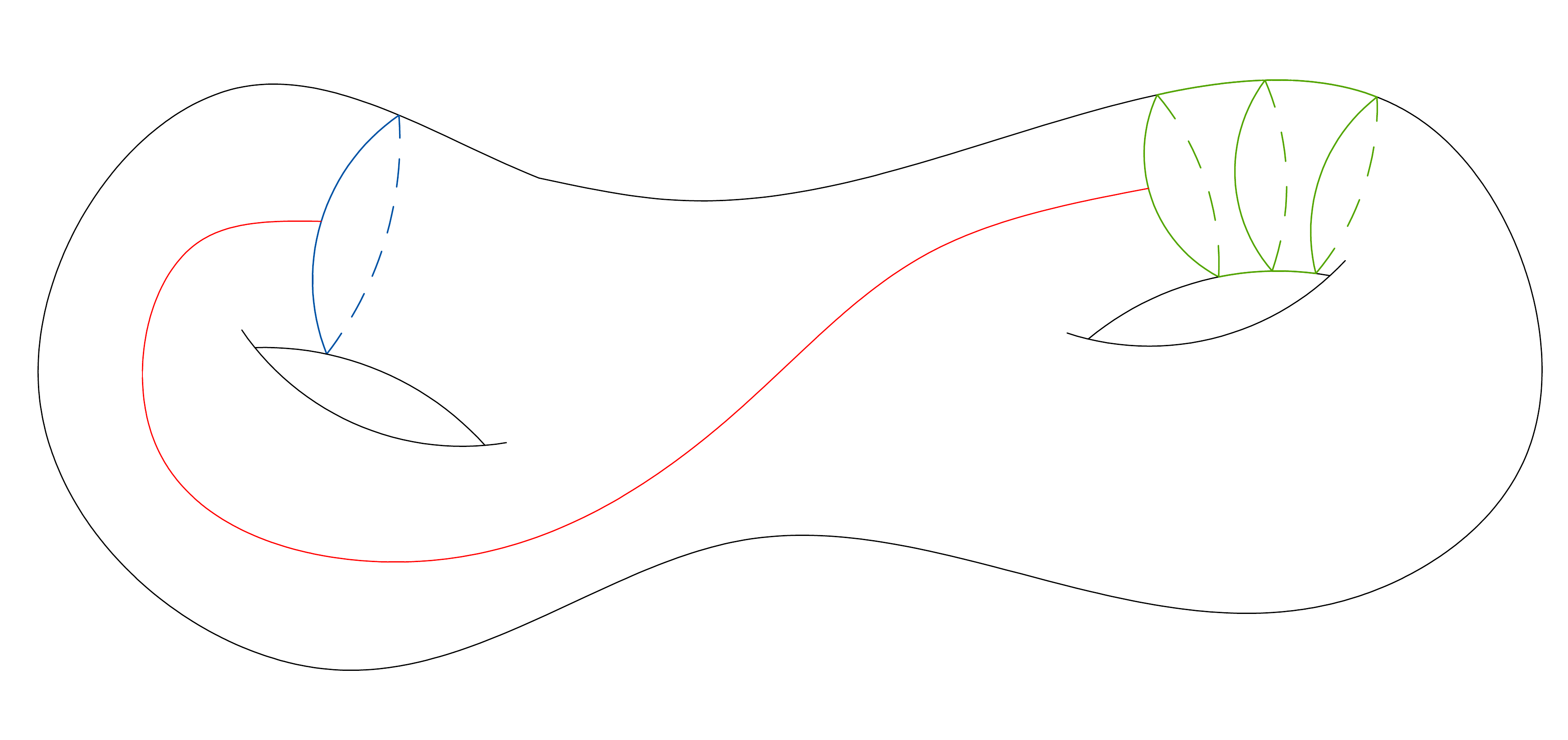}};
		
		\draw (-1.65,0.52) node {$N_j$};
		\draw (2.35,1.7) node {$N_i$}; 
		\draw (0.7,2) node {$P_1(N_i)$}; 
		\draw (0,-0.3) node {$\gamma$}; 
		
		\draw [->] (-2.155,0.9) to (-2.61,1.07); 
		\draw [->] (1.74,1.1) to (1.3,1); 
		\draw [dashed,->] (0.8,1.8) to (1.745,1.25);

		\draw (0.3,-1.8) node {$\N=(N_j,N_i)$}; 
		
		

		\node at (8.1,0) {\includegraphics[width=215pt]{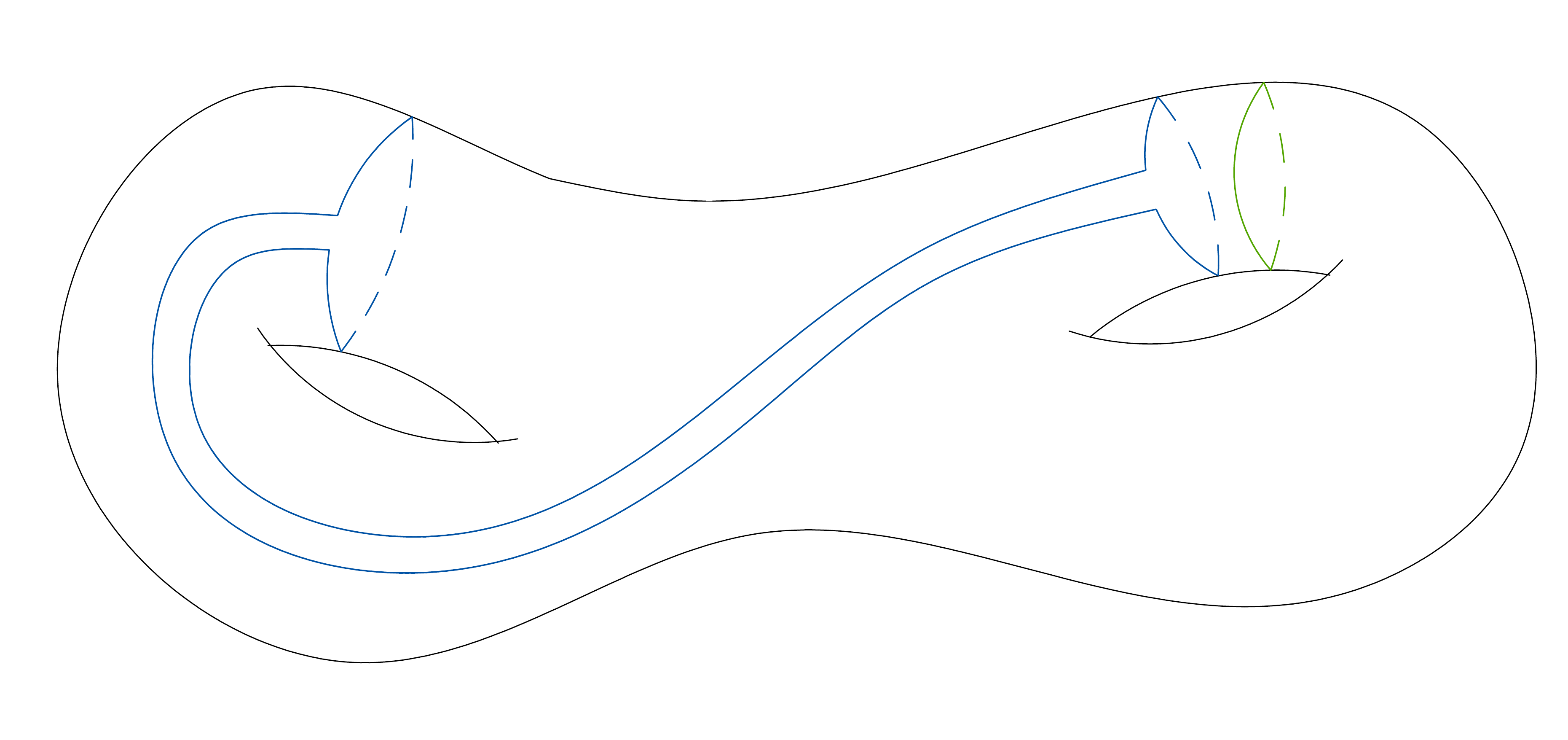}};

		\draw [->] (6.06,0.98) to (5.7,1.16); 
		\draw [->] (9.845,1.1) to (9.4,1); 
		\draw [->] (8.85,0.44) to (9.1,0.15); 
		\draw [->] (7.67,-0.23) to (7.37,0.1); 	
		
		\draw (9.0,-0.4) node {$N_j \#_\gamma P_1(N_i)$}; 
		\draw (10.8,0.9) node {$N_i$}; 
		
		\draw (8.4,-1.8) node {$\N^{ij}=(N_j\#_\gamma P_1(N_i),N_i)$}; 
		

	\end{tikzpicture}
	
	\caption{Operation (H3) which transforms $\N=(N_j,N_i)$ into $\N^{ij} =(N_j\#_\gamma P_1(N_i),N_i)$. }\label{figure:H3_operation}
	
\end{figure}

In the same way operations (H1) and (H2) are analogues of (T1) and (T2): 
$$
\varphi_{\N^\sigma} = n_\sigma \circ \varphi_\N \text{\ \ \  and \ \ \ } \varphi_{\N^i} = n_i \circ \varphi_\N.
$$

Since elementary Nielsen transformations generate $\aut(F_r)$, we have the following straightforward \mbox{conclusion.}

\begin{proposition}\label{propostion:equivalence_as_strong_equivalence_plus_operations_H1-H3}
	Two epimorphisms $\varphi_\N$ and $\varphi_{\N'}$ induced by $\N,\N' \in \H_r(M)$ are equivalent if and only if $\N'$ can be transformed by using a finite number of operations (H1) -- (H3) to a system $\N''$ such that $\varphi_\N$ and $\varphi_{\N''}$ are strongly equivalent. \qed
\end{proposition}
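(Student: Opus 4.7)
The plan is to translate directly between the algebraic structure of $\aut(F_r)$ and the geometric operations on $\H_r(M)$, using the three identities $\varphi_{\N^\sigma} = n_\sigma \circ \varphi_\N$, $\varphi_{\N^i} = n_i \circ \varphi_\N$, and $\varphi_{\N^{ij}} = n_{ij} \circ \varphi_\N$ noted just before the statement, combined with the fact that the elementary Nielsen transformations generate $\aut(F_r)$.

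For the ``if'' direction I would argue as follows. Suppose $\N'$ is transformed into $\N''$ by a finite sequence of operations (H1)--(H3), and that $\varphi_\N \simeq \varphi_{\N''}$. Each step of the sequence multiplies the induced epimorphism on the left by one of $n_\sigma$, $n_i$, or $n_{ij}$, so composing these in order produces some $\eta \in \aut(F_r)$ with $\varphi_{\N''} = \eta \circ \varphi_{\N'}$. Strong equivalence supplies $\nu \in \aut(\pi_1(M))$ with $\varphi_\N \circ \nu = \varphi_{\N''}$, and combining gives $\varphi_\N \circ \nu = \eta \circ \varphi_{\N'}$, i.e.\ $\varphi_\N \sim \varphi_{\N'}$.

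For the ``only if'' direction, assume $\varphi_\N \sim \varphi_{\N'}$, so there exist $\nu \in \aut(\pi_1(M))$ and $\eta \in \aut(F_r)$ with $\varphi_\N \circ \nu = \eta \circ \varphi_{\N'}$. Decompose $\eta = \tau_k \circ \cdots \circ \tau_1$ into elementary Nielsen transformations and apply the corresponding H-operations in order to $\N'$, obtaining a sequence of systems $\N' = \N_0, \N_1, \ldots, \N_k =: \N''$ in $\H_r(M)$ with $\varphi_{\N_l} = \tau_l \circ \varphi_{\N_{l-1}}$ at every step. By induction $\varphi_{\N''} = \eta \circ \varphi_{\N'}$, hence $\varphi_\N \circ \nu = \varphi_{\N''}$, which is exactly the statement that $\varphi_\N \simeq \varphi_{\N''}$.

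The only genuinely non-trivial point, and the one I would be most careful about, is that the intermediate systems $\N_l$ all lie in $\H_r(M)$, so that each successive H-operation is actually defined. For (H1) and (H2) this is immediate from the definitions, and for (H3) it follows from the observation made just after Definition \ref{nielsen_transformations_for_systems}: regularity is obvious from the construction, while independence is preserved because the transformed system still induces a surjection (its image of a suitable loop is $n_{ij}(\varphi_\N(\cdot))$), so Proposition \ref{proposition:epi_and_regular_is_independent} applies. With this minor verification the proposition follows formally from the preceding setup.
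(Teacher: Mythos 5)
Your argument is correct and is precisely the reasoning the paper has in mind: the proposition is stated there with a \qed as a ``straightforward conclusion'' of the identities $\varphi_{\N^\sigma}=n_\sigma\circ\varphi_\N$, $\varphi_{\N^i}=n_i\circ\varphi_\N$, $\varphi_{\N^{ij}}=n_{ij}\circ\varphi_\N$ and the fact that elementary Nielsen transformations generate $\aut(F_r)$, and the well-definedness of (H3) on $\H_r(M)$ that you single out is exactly the point the authors verify just after Definition~\ref{nielsen_transformations_for_systems}. Your write-up simply makes this explicit, so it matches the paper's approach.
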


In particular, if $M$ is a manifold for which the map 
$$
\overline{\overline{\Theta}} \colon \H^{fr}_r(M)/_{\diffd(M)} \to \epi(\pi_1(M),F_r)/_\simeq
$$ is a~bijection, then $\varphi_\N$ and $\varphi_{\N'}$ are equivalent if and only if $\N''$ can be obtained to represent the same element of $\H^{fr}_r(M)/_{\diffd(M)}$ as~$\N$.

\begin{lemma}\label{lemma:H1_H3_do_not_change_orientability}
	The operations (H1)--(H3) on $\N$ do not change the orientability of $M|\N$.
\end{lemma}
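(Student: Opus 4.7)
The plan is to handle the three operations separately. For (H1) the submanifolds are merely permuted, so the underlying set $\bigcup_i N_i \subset M$ is unchanged and $M|\N^\sigma = M|\N$ literally. For (H2) only the framing on $N_i$ is reversed, so the submanifold itself is the same and $M|\N^i = M|\N$ as subsets of $M$ (up to the inessential choice of product neighbourhood compatible with a framing and its negative). In both cases the orientability question concerns literally the same manifold.

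The substantive case is (H3), where $N_j$ is replaced by $N_j \#_\gamma P_1(N_i)$ and the complement genuinely changes. The crucial algebraic fact, already noted in the definition of the operation, is $\varphi_{\N^{ij}} = n_{ij} \circ \varphi_\N$, and since $n_{ij}$ is an automorphism of $F_r$ this gives $\ker \varphi_\N = \ker \varphi_{\N^{ij}}$. My strategy is to reduce orientability to a condition depending only on this kernel. Indeed, $M|\N \hookrightarrow M$ is an inclusion of codimension-zero submanifolds, so the tangent bundle of $M|\N$ is the restriction $TM|_{M|\N}$; hence $M|\N$ is orientable if and only if the first Stiefel--Whitney class $w_1(M) \in \operatorname{Hom}(\pi_1(M),\Z/2)$ vanishes on the image of $\pi_1(M|\N) \to \pi_1(M)$.

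It therefore suffices to prove the identification $\im\bigl(\pi_1(M|\mathcal{M}) \to \pi_1(M)\bigr) = \ker \varphi_\mathcal{M}$ for any regular and independent system $\mathcal{M}$. The inclusion $\subset$ is immediate because the classifying map $f_\mathcal{M}$ sends all of $M|\mathcal{M}$ to the basepoint of $\bigvee \es^1_i$, so any loop in $M|\mathcal{M}$ lies in $\ker \varphi_\mathcal{M}$. For $\supset$ I would reuse the rerouting argument from the proof of Theorem~\ref{theorem:epimorphism_is_induced_by_regular_and_independent_system}: given $[\alpha] \in \ker \varphi_\mathcal{M}$, cut $\alpha$ by Lemma~\ref{lemma:cutting_loops_into_arcs} into arcs $\alpha_1 \cdots \alpha_k$ whose letters spell a trivial word in $F_r$, pick a cancellation $a_{i_m}^{\epsilon_m} a_{i_m}^{-\epsilon_m}$, and use connectedness of $N_{i_m}$ (regularity) to replace the two crossings by a detour along a same-side parallel copy $P_t(N_{i_m})$ with $t \in [-2,2]\setminus[-1,1]$ lying outside all tubes; this reduces $k$ by two, and induction terminates at $k=0$ with a loop representative in $M|\mathcal{M}$.

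Applying this identification to both $\N$ and $\N^{ij}$ (the latter being regular and independent by Proposition~\ref{proposition:epi_and_regular_is_independent}, since $\varphi_{\N^{ij}} = n_{ij} \circ \varphi_\N$ is surjective) forces the two images in $\pi_1(M)$ to coincide, and the Stiefel--Whitney criterion then gives the conclusion. The only real obstacle is the kernel-image identification; the regularity hypothesis is exactly what makes the rerouting step go through, so this is effectively already contained in the proof of Theorem~\ref{theorem:epimorphism_is_induced_by_regular_and_independent_system}, and invoking it is the natural move.
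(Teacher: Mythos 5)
Your treatment of (H1) and (H2) is fine, and the reduction of the orientability of $M|\N$ to the vanishing of $w_1(M)$ on $\im\bigl(\pi_1(M|\N)\to\pi_1(M)\bigr)$ is correct. The genuine gap is the claimed identity $\im\bigl(\pi_1(M|\mathcal{M})\to\pi_1(M)\bigr)=\ker\varphi_\mathcal{M}$, which is false in general: the left-hand side is a finitely generated subgroup, while $\ker\varphi_\mathcal{M}$ is the fundamental group of the cover of $M$ classified by $\varphi_\mathcal{M}$ and is typically of infinite index and infinitely generated. For a single non-separating circle $N$ on $\Sigma_2$, for instance, the cut surface has free fundamental group of rank $3$, whereas $\ker\varphi_{(N)}$ is the fundamental group of an infinite cyclic cover of $\Sigma_2$ --- a surface of infinite genus --- hence free of infinite rank; the two subgroups cannot coincide. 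The rerouting argument you borrow from the proof of Theorem~\ref{theorem:epimorphism_is_induced_by_regular_and_independent_system} does not prove your identity either: the detoured loop $\beta$ is \emph{not} homotopic to $\alpha$ in $M$ (the detour $\gamma$ together with the discarded arcs $\alpha_m\cdot\alpha_{m+1}$ bounds nothing in general), and in the paper that construction is used only to compare the values of two induced homomorphisms on $[\alpha]$ and $[\beta]$, never to assert $[\alpha]=[\beta]$ in $\pi_1(M)$.

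The strategy is salvageable with one correction. What is true is that $\ker\varphi_\mathcal{M}$ equals the \emph{normal closure} $\geng{\im(\pi_1(M|\mathcal{M})\to\pi_1(M))}^{\pi_1(M)}$: as in the proof of Proposition~\ref{proposition:size_of_maximum_extension_of_system}, $\pi_1(M)$ is a multiple HNN extension of $\pi_1(M|\mathcal{M})$ with stable letters $t_1,\ldots,t_r$ crossing the $N_i$ once, and killing the base group normally leaves the free group on the $t_i$, onto which $\varphi_\mathcal{M}$ descends as an isomorphism. Since $w_1(M)$ is a homomorphism into the abelian group $\Z/2$, it vanishes on a subgroup if and only if it vanishes on its normal closure; hence orientability of $M|\mathcal{M}$ is detected by $\ker\varphi_\mathcal{M}$ alone, and $\ker\varphi_\N=\ker\varphi_{\N^{ij}}$ finishes the proof. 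With this repair your argument becomes a genuinely different, and rather clean, route compared with the paper's, which works directly with orientation-reversing loops: it chooses the arc $\gamma$ in (H3) disjoint from a given orientation-reversing loop in $M|\N$ for one implication, and pushes an orientation-reversing loop in $M|\N^{ij}$ off $P(\gamma)$ and the orientable collar $P_{[0,1]}(N_i)$ for the other.
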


\begin{proof}
	It is clear for (H1) and (H2). For (H3) if $\alpha$ is an orientation-reversing loop in $M|\N$, then $M|\N \setminus \im \alpha$ is also connected and a path $\gamma$ between $N_j$ and $P_1(N_i)$ can be taken to be disjoint from $\alpha$, so $M|\N^{ij}$ is also non-orientable by Proposition \ref{proposition:not_cobordant_if_one_complement_is_orientable_and_second_not}. If $M|\N$ is orientable, but $\alpha$ is an orientation-reversing loop in $M|\N^{ij}$, then it intersects $N_j$ and $P_1(N_i)$ in $P(\gamma)$, the tubular neighbourhood of $\gamma$. When $\alpha$ intersects $N_j$ and goes into $P(\gamma)$, it may pass through $P(\gamma)$ and $P_{[0,1]}(N_i) \cong [0,1] \times N_i$ or again intersect $N_j$. Note that $P_{[0,1]}(N_i)$ is orientable, because $P_1(N_i)$ is orientable as a submanifold of the orientable manifold $M|\N$. Thus $\alpha$ may be changed to another orientation-reversing loop lying outside $\N$, a contradiction. Therefore $M|\N^{ij}$ is also orientable.
\end{proof}

\begin{theorem}\label{theorem:equivalence_for_surfaces_Nielsen_transformations}
	Let $\Sigma$ be a closed surface and let $1 \leq r \leq \corank(\pi_1(\Sigma))$ be an integer. Denote by $p$ the number of equivalence classes of epimorphisms $\pi_1(\Sigma) \to F_r$. Then
	\begin{enumerate}[(1)]
		\item If $\Sigma$ is orientable or non-orientable of odd genus, then $p=1$.
		\item if $\Sigma=S_{2m}$ is non-orientable of genus $2m$, then
		\begin{itemize}
			\item if $r<m$, then $p=2$,
			\item if $r=m$, then $p=1$.
		\end{itemize}
	\end{enumerate}	
\end{theorem}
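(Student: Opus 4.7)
The plan is to combine Theorem \ref{theorem:calculations_of_cobordisms_for_surfaces} and Proposition \ref{propostion:equivalence_as_strong_equivalence_plus_operations_H1-H3} with an invariant of equivalence derived from the first Stiefel--Whitney class $w_1\in\operatorname{Hom}(\pi_1(\Sigma),\Z/2)$ of $\Sigma$. If $\Sigma$ is orientable or non-orientable of odd genus, then Theorem \ref{theorem:calculations_of_cobordisms_for_surfaces} gives $q=1$, so a fortiori $p=1$. The remainder is devoted to $\Sigma=S_{2m}$.

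I would first call $\varphi\in\epi(\pi_1(\Sigma),F_r)$ of \emph{type O} if $w_1=\chi\circ\varphi$ for some $\chi\colon F_r\to\Z/2$, and of \emph{type N} otherwise. Invariance of the type under equivalence follows since, by Dehn--Nielsen, every $\nu\in\aut(\pi_1(\Sigma))$ is induced by a self-diffeomorphism of $\Sigma$, which preserves $w_1$ as a characteristic class of $T\Sigma$; hence $w_1\circ\nu=w_1$, and then $\varphi\circ\nu=\eta\circ\psi$ together with $w_1=\chi\circ\varphi$ yield $w_1=(\chi\circ\eta)\circ\psi$. Next I would give a geometric identification: for a regular independent system $\N$, using Lemma \ref{lemma:cutting_loops_into_arcs} to decompose any loop into arcs each crossing $\N$ in a single point, one checks that if $\Sigma|\N$ is orientable then $w_1=\chi_{I_\N}\circ\ab\circ\varphi_\N$, where $\chi_I(x_1,\dots,x_r)=\sum_{i\in I}x_i\bmod 2$ and $I_\N\subset\{1,\dots,r\}$ consists of those $i$ for which the gluing map $\xi_i$ from the construction preceding Theorem \ref{theorem:calculations_of_cobordisms_for_surfaces} is orientation-reversing; while if $\Sigma|\N$ is non-orientable, an orientation-reversing loop in $\Sigma|\N$ lies in $\ker\varphi_\N\setminus\ker w_1$, so $w_1$ cannot factor. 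Consequently, in the notation of Theorem \ref{theorem:calculations_of_cobordisms_for_surfaces}, the system $\N_0$ (present only when $r<m$) is of type N, and every $\N_I$ with $\varnothing\neq I$ is of type O; hence these two families lie in different equivalence classes.

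The main step is to show that all $\N_I$ with $I\neq\varnothing$ give equivalent epimorphisms. By Propositions \ref{proposition:bijection_for_surfaces_epi_and_systems} and \ref{propostion:equivalence_as_strong_equivalence_plus_operations_H1-H3} it suffices to show that (H1)--(H3) act transitively on $\{[\N_I]:I\neq\varnothing\}\subset\H^{fr}_r(\Sigma)/_{\diffd(\Sigma)}$. Using $\varphi_{\N^{ij}}=n_{ij}\circ\varphi_\N$, and the fact that the mod-$2$ abelianization of $n_{ij}$ sends $e_i\mapsto e_i+e_j$ and fixes the other basis vectors, comparing the factorizations $w_1=\chi_{I_\N}\circ\ab\circ\varphi_\N=\chi_{I_{\N^{ij}}}\circ\ab\circ\varphi_{\N^{ij}}$ yields $I_{\N^{ij}}=I_\N$ off the index $i$, and $i\in I_{\N^{ij}}$ iff exactly one of $i,j$ lies in $I_\N$. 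So (H3) with $j\in I_\N$, $i\notin I_\N$ adds $i$ to $I_\N$; with $i,j\in I_\N$ it removes $i$ from $I_\N$; and when $|I_\N|=1$ the set cannot be emptied. Together with permutations (H1) this gives transitivity on non-empty subsets. Hence for $r=m$ only type O occurs and $p=1$, while for $r<m$ the two types O and N give $p=2$.

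The principal obstacle is this last combinatorial step: extracting the correct parity dynamics of $I_\N$ under (H3) from the mod-$2$ abelianization of $n_{ij}$, and verifying that the induced action on non-empty subsets is transitive while preserving non-emptiness. Once that is in place, the Stiefel--Whitney invariant from the earlier steps combined with Theorem \ref{theorem:calculations_of_cobordisms_for_surfaces} immediately yields the stated values of~$p$.
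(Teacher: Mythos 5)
Your proposal is correct and arrives at the stated values of $p$, but it takes a genuinely different route through the hard part of the argument. Both proofs reduce, via Theorem \ref{theorem:calculations_of_cobordisms_for_surfaces} and Proposition \ref{propostion:equivalence_as_strong_equivalence_plus_operations_H1-H3}, to tracking how (H1)--(H3) move the representatives $\N_0$ and $\N_I$. The paper separates $[\N_0]$ from the $[\N_I]$ by Lemma \ref{lemma:H1_H3_do_not_change_orientability} (orientability of the complement is preserved, proved geometrically), and then identifies $\N_J^{ij}$ with $\N_{J\cup\{i\}}$ through a four-case analysis that constructs explicit orientation-reversing loops and pushes them across the connected-sum tube. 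You instead package everything into the single algebraic invariant ``does $w_1$ factor through $\varphi$, and if so through which $\chi_I\circ\ab$'': invariance under post-composition with $\aut(F_r)$ is immediate, invariance under pre-composition with $\aut(\pi_1(\Sigma))$ follows from Dehn--Nielsen, so your type dichotomy subsumes Lemma \ref{lemma:H1_H3_do_not_change_orientability}, and the paper's four-case geometric analysis collapses to the $\Z/2$-linear identity $\chi_{I_\N}=\chi_{I_{\N^{ij}}}\circ\ab(n_{ij})$, whose unique solution (agreement off $i$, and $i\in I_{\N^{ij}}$ iff exactly one of $i,j$ lies in $I_\N$) specializes to the paper's $I=J\cup\{i\}$ when $i\notin J$, $j\in J$; note that (H2) is invisible mod $2$, as needed. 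What your approach buys is a shorter, less error-prone combinatorial step and a conceptual explanation of the answer. What it costs is the one-time geometric verification that $w_1=\chi_{I_\N}\circ\ab\circ\varphi_\N$ when $\Sigma|\N$ is orientable, together with the observation that $I_\N$ is well defined independently of the orientation chosen on the connected complement, and an appeal to the classification in Theorem \ref{theorem:calculations_of_cobordisms_for_surfaces} to conclude that the class of a type-O system in $\H^{fr}_r(\Sigma)/_{\diffd(\Sigma)}$ is determined by $I_\N$. All of these are routine given what is already established in Section \ref{section:calculation_of_cobordisms}, so I see no gap.
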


\begin{proof}
	For the first part note that $1 \leq p \leq q$, where $q$ is the number of strong equivalence classes of epimorphisms $\pi_1(\Sigma)\to F_r$, and $q=1$ if $\Sigma$ is orientable or non-orientable of odd genus. If $\Sigma$ is non-orientable of genus $2m$, then by Theorem \ref{theorem:calculations_of_cobordisms_for_surfaces} and Proposition~\ref{propostion:equivalence_as_strong_equivalence_plus_operations_H1-H3} we need to investigate the operations (H1)--(H3) on the systems $\N_0$ and $\N_I$ for $\varnothing \neq I \subset \{1,\ldots,r\}$. Since by the above lemma the operations do not change the orientability of complements of systems, $\varphi_{\N_0}$ and $\varphi_{\N_I}$ cannot be equivalent for any $I$, so $p \geq 2$ if $r<m$. We will show that all $\N_I$ induce equivalent epimorphisms.
	
	Use the operation (H3) on $\N_J = \N = (N_1,\ldots,N_r)$, for $i\notin J$ and $j\in J$, obtaining the system $\N^{ij}$ which represents the same element in $\H^{fr}_r(\Sigma)/_{\diffd(\Sigma)}$ as $\N_I$ for some $I$. We will show that $I= J \cup \{i\}$.
	
	First, note that $l \in J$ if and only if $\Sigma | \N \cup P(N_l)$ is non-orientable. Let us divide the proof into four steps:
	\begin{itemize}
		\item $j \in I$: It follows from the fact that $\Sigma |\N^{ij} \cup P(N_j \#_\gamma P_1(N_i)) = \Sigma |\N \cup P(N_j)$ is non-orientable, since $j\in J$.
		
		\item $J\setminus\{j\} \subset I$: Let $l \in J\setminus\{j\}$. Thus there is an orientation-reversing loop $\alpha$ in $\Sigma|\N\cup P(N_l)$ which intersects $N_l$ in a single point. Since a tubular neighbourhood of $\alpha$ is a M\"obious band, $\Sigma|\N \setminus \im\alpha$ is also connected and $\gamma$ in (H3) can be taken to be disjoint from $\alpha$. Thus $\im\alpha \subset \Sigma|\N^{ij}\cup P(N_l)$, so the latter subspace is non-orientable. Therefore $l\in I$ since (H3) does not depend on the choice of $\gamma$ up to framed cobordism.
		
		\item $i\in I$: Take an orientation-reversing loop $\alpha$ in $\Sigma|\N \cup P(N_j)$ intersecting $N_j$ in a~single point $x$, which is a starting point of an arc $\gamma$ joining $N_j$ with $P_1(N_i)$, and intersecting $\partial P(\gamma)$ in a single point $y$. Thus we may write $\alpha = \alpha_1 \cdot \alpha_2$, where $\alpha_1$ is a path outside $P(\gamma)$ joining $y$ with $x$. Let the endpoint of $\gamma$ in $P_1(N_i)$ correspond to $(1,z) \in \{1\} \times N_i$ and take a path $\tau \colon [-1,1] \to P(N_i) \cong [-1,1]\times N_i$ defined by $\tau(t) = (-t,z)$. Moreover, take a path $\beta$ from $\tau(1)\in P_{-1}(N_i)$ to $y$, which is contained in $\Sigma |\N \setminus \im \gamma$ (such a path exists since $\gamma$ does not disconnect $\Sigma|\N$). Now, form the~loop $\alpha' = \alpha_1 \cdot \gamma \cdot \tau\cdot \beta$, which is contained in $\Sigma |\N^{ij} \cup P_{[-1,\frac{1}{2}]}(N_i)$ by taking a smaller tubular neighbourhood of $\gamma$ used in the connected sum $N_j \#_\gamma P_1(N_i)$. Remember that the neighbourhood of $N_i$ should be considered smaller than $P(N_i)$, say $P_{[-1,\frac{1}{2}]}(N_i)$, after performing (H3) operation. The loop $\alpha'$ is orientation-reversing since it is homotopic to $\alpha \cdot \overline{\alpha_2} \cdot \gamma\cdot\tau \cdot \beta$, where $\overline{\alpha_2}$ is the inverse path for $\alpha_2$, and $ \overline{\alpha_2} \cdot \gamma\cdot\tau \cdot \beta$ is orientation-preserving as it can be homotoped to lie in $\Sigma|\N \cup P(N_i)$, which is orientable. Therefore $i \in I$.
		
		\item $l \notin I$ if $l \notin J\cup \{i\}$: if $\Sigma|\N^{ij}\cup P(N_l)$ contains an orientation-reversing loop, then as in Lemma \ref{lemma:H1_H3_do_not_change_orientability} it leads to an orientation-reversing loop in $\Sigma|N\cup P(N_l)$, a contradiction.
		
	\end{itemize}
	Thus using (H3) we may transform any $\N_J$ to be the same element of $\H^{fr}_r(\Sigma)/_{\diffd(\Sigma)}$ as $\N_{\{1,\ldots,r\}}$, so they all induce equivalent epimorphisms.
\end{proof}

\section{Reeb graphs and Reeb epimorphisms}\label{section:reeb_epimorphisms}

In this section we establish relations between Reeb graph theory and the earlier results. First, let us state basic notions.

A \emph{smooth triad} is a triple $(W,W_-,W_+)$, where $W$ is a manifold with boundary $\partial W = W_- \sqcup W_+$ (possibly $W_\pm = \varnothing$). A smooth function $f\colon W \to [a,b]$ is a \emph{function on a smooth triad} $(W,W_-,W_+)$ if $f^{-1}(a) = W_-$, $f^{-1}(b)=W_+$ and all its critical points are contained in $\int W$, the interior of $W$.

Let $f\colon W \to \R$ be a function with finitely many critical points on a~smooth triad $(W,W_-,W_+)$. We say that $x,y \in W$ are in \emph{Reeb relation} $\sim_\reeb$ if and only if $x$ and $y$ belong to the same connected component of a~level set
of~$f$. The quotient space $W/\!\sim_{\mathcal{R}}$ is denoted by
$\reeb{(f)}$ and called the \emph{Reeb graph} of $f$.

The Reeb graph of the function $f$ as above is homeomorphic to a
finite graph, i.e. to a one-dimensional finite CW-complex (see
\cite{Reeb}, \cite{Sharko}). The vertices of~$\reeb{(f)}$
correspond to the components of $W_\pm$ and to the components of
level sets of $f$ containing critical points. The homomorphism $q_\# \colon \pi_1(W) \to \pi_1(\reeb(f)) \cong F_r$ induced by the quotient map $q \colon W \to \reeb(f)$ is surjective (see \cite{KMS}) and is called the \emph{Reeb epimorphism} of~$f$. The number $r$ as above is called the \emph{cycle rank} of $\reeb(f)$ and it is equal to the first Betti number $\beta_1(\reeb(f))$.

For an oriented graph (i.e. each its edge has a chosen direction), the \emph{indegree} and \emph{outdegree} of its vertex $v$ are the number of incident edges which income and outgo from $v$, respectively. The \emph{degree} $\deg(v)$ is the numbers of all incident edges to $v$. 

By the quotient topology $f$ induces the continuous function $\overline{f}\colon\reeb{(f)} \to \R$ such that $f = \overline{f}\circ q$. It is strictly monotonic on the edges of $\reeb(f)$ and has extrema only at vertices of degree $1$. A function on a graph satisfying these properties induces an orientation of the graph called a \emph{good orientation} (see Sharko \cite{Sharko}). Thus any Reeb graph is considered with a good orientation. 
Orientations of graphs presented in figures in this paper are from the bottom to the top.

Recall that $f\colon W \to \R$ is a \emph{Morse function} if it is smooth and all its critical points are non-degenerate. A~Morse function $f$ is \emph{simple} if its critical levels contain only one critical point and it is \emph{ordered} if for any two critical points $p$ and $p'$ of $f$ if $\ind(p) < \ind(p')$, then $f(p) < f(p')$, where $\ind(p)$ is the index of non-degenerate critical point $p$. Note that the Reeb graph of an ordered Morse function on a manifold of dimension at least~$3$ is a tree. The same is also true for a surface and self-indexing Morse function $f\colon W\to \R$, i.e. $f(p) = \ind(p)$ for each critical point~$p$ (see \cite{Michalak-DCG}).

For some reasons it is convenient to use a slightly different condition on a Morse function than its simplicity. Namely, we say that a Morse function $f$ is \emph{$\reeb$-simple} if any vertex of $\reeb(f)$ corresponds exactly to a one critical point of $f$. Equivalently, any connected component of a level set of $f$ contains at most one critical point. Obviously, a simple Morse function is $\reeb$-simple. Moreover, having an $\reeb$-simple Morse function we can easily make a small perturbation to get a simple Morse function without changing its Reeb graph.

Let $f\colon W \to \R$ be an $\reeb$-simple Morse function on the triad $(W,W_-,W_+)$ of dimension $n=\dim W$. The vertices of degree~$1$ in $\reeb(f)$ correspond to components of $\partial W$ and to the extrema of $f$, the critical points of indices $0$ and $n$. If $W$ is an orientable surface, then all other vertices have degree $3$ (see \cite{Michalak-TMNA}). However, if $W$ is not an orientable surface, then the other vertices of $\reeb(f)$ have degrees $2$ and $3$. In addition, for $n\geq 3$ vertices of degree $3$ correspond to critical points of index $1$ (with indegree $2$) or index $n-1$ (with outdegree $2$), and vertices of degree $2$ correspond to indices $1,\ldots,n-1$ (see~\cite{Michalak-TMNA, Michalak-DCG, Reeb}).

In \cite{Michalak-DCG} there were defined \emph{combinatorial modifications} of Reeb graphs of simple (or $\reeb$-simple) Morse functions numbered by (1) -- (9), (11), (12), and the modification (10) transforming the Reeb graph of a simple Morse function to the Reeb graph of a Morse function, which accumulates some critical points to the one connected component of level set. We will use them extensively in the proofs. They were introduced for manifolds of dimension at least $3$, but they can be well-defined for also non-orientable surfaces because of the same degree-index correspondence as we mentioned above (with some caution for modification (7)). For orientable surfaces we need to omit modifications with vertices of degree $2$. In fact, for orientable surfaces Fabio and Landi \cite{Fabio-Landi} introduced such operations called \emph{elementary deformations}. In order not to separate these cases, we will use the term combinatorial modification for Reeb graphs of functions on any manifold, having in mind that for orientable surfaces there are no modifications with vertices of degree~$2$. Moreover, the modification (6) for orientable surfaces works in both ways since it deals with critical points of the same index $1$ (it corresponds to elementary deformations ($\rm{K}_2$)~and~($\rm{K}_3$)~of~\cite{Fabio-Landi}).

The considerations in this section are motivated by and based on the following theorem of the second author \cite{Michalak-DCG} which summarized previous results.

\begin{theorem}[{\cite[Theorem 5.2]{Michalak-DCG}}]\label{thm:equivalent_conditions}
	Let $M$ be a closed, smooth and connected manifold of dimension at least two. The following are equivalent:
	\begin{enumerate}[(a)]
		\item There exists a Morse function $f\colon M \rightarrow \R$ (simple if $M$ is not an orientable surface) such that $\beta_1(\reeb(f)) = r$.
		\item There exists an epimorphism $\pi_1(M) \rightarrow F_r$.
		\item There exists a regular and independent system of hypersurfaces of size $r$ in $M$.
	\end{enumerate}
\end{theorem}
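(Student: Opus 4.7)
The proof naturally splits into the cyclic chain (a)$\Rightarrow$(b)$\Rightarrow$(c)$\Rightarrow$(a). The first implication is immediate: by the quoted result of \cite{KMS} the Reeb epimorphism $q_\#\colon \pi_1(M)\to \pi_1(\reeb(f))$ is surjective, and since $\reeb(f)$ is a finite graph its fundamental group is free of rank $\beta_1(\reeb(f))=r$. The implication (b)$\Rightarrow$(c) is exactly Theorem \ref{theorem:epimorphism_is_induced_by_regular_and_independent_system}, already established in Section \ref{section:systems_of_hypersurfaces}.

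For (c)$\Rightarrow$(a), given a regular independent system $\N=(N_1,\ldots,N_r)$, I would construct $f$ by a cut-and-paste argument. Cut $M$ along $\N$ to obtain the compact connected manifold $M|\N$ whose boundary consists of $2r$ components $N_i^-$ and $N_i^+$, the two framed sides of each $N_i$. Viewing this as a smooth triad with $W_- := \bigcup_{i=1}^r N_i^-$ and $W_+ := \bigcup_{i=1}^r N_i^+$, choose an $\reeb$-simple Morse function $g\colon M|\N\to[-\varepsilon,\varepsilon]$ whose Reeb graph $\reeb(g)$ is a tree. Then define $f\colon M \to \R$ by setting $f=g$ on $M|\N$ and $f(x,t)=\varepsilon t$ on each tube $P(N_i)\cong N_i\times[-1,1]$. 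The boundary values match, so $f$ is smooth, and its critical points coincide with those of $g$, hence $f$ is Morse.

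To compute $\beta_1(\reeb(f))$, observe that each $N_i$ is a single connected regular level of $f$ at value $0$, so the tube $P(N_i)$ projects to an arc in $\reeb(f)$ joining the two leaves of $\reeb(g)$ that came from $N_i^-$ and $N_i^+$. Hence $\reeb(f)$ is the tree $\reeb(g)$ with $r$ extra edges attached, each connecting a pair of leaves; this produces exactly $r$ independent cycles, so $\beta_1(\reeb(f))=r$ as required.

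The main obstacle is producing $g$ with a tree Reeb graph on the triad $(M|\N, W_-, W_+)$. For $\dim M\geq 3$, an ordered Morse function on the triad suffices, since only critical points of indices $0,1,n-1,n$ affect the homotopy type of the Reeb graph; after cancelling superfluous index-$1$ and index-$(n-1)$ pairs (or building $g$ directly from a handle decomposition of $M|\N$ starting with the collar on $W_-$) one can guarantee that no saddle glues a component of a regular level to itself. For surfaces the dimension argument fails and one reasons instead via combinatorial modifications: start with any Morse function on $M|\N$ with the prescribed boundary values and reduce $\beta_1(\reeb(g))$ using the elementary deformations and combinatorial modifications recalled before the theorem statement, using that $M|\N$ is connected. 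A final generic perturbation of $f$ yields a simple Morse function whenever the statement demands it.
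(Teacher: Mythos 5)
Your treatment of (a)$\Rightarrow$(b) (surjectivity of $q_\#$ from \cite{KMS}, plus $\pi_1$ of a finite graph being free of rank $\beta_1$) and of (b)$\Rightarrow$(c) (Theorem \ref{theorem:epimorphism_is_induced_by_regular_and_independent_system}) agrees with the paper. The gap is in (c)$\Rightarrow$(a): the function you glue together is not Morse. You take $g$ on the triad $\bigl(M|\N,\ \bigcup_i P_{-1}(N_i),\ \bigcup_i P_{1}(N_i)\bigr)$ with range $[-\varepsilon,\varepsilon]$, so $g\geq -\varepsilon$ on $M|\N$ with equality exactly on $\bigcup_i P_{-1}(N_i)$; on the tube $P(N_i)\cong N_i\times[-1,1]$ your $f(x,t)=\varepsilon t$ also satisfies $f\geq -\varepsilon$ with equality exactly on $N_i\times\{-1\}$. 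Since the tube and $M|\N$ lie on opposite sides of the gluing hypersurface $P_{-1}(N_i)$, the glued $f$ attains the value $-\varepsilon$ as a local minimum along that entire hypersurface (and symmetrically a local maximum along $P_{1}(N_i)$): the transversal derivative changes sign there, so $f$ either fails to be smooth or has a whole critical submanifold. The obstruction is structural, not a matter of smoothing: if $N_i$ really is a component of a regular level $f^{-1}(0)$ of a Morse function, then $f$ restricted to $M|\N$ takes values \emph{below} $-\varepsilon$ near $P_{-1}(N_i)$ and \emph{above} $\varepsilon$ near $P_{1}(N_i)$, so it is never a triad function from $\bigcup_i P_{-1}(N_i)$ to $\bigcup_i P_{1}(N_i)$, and no choice of $g$ of the form you describe can arise as such a restriction.

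The paper's construction (proof of Theorem \ref{theorem:factorization_by_Reeb_epimorphism}) fixes exactly this point by one extra step: an auxiliary ordered Morse function on $M|\N$ is used only to locate a connected regular level $V$ splitting $M|\N$ into pieces $Q_-$ and $Q_+$ with $P_{-1}(\N)\subset\partial Q_-$ and $P_{1}(\N)\subset\partial Q_+$; these are then given \emph{new} functions $g_-\colon Q_-\to[-2,-1]$ with all of $\partial Q_-$ at the top level $-1$, and $g_+\colon Q_+\to[1,2]$ with all of $\partial Q_+$ at the bottom level $1$, and the $r+1$ tubes $P(N_1),\dots,P(N_r),P(V)$ run monotonically from $-1$ to $1$. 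Every gluing is then max-to-min, $f$ is Morse, and $\beta_1(\reeb(f))=(r+1)-2+1=r$ since $\reeb(g_\pm)$ are trees. Your discussion of how to force a tree Reeb graph (ordered functions in dimension at least $3$, combinatorial modifications for surfaces) is the right ingredient but is applied to the wrong decomposition; note also that for non-orientable surfaces the simplicity of $f$ required by the statement does not follow from a generic perturbation but needs the separate argument of Theorem \ref{theorem:realization_of_graph_for_surfaces_with_prescribed_level_sets}.
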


\begin{remark}
	The equivalence of conditions (b) and (c)  has been described by
	Cornea \cite[Theorem 1]{Cornea} and for combinatorial manifolds by
	Jaco \cite[Theorem~2.1]{Jaco}. It is evident that the condition (a)
	implies the conditions (b) and (c) (see \cite{KMS}). Moreover, Gelbukh \cite{Gelbukh:DCG} showed the equivalence of conditions (a) and (b) for orientable manifolds by using foliation theory. In \cite[Theorem 5.2]{Michalak-DCG} there is a crucial proof that (c) implies (a). Theorems~\ref{theorem:factorization_by_Reeb_epimorphism}~and~\ref{theorem:epimorphism_is_induced_by_regular_and_independent_system} provide a direct proof that (b) or (c) imply~(a).
\end{remark}

\begin{remark}
	Orientable surfaces have a unique property that a simple (and also $\reeb$-simple) Morse function on a triad of orientable surface of genus $g$ has the Reeb graph with cycle rank equal to $g$ (see \cite{Edelsbrunner} and also \cite{Michalak-TMNA}).
\end{remark}

The combinatorial modifications of Reeb graphs together with the construction of the initial graph (see Figure \ref{figure:initial_graph}) are the main ingredients in the following solution of realization problem for Reeb graphs from \cite{Michalak-DCG}. It was a natural problem to determine which graph can be the Reeb graph of a smooth function with isolated critical points on a given manifold (cf. \cite{Saeki_Reeb_spaces}, see~Remark~\ref{remark:Saeki}).

\begin{proposition}[{\cite{Michalak-DCG}}]
	For any finite graph $\Gamma$ with good orientation there is a finite sequence of combinatorial modifications (1)~--~(12) which transform the initial graph to $\Gamma$ up to vertices of degree $2$.
\end{proposition}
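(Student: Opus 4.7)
The plan is to exploit reversibility. Most of the modifications (1)--(12) are invertible up to insertion or deletion of degree-$2$ vertices, so it suffices to show that any finite graph $\Gamma$ with good orientation can be \emph{reduced} by a finite sequence of (1)--(12) to the initial graph of the same cycle rank $r = \beta_1(\Gamma)$. Reading the resulting sequence backwards yields the required transformation of the initial graph into $\Gamma$. A secondary consistency check is that each modification preserves $\beta_1(\Gamma)$ (or changes it in a controlled way balanced by an extrema change), so the cycle rank of the target initial graph is determined by $\Gamma$ alone.

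The approach is induction on a complexity measure of $\Gamma$, for example the lexicographic pair consisting of the number of vertices of degree $\neq 2$ and the number of degree-$1$ vertices. In the first phase of the induction step I would minimize the number of extrema: using the birth--death type modifications among (1)--(12), any degree-$1$ vertex adjacent to a saddle whose remaining edge continues monotonically can be cancelled, and one iterates this until only the extrema permitted by the initial graph remain (one minimum and one maximum in dimension $\geq 3$, and the configuration appropriate to surfaces). That such a cancellable pair always exists when the extremum count is excessive follows from the local structure at a degree-$1$ vertex forced by a good orientation, together with connectedness of $\Gamma$.

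The second phase is a reordering of the interior vertices with respect to the height function $\overline{f}$. Using the commutation-type modifications I would push every vertex corresponding to an index-$1$ critical point below every vertex corresponding to an index-$(n-1)$ critical point, and then sort the two groups internally, inserting or deleting degree-$2$ vertices as needed. What emerges is a graph with a single minimum, followed by a block of index-$1$ saddles that opens up $r$ parallel branches, followed by a block of index-$(n-1)$ saddles that merges them pairwise, followed by a single maximum. This is exactly the initial graph, up to degree-$2$ vertices, completing the reduction.

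The main obstacle will be showing that at each inductive step \emph{some} complexity-reducing modification actually applies: when $\Gamma$ has a nested cyclic structure, the candidate vertex for a cancellation or commutation may be tied to the rest of the graph in a way that locally forbids the move. The resolution is a finite case analysis of the possible local configurations allowed by a good orientation, exhibiting in each case either an immediately applicable element of (1)--(12) or a short preparatory sequence (typically of type (6)-like commutations, and, in the non-orientable setting, an invocation of (10) together with degree-$2$ vertex manipulations) that unblocks one. Here the clause ``up to vertices of degree $2$'' is indispensable: it is exactly the slack needed so that configurations which differ only by subdivisions of monotone edges are treated as equal, which is what makes the above case analysis terminate.
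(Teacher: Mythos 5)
The paper does not actually prove this proposition: it is quoted verbatim from \cite{Michalak2}, and the only information given here about that proof is that its Steps 1 and 2 reduce to the case of a primitive graph (no oriented path from a vertex of indegree $2$ to one of outdegree $2$) with vertices of degree $1$ and $3$ only. Your two-phase normalization (cancel surplus extrema, then sort the two species of degree-$3$ vertices into two blocks) is broadly in that spirit. However, your argument has a genuine gap at its foundation: the entire plan rests on reducing $\Gamma$ to the initial graph and then ``reading the sequence backwards,'' which proves the proposition only if every modification you used has its inverse \emph{also} among (1)--(12). You assert this for ``most'' of the modifications, which is not enough, and the paper itself warns against it: it notes that modification (6) works in both directions only for orientable surfaces, and in the proof of Theorem \ref{theorem:realization_of_graph_for_surfaces_with_prescribed_level_sets} the authors are forced to introduce two \emph{new} modifications (13) and (14) and to verify explicitly that they are two-sided precisely because they want to run a reduce-and-reverse argument of the kind you propose. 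In dimension $\geq 3$ the two kinds of degree-$3$ vertices correspond to critical points of different indices ($1$ versus $n-1$), so the associated moves are not symmetric and reversibility cannot be taken for granted. Either you must check, modification by modification, that the inverses you invoke lie in the list, or you must construct the sequence directly in the direction from the initial graph to $\Gamma$.

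Two further points. First, the ``finite case analysis'' guaranteeing that a complexity-reducing or commutation move is always applicable is where essentially all the work of the proposition lives, and you only promise it; in particular your induction measure is not decreased by the reordering phase, so termination of Phase 2 needs a separate argument (e.g.\ counting inversions between the two blocks), together with a check that the reconnection rule of each commutation preserves $\beta_1$ --- note that each independent cycle forces at least one outdegree-$2$ vertex to remain below some indegree-$2$ vertex, so the claim that the sorted graph ``is exactly the initial graph'' is consistent but not automatic. Second, you have the index--degree correspondence backwards relative to the paper's convention: a vertex of indegree $2$ corresponds to index $1$ (a merge as the level rises) and a vertex of outdegree $2$ to index $n-1$ (a split); your described target configuration is combinatorially the correct initial graph, but with the Morse-index labels interchanged, and an ordered arrangement in your labelling would in fact be a tree. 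This is cosmetic for the purely combinatorial statement but signals that the local pictures need to be pinned down before the case analysis can be carried out.
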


\begin{theorem}[{\cite[Theorem 6.4]{Michalak-DCG}}]\label{theorem:realization-DCG}
	Let $M$ be a closed, connected manifold of dimension $n\geq 2$ and $\Gamma$ be a finite connected oriented graph. There exists a Morse function $f\colon M \to \R$ such that $\reeb(f)$ is orientation-preserving homeomorphic to~$\Gamma$ if and only if $\Gamma$ has a good orientation and $\beta_1(\Gamma) \leq \corank(\pi_1(M))$. Moreover, if $M$ is not an orientable surface and the maximum degree of a vertex in $\Gamma$ is not greater than $3$, then $f$ can be taken to be simple. 
\end{theorem}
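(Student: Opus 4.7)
The plan is to handle the two implications separately, with sufficiency doing essentially all of the work by combining the hypersurface construction of Section~\ref{section:systems_of_hypersurfaces} with a combinatorial realization of graphs by modifications.

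For \emph{necessity}, suppose $f\colon M\to\R$ is a Morse function with $\reeb(f)$ orientation-preserving homeomorphic to $\Gamma$. The induced continuous function $\overline{f}\colon \reeb(f)\to\R$ is strictly monotone on each edge and has extrema only at degree-one vertices, so the orientation pulled back to $\Gamma$ is good. Moreover the Reeb epimorphism $q_{\#}\colon\pi_1(M)\twoheadrightarrow\pi_1(\reeb(f))\cong F_r$ with $r=\beta_1(\Gamma)$ is surjective, hence $\beta_1(\Gamma)\leq\corank(\pi_1(M))$.

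For \emph{sufficiency}, set $r:=\beta_1(\Gamma)$. Since $r\leq\corank(\pi_1(M))$ there is an epimorphism $\varphi\colon\pi_1(M)\to F_r$, and Theorem~\ref{theorem:epimorphism_is_induced_by_regular_and_independent_system} gives a regular and independent system $\N=(N_1,\ldots,N_r)$ in $M$ inducing $\varphi$. The first step is to build a Morse function $f_0\colon M\to\R$ whose Reeb graph is the initial graph of cycle rank $r$, with the submanifolds $N_i$ appearing as components of one common regular level set: thicken $\N$ to disjoint product neighbourhoods, send each $P(N_i)$ to a short interval about the prescribed regular value, and extend $f_0$ across the connected complement $M|\N$ by means of any Morse handle decomposition of that cobordism. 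Independence and regularity of $\N$ force $\reeb(f_0)$ to be exactly the initial graph.

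By the proposition preceding the theorem, $\Gamma$ is obtained from the initial graph by a finite sequence of combinatorial modifications $(1)$--$(12)$, up to vertices of degree~$2$. The main step is then to realize each such combinatorial move as a Morse-theoretic modification of the current function, performed locally around one or two critical levels: depending on its type the move amounts to creating or cancelling a pair of critical points, a handle slide, or a reordering of critical values that merges or separates components of a level set. Each local move preserves the diffeomorphism type of $M$ while effecting exactly the prescribed change on the Reeb graph, and applying the moves in sequence yields a Morse function $f\colon M\to\R$ with $\reeb(f)\cong\Gamma$ as oriented graphs. Finally, if $M$ is not an orientable surface and the maximum degree in $\Gamma$ is at most $3$, each required move can be chosen to involve a single critical point, producing an $\reeb$-simple function that a generic perturbation turns into a simple one without changing the Reeb graph. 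The hard part, as expected, is the case-by-case local realization of the twelve combinatorial modifications, together with verifying that the identification $\reeb(f)\cong\Gamma$ remains orientation-preserving at every intermediate step and that the vertices of degree $2$ (and their correspondence with critical points of various indices in the non-orientable case) are handled correctly.
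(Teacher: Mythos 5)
This theorem is not proved in the paper at all: it is quoted verbatim from \cite[Theorem~6.4]{Michalak2}, and the surrounding text only summarizes the strategy of that external proof (construction of the initial graph plus realization of the combinatorial modifications (1)--(12)). Your proposal reconstructs exactly that strategy --- necessity via the good orientation induced by $\overline{f}$ and surjectivity of the Reeb epimorphism, sufficiency via an epimorphism $\pi_1(M)\to F_r$, an inducing regular independent system, a function realizing the initial graph, and then a Morse-theoretic implementation of each combinatorial modification --- so in outline it matches the intended argument.

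One step is wrong as literally stated. You claim that extending $f_0$ across $M|\N$ ``by means of any Morse handle decomposition of that cobordism'' yields the initial graph, and that ``independence and regularity of $\N$ force'' this. They do not: an arbitrary Morse function on the complement can have a Reeb graph with nonzero cycle rank (and hence $\beta_1(\reeb(f_0))>r$), or a tree of the wrong shape, so $\reeb(f_0)$ need not be the initial graph and the Proposition transforming the initial graph into $\Gamma$ cannot then be invoked. What is actually needed --- and what the proof of Theorem~\ref{theorem:factorization_by_Reeb_epimorphism} in this paper does --- is to choose an \emph{ordered} Morse function on $M|\N$ (self-indexing in the surface case), whose Reeb graph is a tree by \cite[Proposition~3.2]{Michalak2}, and then tidy the resulting tree into the initial graph using modifications of type (4) and (5). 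With that correction, and granting the case-by-case local realization of the twelve modifications that you explicitly defer, your sketch agrees with the proof in \cite{Michalak2}.
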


In this section we resolve the realization problem for a manifold with boundary together with representation of an epimorphism as the Reeb epimorphism of a Morse function. We do it by constructing a Morse function with certain components of level sets prescribed by a system of hypersurfaces.


\subsection{The initial graph and factorization by Reeb epimorphism}

Recall that $\pi_0(X)$ is the set of path components of a space $X$ and their number is denoted by $|\pi_0(X)|$. Since all discussed by us spaces are locally path connected, the set $\pi_0(X)$ of path components of a space $X$ is equal to  the set  of connected components~of~$X$.

We call a graph $\Gamma$ \emph{admissible} for a triad $(W,W_-,W_+)$ if it has at least $|\pi_0(W_-)|$ vertices of degree $1$ and indegree $0$ and at least $|\pi_0(W_+)|$ vertices of degree $1$ and outdegree $0$.

The graph presented in Figure \ref{figure:initial_graph} (a) is called the \emph{initial graph} (with a given cycle rank $r$). Recall that by our convention it has orientation from the bottom to the top. 
We distinguish a spanning tree in the initial graph, coloured red in the figure. Moreover, we order the edges $e_1,\ldots,e_r$ outside the tree. We define also a version of the initial graph which is admissible for a triad $(W,W_-,W_+)$ (see Figure \ref{figure:initial_graph} (b)).

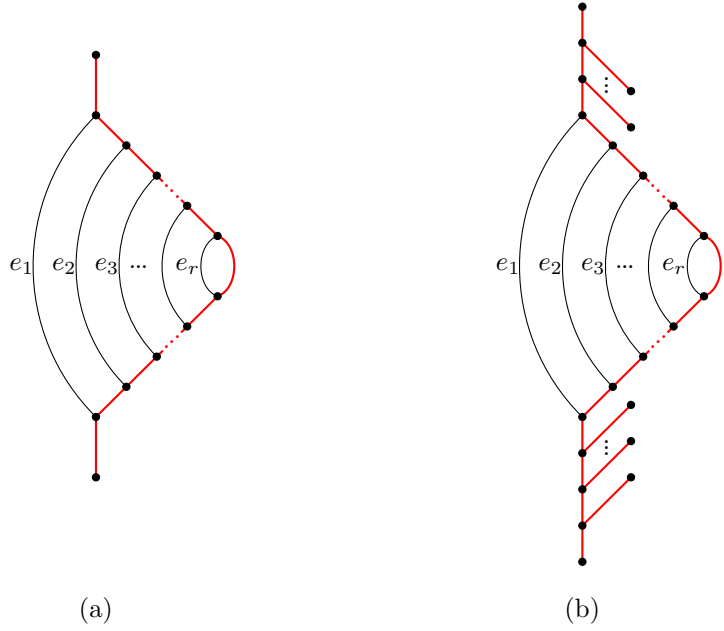
\begin{figure}[h]
	\centering
	\begin{tikzpicture}[scale=0.8]

		
		\draw[thick,red] (6,0) -- (6,1);
		\draw[thick,red] (6.5,1.5) -- (6,1);
		\draw[thick,red] (6.5,1.5) -- (7,2);
		\draw[thick,red] (7.1,2.1) -- (7,2);
		\draw[thick,red] (7.4,2.4) -- (7.5,2.5);
		\draw[thick,red] (7.5,2.5) -- (8,3);

		\draw[thick,red] (6,7) -- (6,6);
		\draw[thick,red] (6.5,5.5) -- (6,6);
		\draw[thick,red] (6.5,5.5) -- (7,5);
		\draw[thick,red] (7.1,4.9) -- (7,5);
		\draw[thick,red] (7.4,4.6) -- (7.5,4.5);
		\draw[thick,red] (7.5,4.5) -- (8,4);

		\draw (8,4) to[out=200,in=160] (8,3);
		\draw[thick,red] (8,4) to[out=340,in=20] (8,3);

		\draw (7.5,4.5) to[out=225,in=135] (7.5,2.5);
		\draw (7,5) to[out=225,in=135] (7,2);
		\draw (6.5,5.5) to[out=225,in=135] (6.5,1.5);
		\draw (6,6) to[out=225,in=135] (6,1);

		\draw[red] (7.34,2.34) node { .};
		\draw[red] (7.25,2.25) node { .};
		\draw[red] (7.16,2.16) node { .};

		\draw[red] (7.34,4.66) node { .};
		\draw[red] (7.25,4.75) node { .};
		\draw[red] (7.16,4.84) node { .};

		\filldraw (6,0) circle (1.7pt);
		\filldraw (6,1) circle (1.7pt);
		\filldraw (6.5,1.5) circle (1.7pt);
		\filldraw (7,2) circle (1.7pt);
		\filldraw (7.5,2.5) circle (1.7pt);
		\filldraw (8,3) circle (1.7pt);
		\filldraw (8,4) circle (1.7pt);
		\filldraw (7.5,4.5) circle (1.7pt);
		\filldraw (7,5) circle (1.7pt);
		\filldraw (6.5,5.5) circle (1.7pt);
		\filldraw (6,6) circle (1.7pt);
		\filldraw (6,7) circle (1.7pt);

		\draw (4.73,3.5) node {$e_1$};
		\draw (5.43,3.5) node {$e_2$};
		\draw (6.13,3.5) node {$e_3$};
		
		\draw (6.6,3.5) node {.};
		\draw (6.7,3.5) node {.};
		\draw (6.8,3.5) node {.};
		
		\draw (7.47,3.5) node {$e_r$};
		
		\draw (6,-2.2) node{(a)};

		
		\draw[thick,red] (14,-1.4) -- (14,1);
		\draw[thick,red] (14.5,1.5) -- (14,1);
		\draw[thick,red] (14.5,1.5) -- (15,2);
		\draw[thick,red] (15.1,2.1) -- (15,2);
		\draw[thick,red] (15.4,2.4) -- (15.5,2.5);
		\draw[thick,red] (15.5,2.5) -- (16,3);
		
		
		\draw[thick,red] (14,-0.8) -- (14.8,0);
		\draw[thick,red] (14,-0.2) -- (14.8,0.6);
		\draw[thick,red] (14,0.4) -- (14.8,1.2);
		
		\draw (14.4,0.4) node {.};
		\draw (14.4,0.5) node {.};
		\draw (14.4,0.6) node {.};

		\draw[thick,red] (14,7.8) -- (14,6);
		\draw[thick,red] (14.5,5.5) -- (14,6);
		\draw[thick,red] (14.5,5.5) -- (15,5);
		\draw[thick,red] (15.1,4.9) -- (15,5);
		\draw[thick,red] (15.4,4.6) -- (15.5,4.5);
		\draw[thick,red] (15.5,4.5) -- (16,4);

		
		\draw[thick,red] (14,6.6) -- (14.8,5.8);
		\draw[thick,red] (14,7.2) -- (14.8,6.4);
		
		\draw (14.4,6.4) node {.};
		\draw (14.4,6.5) node {.};
		\draw (14.4,6.6) node {.};

		\draw (16,4) to[out=200,in=160] (16,3);
		\draw[thick,red] (16,4) to[out=340,in=20] (16,3);

		\draw (15.5,4.5) to[out=225,in=135] (15.5,2.5);
		\draw (15,5) to[out=225,in=135] (15,2);
		\draw (14.5,5.5) to[out=225,in=135] (14.5,1.5);
		\draw (14,6) to[out=225,in=135] (14,1);

		\draw[red] (15.34,2.34) node { .};
		\draw[red] (15.25,2.25) node { .};
		\draw[red] (15.16,2.16) node { .};

		\draw[red] (15.34,4.66) node { .};
		\draw[red] (15.25,4.75) node { .};
		\draw[red] (15.16,4.84) node { .};

		\filldraw (14,-1.4) circle (1.7pt);
		\filldraw (14,-0.8) circle (1.7pt);
		\filldraw (14,-0.2) circle (1.7pt);
		\filldraw (14.8,0) circle (1.7pt);
		\filldraw (14.8,0.6) circle (1.7pt);
		\filldraw (14.8,1.2) circle (1.7pt);
		\filldraw (14,0.4) circle (1.7pt);
		\filldraw (14,1) circle (1.7pt);
		\filldraw (14.5,1.5) circle (1.7pt);
		\filldraw (15,2) circle (1.7pt);
		\filldraw (15.5,2.5) circle (1.7pt);
		\filldraw (16,3) circle (1.7pt);
		\filldraw (16,4) circle (1.7pt);
		\filldraw (15.5,4.5) circle (1.7pt);
		\filldraw (15,5) circle (1.7pt);
		\filldraw (14.5,5.5) circle (1.7pt);
		\filldraw (14,6) circle (1.7pt);
		\filldraw (14,6.6) circle (1.7pt);
		\filldraw (14,7.2) circle (1.7pt);
		\filldraw (14,7.8) circle (1.7pt);
		\filldraw (14.8,6.4) circle (1.7pt);
		\filldraw (14.8,5.8) circle (1.7pt);

		\draw (12.73,3.5) node {$e_1$};
		\draw (13.43,3.5) node {$e_2$};
		\draw (14.13,3.5) node {$e_3$};
		
		\draw (14.6,3.5) node {.};
		\draw (14.7,3.5) node {.};
		\draw (14.8,3.5) node {.};
		
		\draw (15.48,3.5) node {$e_r$};
		
		\draw (14,-2.2) node{(b)};

	\end{tikzpicture}
	
	\caption{ (a) the initial graph with distinguished tree and ordered edges outside it; (b) the initial graph admissible for $(W,W_-,W_+)$.}\label{figure:initial_graph}
\end{figure}

The initial graph with cycle rank equal to $g$ occurs easily as the Reeb graph of a height function on an orientable surface of genus $g$. In fact, by \cite[Theorem 5.6]{Michalak-TMNA} it~can be the Reeb graph of a Morse function on any closed surface with Euler characteristic at most $2-2g$.

\begin{theorem}\label{theorem:factorization_by_Reeb_epimorphism}
	Let $\partial W = W_- \sqcup W_+$ and $\mathcal{N}=(N_1,\ldots,N_r)$ be a system of hypersurfaces without boundary in~$W$. Then the induced homomorphism $\varphi_\N$ is factorized by the Reeb epimorphism of a simple Morse function $f\colon W \to \R$ on the smooth triad $(W,W_-,W_+)$ such that the connected components of $\N$ are components of some regular level $f^{-1}(c) = \N \sqcup V$, where $V$ is a non-empty submanifold.
	Moreover, if we allow $f$ to be not necessarily simple for $\dim W =2$, then
	we can construct $f$ in such a way that
	$$\beta_1(\reeb(f)) = |\pi_0(\N)| - |\pi_0(W|\N)| +1.$$
	Furthermore, if $\N$ is regular and independent, then $\reeb(f)$ can be taken to be homeomorphic to the initial graph admissible for $(W,W_-,W_+)$, with cycle rank equal to $r$ and such that $N_i$ corresponds to the edge $e_i$ for each $i$ as in Figure \ref{figure:initial_graph}.
\end{theorem}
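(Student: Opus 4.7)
The strategy is to assemble $f$ by pasting a projection on tubular neighbourhoods of $\N$ with a Morse function on the complement. On each $P(N_i) \cong N_i \times [-1,1]$ I set $f(x,t) := c + t$ for a fixed value $c$, so that $\N \subset f^{-1}(c)$ is a regular level and each $P_t(N_i)$ is a regular fibre. The complement $W|\N$ is a compact manifold whose boundary splits as $W_- \sqcup W_+ \sqcup P_{-1}(\N) \sqcup P_{1}(\N)$; standard Morse theory on manifolds with boundary produces a simple Morse function $g \colon W|\N \to [a,b]$ with $g(W_-) = a$, $g(W_+) = b$, $g|_{P_{\pm 1}(N_i)} \equiv c \pm 1$, where $a < c-1 < c+1 < b$. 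The maps $g$ and the projection match smoothly along the seams $P_{\pm 1}(N_i)$, producing a simple Morse function $f$ on the triad $(W,W_-,W_+)$. By construction $f^{-1}(c) = \N \sqcup V$ with $V := g^{-1}(c)$, and $V$ is non-empty because $g$ attains both values $c-1$ and $c+1$ on at least one component of $W|\N$.

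The factorization through the Reeb graph is immediate from the definition of $f_\N$. On $W|\N$ the map $f_\N$ is constant at the basepoint, and on each $P(N_i)$ every level set $P_t(N_i)$ of $f$ is mapped by $f_\N$ to the single point $t \in \es^1_i$. Hence $f_\N$ is constant on every connected component of every level set of $f$ and descends to a continuous map $\overline{f_\N} \colon \reeb(f) \to \bigvee_{i=1}^r \es^1_i$, yielding $\varphi_\N = (\overline{f_\N})_\# \circ q_\#$.

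For the Betti number formula, let $k = |\pi_0(\N)|$ and $l = |\pi_0(W|\N)|$. Choose $g$ so that each $\reeb(g|_{C_j})$ is a tree for every component $C_j$ of $W|\N$; this is achievable with ordered Morse functions in dimension at least $3$ and with suitable, possibly non-simple, Morse functions on surfaces (hence the $\dim W = 2$ clause). The subgraph $G \subset \reeb(f)$ obtained from $W|\N$ is then a disjoint union of $l$ trees, so $\chi(G) = l$. Each connected component of $P(\N)$ contributes exactly one additional edge to $\reeb(f)$ and there are $k$ such components, hence $\chi(\reeb(f)) = l - k$. Since $\reeb(f)$ is connected, $\beta_1(\reeb(f)) = 1 - \chi(\reeb(f)) = k - l + 1$.

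When $\N$ is regular and independent, $k = r$ and $l = 1$, so $W|\N$ is connected. The final claim — that $\reeb(f)$ can be arranged to be the admissible initial graph with $N_i \leftrightarrow e_i$ — requires constructing $g$ whose Reeb graph is exactly the designated spanning tree (in red in Figure \ref{figure:initial_graph}(b)), with the boundary pieces $P_{\pm 1}(N_i)$ placed at the endpoints of the $i$-th would-be loop. This is the main technical obstacle: one needs not merely a tree Reeb graph, but a specific tree with prescribed attachments of boundary components. I would achieve it by starting from any ordered (or otherwise convenient) Morse function on $W|\N$ and iteratively applying the combinatorial modifications of Reeb graphs from \cite{Michalak2} to massage the tree into the required shape, ensuring that the degree-one vertices corresponding to $P_{\pm 1}(N_i)$ end up attached to opposite endpoints of $e_i$ once the $P(N_i)$ are glued back in.
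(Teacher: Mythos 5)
Your overall strategy---a projection on the tubes $P(N_i)$ glued to a Morse function on $W|\N$, an Euler--characteristic count for $\beta_1(\reeb(f))$, and combinatorial modifications to reach the initial graph---is the same as the paper's. But there is a genuine gap, and it is exactly the point where the paper inserts a device you omit: an auxiliary level set $V$ chosen \emph{in advance}. Your justification that $V=g^{-1}(c)$ is non-empty ("because $g$ attains both values $c-1$ and $c+1$ on at least one component of $W|\N$") is false in general: no component of $W|\N$ need meet both levels. Take $W=\es^n$ (so $W_\pm=\varnothing$) and $\N$ a single equatorial sphere; then $W|\N$ has two components, one whose only boundary is $P_{-1}(\N)$ and one whose only boundary is $P_{1}(\N)$, and a Morse function on the first may have image $[c-2,c-1]$, so $g^{-1}(c)=\varnothing$. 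The paper avoids this by first taking an ordered Morse function $h$ on $W|\N$ and, via \cite[Lemma 3.3]{Michalak2}, a regular value $d$ whose level $V=h^{-1}(d)$ meets \emph{every} component of $W|\N$; the function $f$ is then rebuilt around the prescribed regular level $\N\sqcup V$, so non-emptiness holds by construction.

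The same device is what licenses the step you assert without proof, namely that $g$ can be chosen so that each $\reeb(g|_{C_j})$ is a tree. The "ordered $\Rightarrow$ tree" results you invoke are for functions on triads, whereas your $g$ has boundary components pinned at four distinct levels $a<c-1<c+1<b$, so $g|_{C_j}$ is not a triad function and the citation does not apply. The paper cuts each component of $W|\N$ along its component of $V$ into pieces $Q_-$ and $Q_+$, each a genuine triad with one empty end; ordered Morse functions on these have tree Reeb graphs by \cite[Proposition 3.2]{Michalak2}, and gluing one such tree to another along the single edge coming from $P(V)$ again gives a tree. Granting that, your Euler--characteristic computation is correct and agrees with the paper's (which contracts $q(W|\N)$ and counts circles in the resulting wedge), and your sketch of the last part matches the paper's use of the modifications (4) and (5) on $g_\pm$, though you leave that step at the level of intention rather than verifying that the modifications suffice.
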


\begin{proof}
	The construction of the desired function is analogous to that in Theorem \ref{thm:equivalent_conditions} $(c) \implies (a)$ in \cite{Michalak-DCG}, but here $\mathcal{N}$ may not be independent and regular. For details on the existence of Morse functions and gluing operations we refer to \cite{Milnor}.
	
	Take an ordered Morse function $h\colon W|\N \to \R$ on the triad $$(W|\N, P_{-1}(\N) \sqcup W_+,P_{1}(\N)\sqcup W_-)$$ and a regular value $d$ from \cite[Lemma~3.3]{Michalak-DCG} such that $V:= h^{-1}(d)$ has the same number of connected components as $W|\N$. Let
	$$P(V) := h^{-1}([d-\varepsilon,d+\varepsilon]) \cong V \times [-1,1],$$
	$$Q_- := h^{-1}((-\infty,d-\varepsilon]) \ \ \text{ and }\ \  Q_+ := h^{-1}([d+\varepsilon,\infty)).$$
	Then
	$$
	\partial Q_- = P_{-1}(V) \sqcup P_{-1}(\N)\sqcup W_+ \ \   \text{ and }\ \  \partial Q_+ = P_{1}(V) \sqcup P_{1}(\N)\sqcup W_-.
	$$
	Thus $V$, $Q_+$ and $Q_-$ have the same number of connected components. 
	Now, take simple and ordered Morse functions 
	$$g_- \colon Q_- \rightarrow [-2,-1] \text{ on the triad } (Q_-,\emptyset,\partial Q_-),$$
	$$g_+ \colon Q_+ \rightarrow [1,2] \text{ on the triad } (Q_+,\partial Q_+,\emptyset).$$
	
	Let us glue them together with suitable projections $P(\N\sqcup V) \to [-1,1]$ obtaining the simple Morse function $f \colon W \to \R$ with regular value $0$ such that $f^{-1}(0) = \N \sqcup V$.
	
	Let $q\colon W \to \reeb(f)$ and $g \colon \reeb(f)\to \reeb(f)/q(W|\N) = \bigvee_{i=1}^r \es^1_i$ be the quotient maps. The map $g$ sends $q(W|\N)$ to the basepoint and $q(P(N_i)) \cong [-1,1]$ linearly and orientation-preserving onto $\es^1_i$. It is clear that $\varphi_\N = (f_\N)_\#= (g\circ q)_\# = g_\# \circ q_\#$, so $\varphi_\N$ is factorized by the Reeb epimorphism $q_\#$ of $f$.
	
	Now, let us compute $\beta_1(\reeb(f))$. The subset $q(W|\N)$ of $\reeb(f)$ is homeomorphic to the Reeb graph $\reeb(f|_{W|\N})$, so it has $|\pi_0(W|\N)|$ connected components. If $\dim W \geq 3$, then the components of $\reeb(f|_{W|\N})$ are trees, because components of $\reeb(f|_{Q_\pm})$ are trees by \cite[Proposition 3.2]{Michalak-DCG} (since the Morse functions on $Q_\pm$ are ordered)
	and they are gluing through $\reeb(f|_{P(V)}) \cong [-1,1]$, one component of $\reeb(f|_{Q_\pm})$ with only one component of $\reeb(f|_{P(V)})$.
	In the case of surfaces by the same fact
	we may define Morse functions (self-indexing, not simple) on components of $Q_\pm$ whose Reeb graphs are trees.
	In both the cases $q(W|\N)$ has $|\pi_0(W|\N)|$ components which are trees, and so they are contractible.
	
	Thus the quotient $\reeb(f)/q(W|\N)$ can be obtained from $\reeb(f)$ by first the contraction of components of $q(W|\N)$, and then by gluing them to the one point. The first operation does not change the first Betti number, but the second increases it by one for each gluing of two points. Hence  $\reeb(f)/q(W|\N)$ has cycle rank equal to $\beta_1(\reeb(f)) + |\pi_0(W|\N)| -1$. On the other hand, it is clear that $\reeb(f)/q(W|\N)$ is homeomorphic to the wedge product of $|\pi_0(\N)|$ circles. Therefore
	$$
	|\pi_0(\N)| = \beta_1(\reeb(f)) + |\pi_0(W|N)| -1.
	$$
	
	Now, let $\mathcal{N}$ be a regular and independent system of hypersurfaces. Let $\dim W \geq 3$. Since $W|\N$ and $V$ are connected, the manifolds $Q_\pm$ are also connected, so we may assume that $g_\pm$ has only one critical point being extremum. Then by \mbox{\cite[Proposition~3.2]{Michalak-DCG}} the Reeb graph $\reeb(g_-)$ (resp. $\reeb(g_+)$) is a~tree with one minimum (maximum) and so all vertices of degree $3$ have indegree~$1$ (outdegree $1$). By the proved formula on cycle rank we have $\beta_1(\reeb(f)) = r$. We proceed as in the proof of \cite[Proposition~6.2]{Michalak-DCG}. By means of the combinatorial modifications we move up (move down) all vertices of degree $2$ in $\reeb(g_-)$ (in $\reeb(g_+)$ respectively). 
	Then by using the modification (4) on $f|_{Q_-} = g_-$ and (5) on $f|_{Q_+} = g_+$  we can obtain a simple Morse function on $W$ whose Reeb graph is homeomorphic to the initial graph admissible for $(W,W_-,W_+)$ with the desired correspondence between $N_i$ and its edges $e_i$.	
	
	The last statement in the case of surfaces can be similarly obtained with some additional effort or it follows by Theorem~\ref{theorem:realization_of_graph_for_surfaces_with_prescribed_level_sets} (Theorem \ref{theorem:factorization_by_Reeb_epimorphism} is not used in the proof of Theorem~\ref{theorem:realization_of_graph_for_surfaces_with_prescribed_level_sets} for surfaces).
\end{proof}

\begin{remark}
	For $\dim W =2$ note that the components of $Q_\pm$ can be either orientable or non-orientable, even if $W$ is non-orientable. Since on an orientable component a simple Morse function has a Reeb graph with the maximum cycle rank, it is a reason why simplicity of a Morse function in the theorem is excluded also if $W$ is a non-orientable surface.
\end{remark}

Using the above theorem we may easily prove the last part of Proposition~\ref{proposition:epi_and_regular_is_independent}. If $\N$ is regular and $\varphi_\N$ is surjective, then by Theorem \ref{theorem:factorization_by_Reeb_epimorphism} the induced epimorphism $\varphi_\N$ is factorized by a Reeb epimorphism of rank $r' =r -|\pi_0(W|\N)| +1$. Since $r \leq r'$, it implies that $|\pi_0(W|\N)| \leq 1$, so $\N$ is independent.

This theorem can be also used to easily prove the following known fact for orientable surfaces.

\begin{corollary}   \label{corollary:orientable_surface_maximal_epi_has_rank_g}
	Any epimorphism $\varphi\colon \pi_1(\Sigma_g) \to F_r$ is factorized through an epimorphism $\pi_1(\Sigma_g) \to F_g$.
\end{corollary}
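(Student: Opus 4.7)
The plan is to assemble the corollary directly from three ingredients already available in the paper: (i) the fact that any epimorphism onto $F_r$ is induced by a regular and independent system of hypersurfaces (Theorem \ref{theorem:epimorphism_is_induced_by_regular_and_independent_system}), (ii) the factorization of $\varphi_\N$ through a Reeb epimorphism given by Theorem \ref{theorem:factorization_by_Reeb_epimorphism}, and (iii) the remark that on an orientable surface $\Sigma_g$ a simple (or $\reeb$-simple) Morse function has Reeb graph with cycle rank exactly $g$.

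First, given an epimorphism $\varphi\colon \pi_1(\Sigma_g)\to F_r$, I would invoke Theorem \ref{theorem:epimorphism_is_induced_by_regular_and_independent_system} to represent $\varphi = \varphi_\N$ for some regular and independent system $\N = (N_1,\ldots,N_r)$ of hypersurfaces (necessarily without boundary, since $\Sigma_g$ is closed). Next, I would apply Theorem \ref{theorem:factorization_by_Reeb_epimorphism} (with $W_\pm = \varnothing$) to obtain a simple Morse function $f\colon \Sigma_g\to\R$ whose Reeb epimorphism $q_\#\colon \pi_1(\Sigma_g)\to \pi_1(\reeb(f))$ factorizes $\varphi_\N$, that is $\varphi = g_\# \circ q_\#$ for the quotient-induced homomorphism $g_\#$ appearing in the proof of that theorem.

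The remaining step is identifying the middle group. By the remark following Theorem \ref{thm:equivalent_conditions}, any simple Morse function on the closed orientable surface $\Sigma_g$ has a Reeb graph of cycle rank exactly $g$, so $\pi_1(\reeb(f))\cong F_g$. Since $\varphi$ is surjective and equals $g_\# \circ q_\#$ with $q_\#$ already surjective onto $F_g$, the map $q_\#$ is the desired epimorphism $\pi_1(\Sigma_g)\to F_g$ through which $\varphi$ factors, completing the argument.

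I do not anticipate a real obstacle: the proof is essentially a one-line assembly once one recognizes that the earlier construction produces a Reeb graph whose first Betti number is automatically forced to be $g$ on an orientable genus-$g$ surface. The only minor care needed is to check that the factorization in Theorem \ref{theorem:factorization_by_Reeb_epimorphism} is stated for systems without boundary (satisfied trivially here) and that $q_\#$ — not $g_\#$ — is the epimorphism onto $F_g$ we name in the statement.
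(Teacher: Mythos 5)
Your proposal is correct and follows essentially the same route as the paper: the paper's proof likewise applies Theorem \ref{theorem:factorization_by_Reeb_epimorphism} (implicitly via Theorem \ref{theorem:epimorphism_is_induced_by_regular_and_independent_system} to first realize $\varphi$ as $\varphi_\N$) and then concludes from the fact that a simple Morse function on $\Sigma_g$ always has Reeb graph of cycle rank $g$. You merely make the intermediate step of producing the inducing system explicit, which the paper leaves tacit.
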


\begin{proof}
	By Theorem \ref{theorem:factorization_by_Reeb_epimorphism} $\varphi$ is factorized by the Reeb epimorphism of a~simple Morse function $f \colon \Sigma_g \to \R$, whose rank is equal to $\beta_1(\reeb(f))$. By \cite{Edelsbrunner} (cf. \cite{Michalak-TMNA}) the Reeb graph of a simple Morse function on $\Sigma_g$ has always cycle rank equal to $g$, so $\beta_1(\reeb(f))=g$.
\end{proof}

In fact, since any two epimorphisms $\pi_1(\Sigma_g) \to F_g$ are strongly equivalent by Theorem \ref{thm:grigorchuk}, for a fixed $\psi\colon \pi_1(\Sigma_g) \to F_g$ any $\varphi\colon \pi_1(\Sigma_g) \to F_r$ is factorized through $\psi \circ \eta$ for some $\eta \in \aut(\pi_1(\Sigma_g))$.

\subsection{Reeb number of manifolds with boundary}\label{Reeb number of manifolds with boundary}

The \emph{Reeb number} $\reeb(M)$ of a closed manifold $M$ was an object of studies in \cite{Michalak-TMNA,Michalak-DCG} and without using this name in \cite{Edelsbrunner, Gelbukh:DCG, Gelbukh:filomat}.
It is defined as the maximum cycle rank among Reeb graphs of functions with finitely many critical points on $M$. By Theorem \ref{thm:equivalent_conditions} (\cite[Theorem 5.2]{Michalak-DCG}) and \cite[Lemma 3.5]{Michalak-TMNA} the equality $\reeb(M) = \corank(\pi_1(M))$ holds (see also~\cite{Gelbukh:filomat}).

For a compact manifold $W$, possibly with boundary, we define (following Cornea \cite{Cornea}) the number $C(W)$ to be the maximum number of connected components in a proper,
$2$-sided submanifold $N$ of $W$ such that $W\setminus N$ is connected.
In other words, it is the maximum size of an independent and regular system in $W$. It is clear by Theorem \ref{thm:equivalent_conditions}  that $\reeb(M) = C(M)$ for a closed manifold $M$.

The following fact was proven by Jaco \cite{Jaco} for
combinatorial manifolds. Cornea announced only  inequality $C(W)
\geq \corank(\pi_1(W)) - |\pi_0(\partial W)| +1$ if $\partial W \neq
\varnothing$, but the theorem holds also in the smooth category.

\begin{theorem}\label{theorem:Jaco for smooth}
	$C(W) = \corank(\pi_1(W))$.
\end{theorem}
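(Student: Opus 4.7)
The plan is to extract the equality as a direct corollary of the two principal theorems of Section~\ref{section:systems_of_hypersurfaces}: Proposition~\ref{proposition:homomorphisms_are_induced_by_systems} (independent systems induce epimorphisms) and Theorem~\ref{theorem:epimorphism_is_induced_by_regular_and_independent_system} (epimorphisms are induced by regular independent systems). The bridge between the geometric invariant $C(W)$ and the algebraic invariant $\corank(\pi_1(W))$ is exactly the correspondence $\N \mapsto \varphi_\N$ developed in that section, and once one unpacks the definition of $C(W)$ into the equivalent description ``the maximum size $r$ of a regular and independent system of hypersurfaces in $W$,'' both inequalities are immediate.

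For the inequality $C(W) \leq \corank(\pi_1(W))$, I would take a regular and independent system $\N=(N_1,\ldots,N_{C(W)})$ realizing the maximum. Proposition~\ref{proposition:homomorphisms_are_induced_by_systems} then delivers an epimorphism $\varphi_\N \colon \pi_1(W) \to F_{C(W)}$, so by the definition of corank one has $\corank(\pi_1(W)) \geq C(W)$. For the reverse inequality $\corank(\pi_1(W)) \leq C(W)$, I would pick an epimorphism $\varphi \colon \pi_1(W) \to F_r$ with $r=\corank(\pi_1(W))$; Theorem~\ref{theorem:epimorphism_is_induced_by_regular_and_independent_system} produces a regular and independent system of size $r$ in $W$, which witnesses $C(W)\geq r$.

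The main obstacle has already been overcome, namely in the proof of Theorem~\ref{theorem:epimorphism_is_induced_by_regular_and_independent_system}, where the machinery of Lemmas~\ref{lemma:cutting_loops_into_arcs} and \ref{lemma:connected_sum_of_submanifolds} together with Proposition~\ref{proposition:epi_and_regular_is_independent} and the Hopfian property of $F_r$ was used to upgrade an arbitrary inducing system to a regular and independent one. Thus the proof here reduces to a two-sentence invocation of these results; no extra argument involving Cornea's or Jaco's techniques is needed, which is precisely what makes the theorem cleaner than in the combinatorial setting of \cite{Jaco}.
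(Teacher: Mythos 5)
Your proof is correct and matches the paper's own argument essentially verbatim: one inequality follows from the fact that an independent system of size $C(W)$ induces an epimorphism onto $F_{C(W)}$ (Proposition~\ref{proposition:homomorphisms_are_induced_by_systems}), and the other from Theorem~\ref{theorem:epimorphism_is_induced_by_regular_and_independent_system} producing a regular, independent system of size $\corank(\pi_1(W))$. No differences worth noting.
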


\begin{proof}
	If there is an independent and regular system $\mathcal{N}$ of size $k = C(W)$, then the induced homomorphism $\varphi_\N$ is onto $F_k$, so $C(W) \leq \corank(\pi_1(W))$. From the other side,
	any epimorphism onto the free group of rank equal to $\corank(\pi_1(W))$ is by Theorem \ref{theorem:epimorphism_is_induced_by_regular_and_independent_system} induced by a regular and independent system,
	so $C(W) = \corank(\pi_1(W))$.
\end{proof}

Now, we extend the definition of Reeb number on manifolds with boundary. First, define $\reeb(W,W_-,W_+)$, where $\partial W = W_- \sqcup W_+$, as the maximum cycle rank among all
Reeb graphs of smooth functions with finitely many critical points on the smooth triad $(W,W_-,W_+)$. By \cite[Lemma 3.5]{Michalak-TMNA} applied for smooth triads it is attainable by simple Morse functions.

\begin{proposition}\label{proposition:reeb_number_for_triads_and_systems_without_boundary}
	$\reeb(W,W_-,W_+)$ is equal to the maximum size of an independent and regular system without boundary in $W$. Thus it does not depend on the partition $\partial W = W_- \sqcup W_+$.
\end{proposition}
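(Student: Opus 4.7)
The plan is to prove both inequalities of the stated equality. For $\reeb(W,W_-,W_+) \geq r$, where $r$ denotes the maximum size of an independent and regular system without boundary in $W$, I would invoke Theorem \ref{theorem:factorization_by_Reeb_epimorphism} directly: given such a system $\N$ of size $r$, it produces a simple Morse function on the triad $(W,W_-,W_+)$ whose Reeb graph is the initial graph admissible for the triad with cycle rank exactly $r$.

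For the reverse inequality, take a Morse function $f$ on the triad with $\beta_1(\reeb(f)) = \reeb(W,W_-,W_+) =: r$, which, as recalled just before the proposition, may be assumed simple by \cite[Lemma 3.5]{Michalak-TMNA}. Set $\Gamma := \reeb(f)$ and let $q \colon W \to \Gamma$, $\bar f \colon \Gamma \to [a,b]$ be the induced maps. Fix a spanning tree $T \subset \Gamma$, let $e_1,\ldots,e_r$ be the edges outside $T$, pick interior points $x_i \in e_i$ with regular values $c_i := \bar f(x_i)$, and define $N_i := q^{-1}(x_i)$, a connected component of the regular level set $f^{-1}(c_i)$. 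Each $N_i$ is then a connected, closed, $2$-sided submanifold of $W$, and the $N_i$ are pairwise disjoint as distinct Reeb-fibers. The key observation is that any edge of $\Gamma$ incident to a degree-$1$ vertex must lie in $T$; since the vertices corresponding to components of $W_\pm$ have degree $1$, this forces $c_i \in (a,b)$ strictly and $N_i \cap \partial W = \varnothing$. Hence $\N := (N_1,\ldots,N_r)$ is a regular system without boundary of size $r$ in $W$.

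It remains to show $\N$ is independent. The map $f_\N \colon W \to \bigvee_{i=1}^r \es^1_i$ from Definition \ref{definition:induced by system of hyperspaces} factors as $f_\N = g \circ q$, where $g \colon \Gamma \to \bigvee_{i=1}^r \es^1_i$ collapses the complement of small arc neighbourhoods $U_i \ni x_i$ of the $x_i$ to the basepoint and maps each $U_i$ orientation-preservingly onto $\es^1_i$ compatibly with the framing of $N_i$. Since this collapsed complement deformation retracts onto the contractible tree $T$, the map $g$ is a homotopy equivalence, so $g_\#$ is an isomorphism of $F_r$; combined with surjectivity of the Reeb epimorphism $q_\#$, this makes $\varphi_\N = g_\# \circ q_\#$ an epimorphism onto $F_r$. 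Because $\N$ is regular of size equal to the rank of its induced surjection, the final clause of Proposition \ref{proposition:epi_and_regular_is_independent}, proved via Theorem \ref{theorem:factorization_by_Reeb_epimorphism} in the remark following Corollary \ref{corollary:orientable_surface_maximal_epi_has_rank_g}, forces $\N$ to be independent. Partition-independence of $\reeb(W,W_-,W_+)$ is then immediate, since the right-hand side makes no reference to $W_\pm$.

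The main obstacle is this independence step: a direct proof that $W|\N$ is connected would require a somewhat fiddly fibre-connectedness argument for the Reeb quotient $q$, using the connectedness of fibres together with local triviality on edges and suitable behaviour at vertices. The indirect route above recovers surjectivity of $\varphi_\N$ from the factorisation through $\Gamma$ and then invokes the already-established regular-and-surjective-implies-independent criterion, so no such fibrewise lifting needs to be carried out by hand.
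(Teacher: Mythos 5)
Your proof is correct and follows the same route as the paper: one inequality is obtained by invoking Theorem \ref{theorem:factorization_by_Reeb_epimorphism} on a maximal system, and the other by taking the preimages under $q$ of interior points of the edges of $\reeb(f)$ lying outside a spanning tree. The paper merely asserts that the resulting system is regular, independent and without boundary, whereas you justify independence indirectly, by factoring $\varphi_\N$ through the surjective Reeb epimorphism and then applying the last clause of Proposition \ref{proposition:epi_and_regular_is_independent}; this is a valid elaboration of the same argument rather than a different approach.
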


\begin{proof}
	By Theorem \ref{theorem:factorization_by_Reeb_epimorphism} if $\N$ is a regular and independent system without boundary in $W$ of size $r$, then $\varphi_\N$ is factorized by the Reeb epimorphism of rank ${|\pi_0(\N)| - |\pi_0(W|\N)| +1= r}$, which gives inequality in one way. However, if $f$ is a simple Morse function on $(W,W_-,W_+)$ such that $\reeb(f)$ has cycle rank equal to $\reeb(W,W_-,W_+)$, then components of level sets of $f$ corresponding to edges of $\reeb(f)$ outside some spanning tree form a regular and independent system of hypersurfaces in $W$ of size $\reeb(W,W_-,W_+)$.
\end{proof}

\begin{definition}
	For a compact manifold $W$ with boundary we define its \emph{Reeb number} as $\reeb(W) := \reeb(W,W_-,W_+)$ for any $\partial W = W_- \sqcup W_+$.
\end{definition}

\noindent It is obvious that $\reeb(W) \leq C(W)$.

\begin{remark}
	Note that $\reeb(W)$ can be defined as the maximum cycle rank among Reeb graphs of Morse functions on $W$ which are constant on connected components of $\partial W$. We use triads for simplifying considerations.
\end{remark}

Let $\cone(X) := X \times [0,1] / X \times \{1\}$ denote the cone over a space $X$. The point corresponding to $X \times \{1\}$ is called the vertex of the cone.

For a compact manifold $W$ with a boundary divided into two parts $\partial W = A \sqcup B$, where $A_1,  \ldots,  A_k$ are all connected components of~$A$, define 
$$\cone_A(W) := W \cup_A \bigcup_{i=1}^k \cone(A_i),$$
which is obtained by gluing cones $\cone(A_i)$ and $W$ along $A$. Let $v_i$ be the vertex of $\cone(A_i)$. Clearly, we may identify $$\cone_{A}(W) \setminus \{v_1,\ldots,v_k\} \cong W \setminus A.$$

Hereafter, we denote by $\geng{\pi_1(A)}^{\pi_1(W)}$ the normal subgroup of $\pi_1(W)$ generated by all images of $\pi_1(A_i)$ in $\pi_1(W)$ by the homomorphisms induced by inclusions $A_i \subset W$.
By Seifert--van Kampen theorem
$$\pi_1(\cone_{A}(W)) \cong \pi_1(W)/\geng{\pi_1(A)}^{\pi_1(W)}.$$
It is clear that up to isomorphism this group is well-defined without referencing to the basepoint. 

The following proposition describes properties of an epimorphism onto a free group in terms of its factorization and a system of hypersurfaces, generalizing \cite[Proposition~4.2]{Stallings_corank1} of Stallings.

\begin{proposition}\label{proposition:cones-systems_omitting_part_of_boundary}
	Let $W$ be a compact manifold and  $\partial W = A \sqcup B$. Then an epimorphism $\varphi\colon \pi_1(W) \to F_r$ is factorized through $\pi_1(W)/\geng{\pi_1(A)}^{\pi_1(W)}$ if and only
	if it is induced by an independent and regular system $\N$ such that $\N \cap A = \varnothing$.
\end{proposition}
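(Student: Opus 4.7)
The plan is to handle the easy direction by the Pontryagin--Thom map $f_\N$ directly, and to handle the hard direction by factoring the classifying map $W\to K(F_r,1)$ through $\cone_{\partial A}(W)$ so that the preimage construction automatically stays away from $A$.

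For $(\Leftarrow)$, suppose $\N$ is a regular and independent system with $\N\cap A=\varnothing$. Shrink the product neighbourhoods $P(N_i)$ so that they are still disjoint from $A$; this is possible since $A$ is closed and disjoint from $\N$. Then $A\subset W|\N$, so the map $f_\N\colon W\to\bigvee_{i=1}^r \es^1_i$ of Definition \ref{definition:induced by system of hyperspaces} collapses $A$ to the basepoint. Consequently, for every loop $\alpha$ in a component $A_i\subset A$ one has $f_\N\circ\alpha\equiv\ast$, so the image of $\pi_1(A_i)$ in $\pi_1(W)$ lies in $\ker\varphi_\N$. Since $\ker\varphi_\N$ is normal, $\geng{\pi_1(A)}^{\pi_1(W)}\subset\ker\varphi_\N$, giving the required factorization.

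For $(\Rightarrow)$, assume $\varphi=\widetilde\varphi\circ p$, where $p\colon\pi_1(W)\to \pi_1(W)/\geng{\pi_1(A)}^{\pi_1(W)}\cong\pi_1(\cone_{\partial A}(W))$ is the quotient map. Since $\bigvee_{i=1}^r \es^1_i$ is a $K(F_r,1)$, there is a continuous map $g\colon\cone_{\partial A}(W)\to\bigvee_{i=1}^r \es^1_i$ with $g_\#=\widetilde\varphi$. Compose with the natural collapse $q\colon W\to\cone_{\partial A}(W)$ (the identity off $A$, sending each $A_i$ to the cone vertex $v_i$) to obtain $f:=g\circ q\colon W\to\bigvee_{i=1}^r \es^1_i$ with $f_\#=\varphi$ and $f(A)\subset\{v_1,\dots,v_k\}$; homotoping if necessary we may assume $f(A)$ is the basepoint. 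Now rerun the proof of Proposition \ref{proposition:homomorphisms_are_induced_by_systems}: smooth $f$ outside the preimage of the basepoint (leaving a neighbourhood of $A$ constant), pick regular values $a_i\in\es^1_i$ of $f$ and of $f|_B$, and set $N_i:=f^{-1}(a_i)$. Since $f(A)$ is the basepoint and $a_i\neq\ast$, we automatically have $N_i\cap A=\varnothing$, yielding a framed system $\N$ inducing $\varphi$ with $\N\cap A=\varnothing$.

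To finish, apply Theorem \ref{theorem:epimorphism_is_induced_by_regular_and_independent_system} to replace $\N$ by a regular and independent system inducing the same epimorphism, but carried out \emph{relative to} $A$. The two moves used in that proof are (i) the connected-sum reduction of Lemma \ref{lemma:connected_sum_of_submanifolds} along an arc $\gamma$ which can be chosen in $\int W$, and (ii) passage to a subsystem consisting of selected components as in Proposition \ref{proposition:epi_and_regular_is_independent}. Both operations preserve disjointness from $A$: the arcs $\gamma$ and their tubular neighbourhoods may be pushed off the collar of $A$, and selecting components of a submanifold already disjoint from $A$ keeps that property. The main obstacle is conceptual rather than technical: the space $\cone_{\partial A}(W)$ is not a manifold at the cone points, so we cannot perform Pontryagin--Thom directly there; the remedy is to use the cone only as a homotopical target, carrying $g$ back to the genuine manifold $W$ via $q$ and doing all transversality/smoothing on $W$ itself, where the condition $f(A)=\ast$ survives and forces $\N\cap A=\varnothing$.
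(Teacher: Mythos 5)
Your proof is correct and follows essentially the same route as the paper: both directions use the same ideas, with the forward implication obtained by realizing the factored map on $\cone_{\partial A}(W)$ and performing the Pontryagin--Thom preimage construction away from the cone vertices (the paper works directly on $\cone_{\partial A}(W)\setminus\{v_1,\ldots,v_k\}\cong W\setminus A$, which is the same as your pullback along $q$), followed by the observation that the regularization steps of Lemma \ref{lemma:connected_sum_of_submanifolds} and Theorem \ref{theorem:epimorphism_is_induced_by_regular_and_independent_system} can be carried out away from $A$.
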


\begin{proof}
	Set $H:= \geng{\pi_1(A)}^{\pi_1(W)}$. If $\mathcal{N}$ is an independent and regular system such that $\N \cap A =\varnothing$, then clearly the images in $\pi_1(W)$ of loops in $A$ are contained in the kernel of $\varphi_\N$, so $\varphi_\N$ is factorized through $\pi_1(W)/H$.
	
	Conversely, assume that $\varphi = \psi \circ \eta$, where $\eta \colon \pi_1(W) \to \pi_1(W)/H$ and ${\psi \colon \pi_1(W)/H \to F_r}$. We proceed as in the proof of Proposition \ref{proposition:homomorphisms_are_induced_by_systems}. Let $\psi$ be induced by ${f \colon \cone_{A}(W) \to \bigvee_{i=1}^r \es^1_i}$ which is a smooth map outside $\{v_1,\ldots,v_k\}$ and the inverse image of the basepoint. Take regular values $a_i \in \es^1_i$ and define 
	$$
	N_i = f^{-1}(a_i) \subset \cone_{A}(W) \setminus \{v_1,\ldots,v_k\} \cong W \setminus A.
	$$
	Thus $\mathcal{N}=(N_1,\ldots,N_r)$ is a system in $\cone_{A}(W)$ which induces $\psi$ such that $\N \cap A =\varnothing$. Clearly, as a system in $W$ it induces $\varphi$. It is easy to check that the procedures in proofs of Lemma \ref{lemma:connected_sum_of_submanifolds} and Theorem \ref{theorem:epimorphism_is_induced_by_regular_and_independent_system} give an independent and regular system $\mathcal{N}'$ inducing $\varphi$ which also satisfies $\N'\cap A = \varnothing$.
\end{proof}

The following theorem is a generalization of Theorem \ref{thm:equivalent_conditions} from \cite{Michalak-DCG}.

\begin{theorem}\label{theorem:Reeb_epi_iff_regular_and_independent}
	For an epimorphism $\varphi \colon \pi_1(W) \to F_r$ the following are equivalent:
	
	\begin{enumerate}[(1)]
		\item $\varphi = \varphi_\N$ for an independent and regular system $\N$ without boundary;
		\item $\varphi$ is factorized through $\pi_1(W)/\langle\pi_1(\partial W)\rangle^{\pi_1(W)}$;
		\item there is a Morse function $f$ (simple if $\dim W \geq 3$) on any smooth triad $(W,W_-,W_+)$ and a spanning tree $T$ in $\reeb(f)$ such that $\varphi = (p_T \circ q)_\#$, where $q\colon W \to \reeb(f)$ and $p_T\colon \reeb(f) \to \reeb(f)/T = \bigvee_{i=1}^r \es^1$ are quotient maps.
	\end{enumerate}
	Thus $$\reeb(W) = \corank\left(\pi_1(W)/\geng{\pi_1(\partial W)}^{\pi_1(W)}\right).$$
\end{theorem}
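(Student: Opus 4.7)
The plan is to prove the three conditions equivalent by a cycle (2) $\Rightarrow$ (1) $\Rightarrow$ (3) $\Rightarrow$ (2), and then to derive the formula for $\reeb(W)$ by combining the resulting equivalence with Proposition \ref{proposition:reeb_number_for_triads_and_systems_without_boundary}. The implication (2) $\Rightarrow$ (1) is immediate from Proposition \ref{proposition:cones-systems_omitting_part_of_boundary} applied with $A = \partial W$ and $B = \varnothing$: its hypothesis is precisely that $\varphi$ factorises through $\pi_1(W)/\langle\pi_1(\partial W)\rangle^{\pi_1(W)}$, and its conclusion delivers a regular and independent system $\N$ inducing $\varphi$ with $\N \cap \partial W = \varnothing$, which is to say a system without boundary.

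For (1) $\Rightarrow$ (3), I would feed $\N$ into Theorem \ref{theorem:factorization_by_Reeb_epimorphism}: for any decomposition $\partial W = W_- \sqcup W_+$, the last clause of that theorem produces a Morse function $f$ on the triad $(W, W_-, W_+)$, simple when $\dim W \geq 3$, whose Reeb graph is homeomorphic to the initial graph admissible for $(W, W_-, W_+)$ with cycle rank $r$ and $N_i$ corresponding to the edge $e_i$ outside the distinguished spanning tree $T$. The quotient map $g \colon \reeb(f) \to \reeb(f)/T$ appearing in the proof of that theorem is exactly $p_T$, and the identity $\varphi_\N = g_\# \circ q_\# = (p_T \circ q)_\#$ is established there. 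Conversely, for (3) $\Rightarrow$ (2), each connected component of $W_\pm$ is collapsed by $q$ to a single vertex of $\reeb(f)$, so every loop contained in $\partial W$ lies in $\ker q_\#$; hence $q_\#$, and therefore also $\varphi = (p_T)_\# \circ q_\#$, descends to a homomorphism defined on $\pi_1(W)/\langle\pi_1(\partial W)\rangle^{\pi_1(W)}$.

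The formula then follows from Proposition \ref{proposition:reeb_number_for_triads_and_systems_without_boundary}, which identifies $\reeb(W)$ with the maximum size of an independent and regular system without boundary in $W$; applying the equivalence (1) $\iff$ (2) to epimorphisms of every rank simultaneously identifies this maximum with $\corank(\pi_1(W)/\langle\pi_1(\partial W)\rangle^{\pi_1(W)})$. I expect no serious obstacle, as the geometric substance is already concentrated in Theorem \ref{theorem:factorization_by_Reeb_epimorphism} and Proposition \ref{proposition:cones-systems_omitting_part_of_boundary}; the only delicate point is that for surfaces the Morse function in (3) cannot always be taken simple, which is why the statement allows non-simple $f$ when $\dim W = 2$, a concession that does not disturb the spanning-tree description.
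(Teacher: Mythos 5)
Your proposal is correct and follows essentially the same route as the paper: the equivalence of (1) and (2) via Proposition \ref{proposition:cones-systems_omitting_part_of_boundary} with $A=\partial W$, the implication (1) $\Rightarrow$ (3) via the last clause of Theorem \ref{theorem:factorization_by_Reeb_epimorphism}, the easy converse (which you spell out slightly more explicitly than the paper's ``the converse is clear''), and the corank formula from Proposition \ref{proposition:reeb_number_for_triads_and_systems_without_boundary}. The only cosmetic difference is that you arrange the implications as a cycle rather than as two separate equivalences.
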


\begin{proof}
	The equivalence of (1) and (2) follows from the above proposition for $A = \partial W$. 
	If $\varphi = \varphi_\N$ for an independent and regular system $\N=(N_1,\ldots,N_r)$ without boundary, then by Theorem \ref{theorem:factorization_by_Reeb_epimorphism} there is a Morse function $f$ (simple if $\dim W \geq 3$) on $(W,W_-,W_+)$ whose Reeb graph has cycle rank equal to $r$ and components of $\N$ are components of the same level set $f^{-1}(c)$. Thus edges corresponding to components of $\N$ are outside some spanning tree $T$ of $\reeb(f)$, and so $(p_T \circ q)_\# = \varphi_\N$. This proves that (1) implies (3), and the converse is clear. 
	
	By Proposition   \ref{proposition:reeb_number_for_triads_and_systems_without_boundary}
	we get $\reeb(W) = \corank\left(\pi_1(W)/\geng{\pi_1(\partial
		W)}^{\pi_1(W)}\right)$.
\end{proof}


\subsection{Realization of system of hypersurfaces as components of level sets of function}

For a further study of Reeb epimorphisms of Morse functions we would like to have prescribed components of level sets of the function corresponding to edges outside a spanning tree of the graph.

\begin{theorem}\label{theorem:realization_of_graph_for_surfaces_with_prescribed_level_sets}
	Let $(W,W_-,W_+)$ be a smooth triad, $W_\pm = W_1^\pm \sqcup \ldots \sqcup W_{|\pi_0(W_\pm)|}^\pm$ be a decomposition into connected components and consider a regular and independent system  $\N=(N_1,\ldots,N_r)$ of hypersurfaces without boundary in $W$. Let $\Gamma$ be a finite connected graph with good orientation, whose cycle rank is equal to $r$ and which is admissible for $(W,W_-,W_+)$. Distinguish vertices $a^\pm_1,\ldots,a^\pm_{|\pi_0(W_\pm)|}$ of degree 1 in $\Gamma$, where all $a_i^-$ have indegree $0$ and all $a_i^+$ have outdegree $0$.
	Moreover, take a spanning tree $T$ of $\Gamma$ and order the edges outside $T$ as $e_1,\ldots,e_r$.
	Then there is a Morse function $f\colon W \to \R$ on the triad $(W,W_-,W_+)$, such that $\reeb(f)$ is orientation-preserving homeomorphic to $\Gamma$, each $N_i$ is a component of level set of $f$ which corresponds to the edge $e_i$ and each $W_i^\pm$ corresponds to $a_i^\pm$. Moreover, if $\dim W \geq 3$ and the maximum degree of a vertex in $\Gamma$ is not greater than $3$, then $f$ can be taken to be simple.
\end{theorem}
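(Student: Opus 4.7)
The proof has a combinatorial part and a geometric part, built on top of Theorem \ref{theorem:factorization_by_Reeb_epimorphism} and the machinery of combinatorial modifications of Reeb graphs from \cite{Michalak2}.

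First, I would apply Theorem \ref{theorem:factorization_by_Reeb_epimorphism} directly to the system $\N$. This produces a Morse function $f_0\colon W \to \R$ on the triad $(W,W_-,W_+)$ whose Reeb graph $\Gamma_0$ is the initial graph admissible for $(W,W_-,W_+)$ with cycle rank $r$, such that each $N_i$ is a connected component of a single regular level set $f_0^{-1}(c) = \N \sqcup V$ and, under the identification, corresponds to the $i$-th cycle edge $e_i^0$ of $\Gamma_0$ outside its distinguished spanning tree; the boundary components $W_i^\pm$ correspond to terminal vertices of $\Gamma_0$ of the appropriate type. The auxiliary submanifold $V$ and the edges of $\Gamma_0$ attached to it will serve as the ``free'' part on which the subsequent modifications act.

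Second, I would transform $\Gamma_0$ into $\Gamma$ by a finite sequence of combinatorial modifications (1)--(12) from \cite{Michalak2} (or the elementary deformations of \cite{Fabio-Landi} in the orientable surface case), arranged so that (i) the cycle-forming edges $e_i^0$ are never altered and are ultimately identified with $e_i$ in $\Gamma$, and (ii) the degree-one vertices of $\Gamma_0$ corresponding to $W_i^\pm$ end up at the prescribed vertices $a_i^\pm$ of $\Gamma$. The idea is that contracting the spanning trees of $\Gamma_0$ and of $\Gamma$ produces the same wedge of $r$ oriented circles with labelled loops $e_1,\ldots,e_r$, so the combinatorial discrepancy between $\Gamma_0$ and $\Gamma$ lies entirely in their tree parts together with the leaf-labelling at boundary vertices. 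Transforming one leaf-labelled tree into another by the allowed modifications, while leaving the attached cycle-edges intact, is a relative version of the Sharko-type tree realization \cite{Sharko}; it follows by the same inductive procedure as in \cite[Prop.~6.2]{Michalak2} applied to the tree parts only.

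Third, each graph-level modification lifts to a localized change of the Morse function inside a small open subset of $W$, as described in \cite{Michalak2}. Since all modifications are performed strictly within the tree parts of the intermediate graphs, the corresponding open sets can be chosen disjoint from the level set $f_0^{-1}(c)$ containing $\N$. Hence $\N$ remains a union of components of a regular level set of the resulting Morse function $f$, with each $N_i$ lying on the edge identified with $e_i$ in $\Gamma$; the final homeomorphism $\reeb(f)\cong\Gamma$ preserves orientation and the prescribed labels at the $a_i^\pm$ and $e_i$. For $\dim W \geq 3$ with all vertex degrees in $\Gamma$ at most $3$, the extra flexibility of modifications (4)--(5) together with the standard $\reeb$-simple-to-simple perturbation from \cite{Michalak2} lets $f$ be taken simple.

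The main obstacle is the relative combinatorial step of the second paragraph: producing a modification sequence that realises the prescribed leaf-labelling while never disturbing the cycle edges $e_i^0$. Additional care is needed on surfaces: for orientable surfaces the restriction to deformations without degree-two vertex manipulations forces a different bookkeeping of intermediate trees, and for non-orientable surfaces one must track orientability of the complement of the evolving level set, as already encountered in Proposition \ref{proposition:not_cobordant_if_one_complement_is_orientable_and_second_not} and Lemma \ref{lemma:H1_H3_do_not_change_orientability}.
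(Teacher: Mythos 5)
Your overall strategy differs from the paper's: you keep $\Gamma$ whole and try to modify the Reeb graph of the function supplied by Theorem \ref{theorem:factorization_by_Reeb_epimorphism} on all of $W$, relative to the cycle edges, whereas the paper first cuts $\Gamma$ along $e_1,\ldots,e_r$ to obtain a tree $\Gamma'$, realizes $\Gamma'$ on the complement $W|\N$ (with the boundary components $P_{\pm 1}(N_i)$ and $W_i^\pm$ matched to the prescribed leaves $c_i^\pm$ and $a_i^\pm$), and only then glues the tubes $P(N_i)$ back in. This difference matters, because the step you yourself flag as ``the main obstacle'' --- a relative tree realization by modifications that never disturb the cycle edges and that hit a prescribed leaf-labelling --- is precisely where the existing machinery is insufficient, and your sketch does not supply the missing tool. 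The paper does not deduce this step from modifications (1)--(12) or from the inductive procedure of \cite[Proposition~6.2]{Michalak2}: it has to introduce two new combinatorial modifications, (13) and (14), which transfer a degree-$3$ vertex of indegree $2$ past an adjacent degree-$3$ vertex of outdegree $2$, reduce to primitive trees, run an induction along an oriented path carrying a maximal number of degree-$3$ vertices, and separately verify that (13) is geometrically realizable by replacing the function on a sub-triad by an ordered Morse function without extrema (via \cite[Proposition~3.2]{Michalak2}). Asserting that the relative rearrangement ``follows by the same inductive procedure'' is therefore a genuine gap, not a routine citation.

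The surface case is also not developed in your proposal, and it is the case where your route is hardest to repair: for orientable surfaces the allowed deformations exclude degree-$2$ vertices and a ($\reeb$-)simple Morse function on an orientable surface triad has Reeb graph of maximal cycle rank, so one cannot freely reshape the tree part of a Reeb graph of a function on $W$ itself by localized modifications. The paper avoids this entirely by capping off the boundary circles of $W|\N$ to form a closed surface $Q$, invoking the closed-surface realization theorem \cite[Theorem 5.4]{Michalak-TMNA} to realize the tree $\Gamma'$ on $Q$, removing discs around the extrema corresponding to the distinguished leaves, and transporting the function back to $W|\N$ by a diffeomorphism chosen using homogeneity before extending over $P(\N)$. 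Your first step (invoking Theorem \ref{theorem:factorization_by_Reeb_epimorphism}) and your third step (localizing geometric changes away from $\N$) are sound in spirit, but the combinatorial core of the argument needs either the cut-along-$e_i$ reduction together with the new modifications (13)--(14), or an explicit substitute for them.
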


\begin{proof}
	Let $\Gamma'$ be a tree obtained from $\Gamma$ by cutting along all edges $e_i$ as it is presented in Figure \ref{figure:cutting_graph_along_edge}.
	Denote by $c^-_i$ and $c^+_i$ the vertices of $\Gamma'$ of outdegree $0$ and indegree $0$, respectively, obtained by cutting $\Gamma$ along the edge $e_i$.

	\begin{figure}[h]
		\centering
		\begin{tikzpicture}[scale=1]

			\filldraw (0,0) circle (1.7pt);
			\filldraw (0,2) circle (1.7pt);
			
			\draw (0,0)  to[out=60,in=300] (0,2);
			\draw[very thick, red] (0.1,1) -- (0.5,1);

			\draw[dotted, thick] (-0.15,-0.2) -- (-0.3,-0.4);
			\draw[dotted, thick] (-0.15,0.2) -- (-0.3,0.4);
			
			\draw[dotted, thick] (-0.15,1.8) -- (-0.3,1.6);
			\draw[dotted, thick] (-0.15,2.2) -- (-0.3,2.4);

			\draw (0.6,0.4) node {$e_i$};

			\draw [->, looseness=2, snake it ] (1.1,1) -- (1.9,1);

			\filldraw (3,0) circle (1.7pt);
			\filldraw (3,2) circle (1.7pt);
			\filldraw (3.28,0.75) circle (1.7pt) node[align=left, right] {$c_i^-$};
			\filldraw (3.28,1.25) circle (1.7pt) node[align=left, right] {$c_i^+$};
			
			\draw (3,0) to[bend right] (3.28,0.75);
			\draw (3,2) to[bend left] (3.28,1.25);

			\draw[dotted, thick] (2.85,-0.2) -- (2.7,-0.4);
			\draw[dotted, thick] (2.85,0.2) -- (2.7,0.4);
			
			\draw[dotted, thick] (2.85,1.8) -- (2.7,1.6);
			\draw[dotted, thick] (2.85,2.2) -- (2.7,2.4);

		\end{tikzpicture}
		\caption{Cutting along edge.}\label{figure:cutting_graph_along_edge}
	\end{figure}
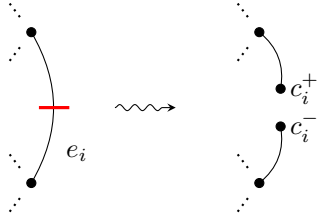

	For the case $\dim W=2$ consider the closed surface $Q$ which is formed from $W|\N$ by attaching discs to all its boundary components. By \cite[Theorem 5.4 and Remark 5.5]{Michalak-TMNA} there is a Morse function $g\colon Q \to \R$ whose Reeb graph is orientation-preserving homeomorphic~to~$\Gamma'$.

	For each vertex from $a^\pm_1,\ldots,a^\pm_{|\pi_0(W_\pm)|}$ and all $c^\pm_i$ consider a disc $D_l$ in $Q$ centre at the corresponding extremum of $g$ and whose boundary is a component of a level set of~$g$. Clearly, there is a diffeomorphism $h\colon W|\N \to Q\setminus \bigcup_l \int D_l$ and by homogeneity we may assume that it maps $W^\pm_i$ (circles $P_{\pm 1}(N_i)$) to boundaries of $D_l$ corresponding to $a^\pm_j$ (corresponding~to~$c^\pm_i$). Thus we define a Morse function $f\colon W|\N \to \R$ by $f=g\circ h$. Obviously, we can rescale $f$ to assume that its value on $P_{-1}(N_i)$ is smaller than its value on $P_{+1}(N_i)$ for each $i$. Thus we may extend $f$ on $P(\N)$ to a Morse function on $W$ whose Reeb graph, by the construction, is orientation-preserving homeomorphic to $\Gamma$, and $N_i$ corresponds to the edge $e_i$ for each $i$. Again, it can be rescaled to be a function on the triad $(W,W_-,W_+)$.

	Now, let $\dim W \geq 3$. We proceed analogously as in the proof of Theorem \ref{theorem:realization-DCG} \cite[Theorem 6.4]{Michalak-DCG} with the difference that the manifold has a boundary. However, we deal with the simplest case when graph is a tree. Steps 1 and 2 of the mentioned proof reduce the problem to the case when $\Gamma'$ has only vertices of degrees $1$ and $3$ and is primitive, i.e. there is no oriented (directed) path from a vertex with indegree $2$ to a vertex with outdegree $2$. 
	By Theorem~\ref{theorem:factorization_by_Reeb_epimorphism} for the empty system of hypersurfaces in $W|\N$ there is a simple Morse function $g\colon W|\N \to \R$ whose Reeb graph is the initial graph admissible for the triad $(W|\N,W_-\sqcup P_{1}(\N),W_+\sqcup P_{-1}(\N))$. 
	We may increase (or decrease if necessary) the number of vertices of degree $1$ by using the modifications (8) and (9), so that $\reeb(g)$ is the initial graph with the same numbers of vertices of indegree $0$ and outdegree $0$ as $\Gamma'$. Moreover, since $\Gamma'$ and $\reeb(g)$ are primitive trees, it forces the same numbers of vertices of indegree $2$ and of outdegree $2$. Thus it suffices to appropriately rearrange vertices of degree $3$ to produce $\Gamma'$ from $\reeb(g)$.

	For this purpose, we introduce the combinatorial modification number (13) presented in Figure \ref{figure:rearrangement_in_modification_(6)}, which allows us to transfer a vertex $v$ of indegree $2$ onto the second outgoing edge from a vertex $w$ of outdegee $2$ adjacent to $v$. The analogous modification for graphs with opposite orientations is numbered as (14). Since the graphs $\Gamma'$ and $\reeb(g)$ are primitive, small neighbourhoods of two adjacent vertices of degree $3$ look like in the modifications (4), (5), (13) or (14). Note that these modifications are two-sided, i.e. they work in both directions. We will show that $\Gamma'$ can be transformed to the initial graph by using them, and so $\reeb(g)$ can be transformed to $\Gamma'$ obtaining a simple Morse function $f'\colon W|\N \to \R$ whose Reeb graph is orientation-preserving homeomorphic to $\Gamma'$. Moreover, previously rearranging vertices in the initial graph $\reeb(g)$ using the modifications (4) and (5) we may ensure that the distinguished vertices of degree $1$ in the homeomorphic graphs $\Gamma'$ and $\reeb(f')$ correspond to appropriate components of $\partial (W|\N)$. Again, as in the case for surfaces, we can rescale $f'$ to assume that its value on $P_{-1}(N_i)$ is smaller than its value on $P_{+1}(N_i)$ for each $i$ and extend $f'$ on $P(\N)$ to a Morse function $f$ on $W$ whose Reeb graph, by the construction, is orientation-preserving homeomorphic to $\Gamma$, and which satisfies all desired conditions.

	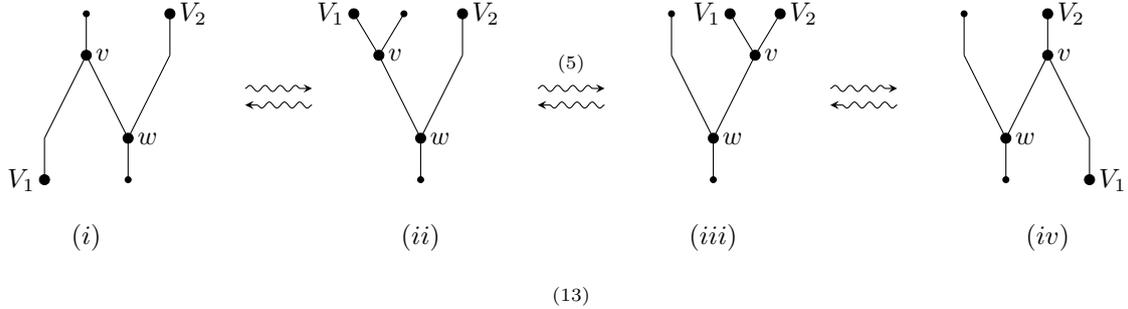
\begin{figure}[h]
		\centering
		
		\begin{tikzpicture}[scale=1.0]


			\draw (3,0) -- (3,0.5);
			\draw (2,0) -- (2,0.5);
			\draw (2,0.5) -- (2.5,1.5);
			\draw (3.5,1.5) -- (3,0.5);
			\draw (2.5,1.5) -- (3,0.5);
			\draw (3.5,1.5) -- (3.5,2);
			\draw (2.5,1.5) -- (2.5,2);

			\filldraw (2.5,1.5) circle (1.7pt)   node[align=left, right] {$v$};
			\filldraw (3,0.5) circle (1.7pt)   node[align=left, right] {$w$};
			
			\filldraw (3,0) circle (1pt);
			\filldraw (2,0) circle (1.7pt) node[align=right, left] {$V_1$};
			\filldraw (3.5,2) circle (1.7pt)  node[align=left, right] {$V_2$};
			\filldraw (2.5,2) circle (1pt);

			\draw [->, looseness=2, snake it ] (4.15,1.1) -- (4.95,1.1);
			\draw [->, looseness=2, snake it ] (4.95,0.9) -- (4.15,0.9);
			\draw (2.5,-0.7) node{($i$)};
			

			\draw (6,0) -- (6,0.5);
			\draw (6,0.5) -- (5.5,1.5);
			\draw (6.5,1.5) -- (6,0.5);
			\draw (5.5,1.5) -- (5.2,2);
			\draw (6.5,1.5) -- (6.5,2);
			\draw (5.5,1.5) -- (5.8,2);

			\filldraw (5.5,1.5) circle (1.7pt)   node[align=left, right] {$v$};
			\filldraw (6,0.5) circle (1.7pt)   node[align=left, right] {$w$};
			
			\filldraw (6,0) circle (1pt);
			\filldraw (5.2,2) circle (1.7pt)  node[align=right, left] {$V_1$};
			\filldraw (6.5,2) circle (1.7pt)  node[align=left, right] {$V_2$};
			\filldraw (5.8,2) circle (1pt);

			\draw [->, looseness=2, snake it ] (7.15,1.1) -- (7.95,1.1);
			\draw [->, looseness=2, snake it ] (7.95,0.9) -- (7.15,0.9);
			\draw (7.55,1.4) node{(5)};
			\draw (6,-0.7) node{($ii$)};
			

			\draw (9,0) -- (9,0.5);
			\draw (9,0.5) -- (8.5,1.5);
			\draw (9.5,1.5) -- (9,0.5);
			\draw (8.5,1.5) -- (8.5,2);
			\draw (9.5,1.5) -- (9.8,2);
			\draw (9.5,1.5) -- (9.2,2);

			\filldraw (9.5,1.5) circle (1.7pt)   node[align=left, right] {$v$};
			\filldraw (9,0.5) circle (1.7pt)   node[align=left, right] {$w$};
			
			\filldraw (9,0) circle (1pt);
			\filldraw (9.2,2) circle (1.7pt)  node[align=right, left] {$V_1$};
			\filldraw (9.8,2) circle (1.7pt)  node[align=left, right] {$V_2$};
			\filldraw (8.5,2) circle (1pt);

			\draw [->, looseness=2, snake it ] (10.15,1.1) -- (10.95,1.1);
			\draw [->, looseness=2, snake it ] (10.95,0.9) -- (10.15,0.9);
			\draw (9,-0.7) node{($iii$)};


			\draw (12,0) -- (12,0.5);
			\draw (12,0.5) -- (11.5,1.5);
			\draw (12.5,1.5) -- (12,0.5);
			\draw (11.5,1.5) -- (11.5,2);
			\draw (12.5,1.5) -- (12.5,2);
			\draw (12.5,1.5) -- (13,0.5);
			\draw (13,0) -- (13,0.5);

			\filldraw (12.5,1.5) circle (1.7pt)   node[align=left, right] {$v$};
			\filldraw (12,0.5) circle (1.7pt)   node[align=left, right] {$w$};
			
			\filldraw (12,0) circle (1pt);
			\filldraw (13,0) circle (1.7pt)  node[align=left, right] {$V_1$};
			\filldraw (12.5,2) circle (1.7pt)  node[align=left, right] {$V_2$};
			\filldraw (11.5,2) circle (1pt);

			\draw (12.5,-0.7) node{($iv$)};


			\draw (7.55,-1.4) node{(13)};

		\end{tikzpicture}
		\caption{The combinatorial modification number (13) of the Reeb graph of a simple Morse function. It transfers a vertex $v$ of degree $3$ and indegree $2$	onto the second outgoing edge from a vertex $w$ of degree $3$ and outdegree $2$. The situation with opposite directions of graphs leads to modification (14).}\label{figure:rearrangement_in_modification_(6)}
	\end{figure}

	Thus, take an oriented path $\tau$ in $\Gamma'$ between vertices of degree $1$ and with maximum number of vertices of degree $3$. Assume that there is a vertex $v$ of degree $3$ outside $\tau$, which is adjacent to a vertex $w$ of degree $3$ on $\tau$. Without loss of generality assume that $w$ has outdegree $2$. If $v$ also has outdegree $2$, then use the modification (5) to move $v$ on the path $\tau$. For the second case when $v$ has indegree $2$, first use (5) to move $w$ up along $\tau$ as long as it is possible. Then use the modification (13) for $v$ and $w$ to move $v$ on $\tau$. The obtained graph is still primitive and has increased number of vertices of degree $3$ on $\tau$. Repeating this procedure as long as there is a vertex of degree $3$ outside $\tau$ we obtain the initial graph.
	
	It only remains to provide the construction of the modification (13). Suppose $g\colon Q\to \R$ is a simple Morse function on a~triad $(Q,Q_-,Q_+)$ such that $\reeb(g)$ is isomorphic to the graph in Figure \ref{figure:rearrangement_in_modification_(6)} $(i)$. Let $v$ and $w$ be adjacent vertices of $\reeb(g)$ as in the figure and let $v_1$ be a vertex of degree $1$ in $\reeb(g)$ adjacent to $v$ and corresponding to the submanifold $V_1$ of $Q$, so $V_1$ is a single point (extremum of $g$) or component of $Q_-$. If it is an extremum, then use the modification (8) and then (9) to obtain the graph from Figure \ref{figure:rearrangement_in_modification_(6)} $(ii)$. Therefore assume $V_1$ is a component of $Q_-$. First, rescale the function along the edge between $v$ and $v_1$ so that the value on $v_1$ is greater than the value on $w$. Take a neighbourhood $U$ of this edge containing no other vertices than $v$ and $v_1$ such that the corresponding submanifold $S$ of $Q$ forms a triad $(S,V_1\sqcup S_1,S_2)$. Change a function $g$ on $S$ by defining a new simple and ordered Morse function on $(S,S_1,V_1\sqcup S_2)$ without critical points being extremum. By \cite[Proposition 3.2]{Michalak-DCG} this produces a function with its Reeb graph as in $(ii)$. The modification (5) leads to the case $(iii)$, and the analogous argument as before allows us to pass~to~$(iv)$. Finally, the modification (14) for a simple Morse function~$g$ can be obtained from (13) for the function $-g$.
\end{proof}

\begin{remark}\label{remark:simple_function_on_surface}
	Note that the constructed function $f$ in the proof of the above theorem in the case of surfaces can be simple if $W|\N$ is non-orientable or has genus $0$ since then the function $g$ in the proof can be taken to be simple (see \cite{Michalak-TMNA}). It is the case for example if $W$ is non-orientable of odd genus.
\end{remark}


\subsection{Realization of epimorphisms onto free groups as Reeb epimorphisms}

The previous theorem allows us to provide a
realization of an epimorphism $\pi_1(W) \to \pi_1(\Gamma)$ as the Reeb epimorphism of a Morse function on $(W,W_-,W_+)$. Note that by Theorem \ref{theorem:Reeb_epi_iff_regular_and_independent} we need to assume that it factorizes through $\pi_1(W)/\langle\pi_1(\partial W)\rangle^{\pi_1(W)}$.

\begin{theorem}\label{theorem:realization_of_Reeb_epi}
	Let $\Gamma$ be a finite connected graph with good orientation, 
	$(W,W_-,W_+)$ be a smooth triad and assume that $\varphi\colon \pi_1(W) \to \pi_1(\Gamma)$ is an epimorphism factorized through $\pi_1(W)/\langle\pi_1(\partial W)\rangle^{\pi_1(W)}$. Then there is a Morse function $f\colon W \to \R$ on $(W,W_-,W_+)$ such that $\reeb(f)$ is orientation-preserving homeomorphic to $\Gamma$ and under this identification the Reeb epimorphism of $f$ is equal to $\varphi$. Moreover, if $W$ is not a surface and the maximum degree of a vertex in $\Gamma$ is not greater than $3$, then $f$ can be simple.
\end{theorem}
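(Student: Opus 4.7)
The plan is to assemble three earlier ingredients: use the factorization hypothesis together with Proposition~\ref{proposition:cones-systems_omitting_part_of_boundary} to realize $\varphi$ as the homomorphism induced by a system of hypersurfaces without boundary, then invoke Theorem~\ref{theorem:realization_of_graph_for_surfaces_with_prescribed_level_sets} to realize $\Gamma$ as a Reeb graph with this system as prescribed level-set data, and finally check that the Reeb epimorphism so obtained agrees with $\varphi$ under the produced identification $\reeb(f)\cong\Gamma$.

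More concretely, fix a spanning tree $T$ of $\Gamma$ and order the edges of $\Gamma\setminus T$ as $e_1,\ldots,e_r$, where $r=\beta_1(\Gamma)$. The collapse map $p_T\colon \Gamma\to \Gamma/T = \bigvee_{i=1}^r \es^1_i$ induces an isomorphism $(p_T)_\#\colon \pi_1(\Gamma)\xrightarrow{\cong} F_r$, via which $\varphi$ becomes an epimorphism $\pi_1(W)\to F_r$ still factoring through $\pi_1(W)/\geng{\pi_1(\partial W)}^{\pi_1(W)}$. Applying Proposition~\ref{proposition:cones-systems_omitting_part_of_boundary} with $A=\partial W$ produces a regular and independent system $\N=(N_1,\ldots,N_r)$ of hypersurfaces without boundary in $W$ with $\varphi_\N=\varphi$. (Note that $\Gamma$ must be admissible for $(W,W_-,W_+)$, which is a necessary condition as any Reeb graph of a function on the triad is admissible; so this is tacitly assumed.) Then Theorem~\ref{theorem:realization_of_graph_for_surfaces_with_prescribed_level_sets}, applied to $\N$, $\Gamma$, $T$, and the chosen ordering, yields a Morse function $f\colon W\to\R$ on $(W,W_-,W_+)$---simple when $\dim W\geq 3$ and all vertex degrees in $\Gamma$ are at most $3$---together with an orientation-preserving homeomorphism $h\colon \reeb(f)\to\Gamma$ under which each $N_i$ is a connected component of the level set of $f$ corresponding to the edge $e_i$.

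It remains to check that $h_\#\circ q_\#=\varphi$, where $q\colon W\to\reeb(f)$ is the quotient map. Since $(p_T)_\#$ is an isomorphism identifying $\pi_1(\Gamma)$ with $F_r$, it suffices to verify $(p_T\circ h\circ q)_\# = \varphi_\N$. By the construction of $f$ in Theorem~\ref{theorem:realization_of_graph_for_surfaces_with_prescribed_level_sets}, the preimage $(p_T\circ h\circ q)^{-1}(\mathrm{pt})$ contains the complement $W|\N$, and a product neighbourhood $P(N_i)$ is mapped onto $\es^1_i$ in the orientation-preserving way determined by the framing of $N_i$; this composite is therefore homotopic rel~$\N$ to the map $f_\N$ of Definition~\ref{definition:induced by system of hyperspaces}, so on $\pi_1$ it yields $\varphi_\N=\varphi$.

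The main obstacle is precisely this last compatibility step: ensuring that the tree-collapsed Reeb quotient coincides, at the level of the induced map to $\bigvee\es^1_i$, with the Pontryagin--Thom map of $\N$. This is not automatic from the homeomorphism type of $\reeb(f)$ alone; it relies on the strong control afforded by Theorem~\ref{theorem:realization_of_graph_for_surfaces_with_prescribed_level_sets}, namely that each $N_i$ is identified with a specific component of a specific level set sitting over $e_i$, with framing aligned to the good orientation of $e_i$. Once this identification is in place the verification is essentially formal, and the simplicity statement for $\dim W\geq 3$ with maximum vertex degree at most $3$ is inherited directly from Theorem~\ref{theorem:realization_of_graph_for_surfaces_with_prescribed_level_sets}.
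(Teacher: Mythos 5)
Your proposal is correct and follows essentially the same route as the paper: collapse a spanning tree to identify $\pi_1(\Gamma)$ with $F_r$, produce a regular independent system without boundary inducing the resulting epimorphism (the paper cites Theorem~\ref{theorem:Reeb_epi_iff_regular_and_independent}, whose relevant implication is exactly Proposition~\ref{proposition:cones-systems_omitting_part_of_boundary} with $A=\partial W$), realize $\Gamma$ via Theorem~\ref{theorem:realization_of_graph_for_surfaces_with_prescribed_level_sets} with the $N_i$ sitting over the $e_i$, and cancel the isomorphism $(p_T)_\#$. Your explicit remarks on the tacit admissibility hypothesis and on why the final compatibility $(p_T\circ h\circ q)_\#=\varphi_\N$ holds are sound elaborations of what the paper compresses into ``by the construction.''
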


\begin{proof}
	Take a spanning tree $T$ of $\Gamma$ and order the edges outside $T$ by $e_1,\ldots,e_r$. Take the quotient map $p_T\colon \Gamma \to \Gamma/T = \bigvee^r_{i=1} \es^1$ which maps $e_i$ onto $i$-th circle. By Theorem \ref{theorem:Reeb_epi_iff_regular_and_independent} the epimorphism $(p_T)_\# \circ \varphi$ is induced by an independent and regular system $\N=(N_1,\ldots,N_r)$ of hypersurfaces without boundary in $W$ of size $r$. By Theorem \ref{theorem:realization_of_graph_for_surfaces_with_prescribed_level_sets} there is a Morse function $f\colon W \to \R$ whose Reeb graph is $\Gamma$ up to vertices of degree $2$ and $N_i$ corresponds to $e_i$. If $q\colon W \to \reeb(f)$ is the quotient map, then by the construction $(p_T)_\# \circ q_\# = \varphi_\N= (p_T)_\# \circ \varphi$. Since $(p_T)_\#$ is an isomorphism, $\varphi=q_\#$ is the Reeb epimorphism~of~$f$.
\end{proof}

\begin{corollary}\label{corollary:Reeb_epi_for_closed_manifolds}
	Let ${\varphi\colon \pi_1(M) \to \pi_1(\Gamma)}$ be an epimorphism, where $M$ is a closed manifold and $\Gamma$ is a finite connected graph with good orientation. Then there is a Morse function $f\colon M \to \R$ such that $\reeb(f)$ is orientation-preserving homeomorphic to $\Gamma$ and under this identification the Reeb epimorphism of $f$ is equal to $\varphi$. \qed 
\end{corollary}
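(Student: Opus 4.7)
The plan is to derive this corollary as an immediate specialization of Theorem \ref{theorem:realization_of_Reeb_epi} to the case of empty boundary. Since $M$ is closed, we have $\partial M = \varnothing$, and so $\pi_1(\partial M)$ is (vacuously) trivial. Consequently the normal subgroup $\langle\pi_1(\partial M)\rangle^{\pi_1(M)}$ is the trivial subgroup, which means that the quotient $\pi_1(M)/\langle\pi_1(\partial M)\rangle^{\pi_1(M)}$ coincides with $\pi_1(M)$ itself. In particular, the factorization condition in Theorem \ref{theorem:realization_of_Reeb_epi} is automatically satisfied for every epimorphism $\varphi\colon \pi_1(M)\to\pi_1(\Gamma)$.

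I would then apply Theorem \ref{theorem:realization_of_Reeb_epi} to the trivial smooth triad $(M,\varnothing,\varnothing)$, which is a legitimate triad since $\partial M = \varnothing = \varnothing \sqcup \varnothing$. The theorem produces a Morse function $f\colon M\to \R$ whose Reeb graph is orientation-preserving homeomorphic to $\Gamma$ and whose Reeb epimorphism agrees with $\varphi$ under this identification, which is exactly the desired conclusion. Note that we do not need to invoke the additional clause about simplicity of $f$; if desired, one could also observe that when $M$ is not a surface and the maximum degree of a vertex in $\Gamma$ is at most $3$, Theorem \ref{theorem:realization_of_Reeb_epi} allows $f$ to be chosen simple, but this is not asserted in the corollary.

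There is essentially no obstacle to overcome here: the only thing to verify is the triviality of the factorization hypothesis, which follows from $\partial M = \varnothing$. All the substantive work has already been carried out in Theorems \ref{theorem:Reeb_epi_iff_regular_and_independent} and \ref{theorem:realization_of_graph_for_surfaces_with_prescribed_level_sets}, which together yield Theorem \ref{theorem:realization_of_Reeb_epi}, so the corollary follows in one line.
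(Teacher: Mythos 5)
Your proposal is correct and matches the paper's intent exactly: the corollary is stated with an immediate \qed precisely because, for a closed manifold, $\langle\pi_1(\partial M)\rangle^{\pi_1(M)}$ is trivial and the factorization hypothesis of Theorem \ref{theorem:realization_of_Reeb_epi} is vacuous, so applying that theorem to the triad $(M,\varnothing,\varnothing)$ gives the result. Nothing is missing.
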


\begin{remark}\label{remark:rigorous_representation-id_only_automorphism}
	The Reeb epimorphism of $f$ does not represent a unique epimorphism $\pi_1(W)\to \pi_1(\Gamma)$ in general, because it depends on the homeomorphism between $\Gamma$ and $\reeb(f)$. It is unique for oriented graphs such that the identity map is the only orientation-preserving automorphism. Otherwise, Theorem \ref{theorem:realization_of_graph_for_surfaces_with_prescribed_level_sets} provides more rigorous representation of a Reeb epimorphism if some additional data is given. For instance, assume that there are distinguished edges $e_1,\ldots,e_r$ outside a spanning tree $T$ of $\Gamma$ and a regular and independent system $\N$ of hypersurfaces inducing $(p_T)_\# \circ \varphi$. Then the condition that each $N_i$ corresponds to $e_i$ implies the uniqueness of an epimorphism $\pi_1(W)\to \pi_1(\Gamma)$ represented by the Reeb epimorphism of $f$.
\end{remark}

\begin{remark}\label{remark:Saeki}
	Independently, O. Saeki \cite{Saeki_Reeb_spaces} has proven a similar result for a closed manifold $M$, which provides a representation of an epimorphism $\varphi \colon \pi_1(M) \to \pi_1(\Gamma)$, for any finite graph $\Gamma$ without loops, as the Reeb epimorphism of a smooth function with finitely many critical values. In addition, since the constructed function has degenerate critical points, it can realize $\Gamma$ as the Reeb graph up to isomorphism of graphs. However, the number of vertices of degree $2$ in the Reeb graph of a Morse function cannot be arbitrary (see, for instance \cite[Theorem 5.6]{Michalak-TMNA}), thus we need to ignore them in our construction of a Morse function. Moreover, the above theorem together with Theorem \ref{theorem:realization_of_graph_for_surfaces_with_prescribed_level_sets} provide more rigorous representation of $\varphi$ as the Reeb epimorphism, since we control the components of level sets of the function. It can be crucial in applications of Reeb epimorphisms, e.g. in topological conjugacy of Morse functions (see Section \ref{section:conjugacy_of_Morse_functions}). Finally, our result deals also with manifolds with boundary.
\end{remark}

We showed that for manifolds of dimension at least $3$ any epimorphism $\pi_1(M)\to \pi_1(\Gamma)$ is represented as the Reeb epimorphism of a simple Morse function provided that $\Gamma$ satisfies necessary conditions: it has a good orientation and the maximum degree of its vertices is not greater than $3$. Now, let us investigate when in Theorem \ref{theorem:realization_of_Reeb_epi} one can take a simple Morse function in the case of surfaces. 

\begin{lemma}\label{lemma:simple_iff_maximal_cycle_rank}
	Let $\Sigma$ be a closed surface of Euler characteristic $\chi(\Sigma) = 2-2g$, so it is orientable of genus $g$ or non-orientable of genus $2g$, and let $f\colon \Sigma \to \R$ be a Morse function such that the maximum degree of a vertex in $\reeb(f)$ is not greater than $3$.
	Then $\beta_1(\reeb(f))=g$ if and only if $f$ is $\reeb$-simple and $\reeb(f)$ has no vertices of degree~$2$.
\end{lemma}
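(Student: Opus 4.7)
The plan is to derive a single Euler-type formula
\[
\beta_1(\reeb(f)) \;=\; g - \tfrac{1}{2}\mu_{\mathrm{tot}},
\]
where $\mu_{\mathrm{tot}}$ is a non-negative integer encoding a ``genus defect'' contributed by each critical level-set component of $f$, and then read off the equivalence from the vanishing of $\mu_{\mathrm{tot}}$. Write $V_1, V_2, V_3$ for the numbers of vertices of $\reeb(f)$ of the respective degrees; the handshake lemma $V_1 + 2 V_2 + 3 V_3 = 2 E$ combined with $\beta_1 = E - V + 1$ yields directly
\[
\beta_1(\reeb(f)) \;=\; \tfrac{1}{2}(V_3 - V_1) + 1,
\]
so the task reduces to expressing $V_3 - V_1$ in terms of $g$.

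The core step is a local analysis at each vertex $v$ of $\reeb(f)$ of degree $d_v \in \{2, 3\}$. Let $C_v \subset \Sigma$ be the corresponding level-set component; it contains $n_v \geq 1$ saddles (index-$1$ critical points) and is a $4$-regular graph off these critical points, giving $\chi(C_v) = n_v - 2n_v = -n_v$. A small regular neighbourhood $N_v$ of $C_v$ deformation retracts to $C_v$, hence $\chi(N_v) = -n_v$, and $\partial N_v$ consists of exactly $d_v$ circles, one component of a nearby regular level $f^{-1}(c_v \pm \varepsilon) \cap N_v$ per edge of $\reeb(f)$ incident to $v$. The classification of compact surfaces with boundary gives $\chi(N_v) = 2 - \mu_v - d_v$, where I set $\mu_v := 2 g_{N_v}$ if $N_v$ is orientable and $\mu_v := k_{N_v}$ otherwise, so $\mu_v \geq 0$. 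Equating the two expressions for $\chi(N_v)$ yields the key identity
\[
n_v \;=\; d_v - 2 + \mu_v.
\]
Vertices of degree $1$ are isolated extrema and each accounts for a single critical point of index $0$ or $2$, so $V_1 = c_0 + c_2$.

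Substituting into the Morse identity $c_0 - c_1 + c_2 = \chi(\Sigma) = 2 - 2g$ together with $c_1 = \sum_{\deg v \geq 2} n_v = V_3 + \mu_{\mathrm{tot}}$, where $\mu_{\mathrm{tot}} := \sum_{\deg v \geq 2} \mu_v \geq 0$, gives $V_1 - V_3 - \mu_{\mathrm{tot}} = 2 - 2g$ and hence the master formula $\beta_1(\reeb(f)) = g - \tfrac{1}{2}\mu_{\mathrm{tot}}$. Both directions of the lemma follow immediately. If $\beta_1 = g$, then $\mu_{\mathrm{tot}} = 0$ forces $\mu_v = 0$ at every vertex of degree $\geq 2$; for $d_v = 2$ this would give $n_v = 0$, which is impossible, so no degree-$2$ vertex can occur, while $d_v = 3$ with $\mu_v = 0$ forces $n_v = 1$, i.e., $f$ is $\reeb$-simple. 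Conversely, $\reeb$-simplicity combined with the absence of degree-$2$ vertices makes every non-extremum vertex satisfy $n_v = 1, d_v = 3, \mu_v = 0$, so $\mu_{\mathrm{tot}} = 0$ and $\beta_1 = g$. I expect the main obstacle to be justifying the local identity $n_v = d_v - 2 + \mu_v$ cleanly: it requires pinning down $C_v$ as a $4$-regular embedded graph, $N_v$ as its regular neighbourhood, and the identification of the components of $\partial N_v$ with the edges of $\reeb(f)$ at $v$. Once these are established, the rest of the proof is pure Euler-characteristic bookkeeping.
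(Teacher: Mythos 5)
Your proof is correct, and its global skeleton---the handshake lemma, $\beta_1(\reeb(f))=|E|-|V|+1$, the Morse index identity $c_0-c_1+c_2=\chi(\Sigma)$, and the identification $V_1=c_0+c_2$ of degree-$1$ vertices with extrema---is exactly the paper's. The one place you diverge is the step relating $c_1$ to the vertex counts. The paper uses only the counting inequality $c_1\geq V_2+V_3$ (each vertex of degree $2$ or $3$ contains at least one saddle), derives $2\left(\beta_1(\reeb(f))-g\right)=V_3-c_1$ from the same bookkeeping, and reads the lemma off the equality case. You instead establish the exact local identity $n_v=d_v-2+\mu_v$ by computing $\chi(N_v)$ in two ways, and summation gives the sharper formula $\beta_1(\reeb(f))=g-\tfrac{1}{2}\mu_{\mathrm{tot}}$. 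Your route requires more geometric input---that $C_v$ is a connected graph with $4$-valent vertices at the saddles, that $N_v$ deformation retracts onto it with $\partial N_v$ consisting of exactly $d_v$ circles, and the classification of compact surfaces with boundary---but all of these are standard for Morse functions on surfaces and pose no real difficulty. What the extra work buys is an interpretation of the defect $g-\beta_1(\reeb(f))$ as half the total genus of the critical-level neighbourhoods, together with an explanation of \emph{why} a degree-$2$ vertex forces its neighbourhood to be non-orientable or of positive genus ($\mu_v\geq 1$), rather than merely that it contributes an unwanted saddle. The paper's argument is more elementary at this step and suffices for the lemma; yours proves slightly more.
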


\begin{proof}
	Let $V$ and $E$ be the sets of vertices and edges of $\reeb(f)$, respectively. Then $\beta_1(\reeb(f)) = |E|-|V|+1$. Moreover, if $k_i$ is the number of critical points of $f$ of index $i$, then $\chi(\Sigma)=k_0-k_1+k_2$ and the number of vertices of degree $1$ in $\reeb(f)$ is equal to $k_0+k_2$. 
	Let $\Delta_2$ and $\Delta_3$ be the numbers of vertices of degree $2$ and $3$, respectively. Note that $2|E| = \sum_{v\in V} \deg(v) = k_0+k_2+2\Delta_2+3\Delta_3$ by Euler's handshaking lemma.
	Combining these equalities we obtain
	$$
	2\left(\beta_1(\reeb(f))-g\right) = 2|E|-2|V| + \chi(\Sigma) =  \Delta_3-k_1.
	$$
	Thus $\beta_1(\reeb(f))=g$ if and only if $k_1=\Delta_3$. However, $k_1 \geq \Delta_2 + \Delta_3$, so $k_1=\Delta_3$ if and only if $\Delta_2=0$ and each vertex of $\reeb(f)$ corresponds to a single critical point of $f$, so $f$ is $\reeb$-simple.	
\end{proof}

Note that a simple Morse function on a non-orientable surface of odd genus has always a vertex of degree $2$ in its Reeb graph.

We know that simple Morse functions on a closed orientable surface of genus $g$ have a unique property that their Reeb graphs have cycle rank equal to $g$. For a non-orientable surface $\Sigma$ one can construct simple Morse functions with Reeb graphs having arbitrary cycle rank between $0$ and $\reeb(\Sigma)$ (cf. Theorem \ref{thm:equivalent_conditions}). However, it turns out that simple Morse functions have also some unique property for non-orientable surfaces of even genus, which can be described in terms of Reeb epimorphisms.

\begin{proposition}\label{proposition:unique_strong_equi_class_for_simple_Morse_functions}
	Let $\Gamma$ be a finite connected graph with good orientation such that $\beta_1(\Gamma) < g$. Then there is a unique strong equivalence class $\Xi$ of epimorphisms $\pi_1(S_{2g})\to \pi_1(\Gamma)$ such that for any simple Morse function $f\colon S_{2g}\to \R$  having $\reeb(f) = \Gamma$ its Reeb epimorphism belongs to $\Xi$.	
\end{proposition}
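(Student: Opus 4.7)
The plan is to combine the counting formula of Lemma~\ref{lemma:simple_iff_maximal_cycle_rank} with the classification of $\H^{fr}_r(S_{2g})/_{\diffd(S_{2g})}$ in Theorem~\ref{theorem:calculations_of_cobordisms_for_surfaces}. The heart of the argument is to show that for any simple Morse function $f$ on $S_{2g}$ with $\reeb(f)=\Gamma$, the regular independent system of hypersurfaces extracted from $f$ (via an appropriately chosen spanning tree) always has \emph{non-orientable} complement, hence always represents a single framed cobordism class up to diffeomorphism, namely $[\N_0]$.

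I would begin by choosing a spanning tree $T$ of $\Gamma$ in which every degree-$2$ vertex is internal (both of its incident edges lie in $T$). Such a $T$ exists because, by the good orientation, a degree-$2$ vertex has strict indegree and outdegree $1$, so its incoming and outgoing edges have distinct other endpoints; smoothing all degree-$2$ vertices therefore produces a graph $\bar\Gamma$ of the same first Betti number without loops, and any spanning tree of $\bar\Gamma$ lifts to such a $T$. Order the edges of $\Gamma$ outside $T$ as $e_1,\dots,e_r$ and let $p_T\colon\Gamma\to\bigvee_{i=1}^r\es^1_i$ be the associated quotient map. For a simple Morse function $f$ with $\reeb(f)=\Gamma$, the proof of Theorem~\ref{theorem:Reeb_epi_iff_regular_and_independent} produces a regular and independent system $\N=(N_1,\dots,N_r)$, where each $N_i$ is a component of a regular level set of $f$ over the interior of $e_i$, such that $(p_T)_\#\circ q_\#=\varphi_\N$ with $q_\#$ the Reeb epimorphism of $f$.

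The central claim is that $S_{2g}|\N$ is non-orientable. By Lemma~\ref{lemma:simple_iff_maximal_cycle_rank} applied to $f$ (whose Reeb graph has maximum vertex degree at most $3$), the assumption $\beta_1(\Gamma)=r<g$ forces exactly $\Delta_2=2(g-r)>0$ degree-$2$ vertices. Pick such a vertex $v$ with corresponding index-$1$ critical point $p$ (of the non-orientable saddle type), set $c=f(p)$, and let $S_v$ be the connected component of $f^{-1}([c-\varepsilon,c+\varepsilon])$ containing $p$ for small $\varepsilon$. Since $v$ has indegree and outdegree $1$, the boundary $\partial S_v$ consists of exactly two circles, and a direct Euler-characteristic computation gluing the saddle disk to the single below-tube and single above-tube along four arcs gives $\chi(S_v)=-1$. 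By the classification of compact surfaces, a connected surface with two boundary circles and $\chi=-1$ is necessarily non-orientable (concretely, $\rp$ with two open discs removed); in particular $S_v$ contains a M\"obius band. By our choice of $T$, both edges incident to $v$ lie in $T$, so $c$ is separated from the regular values defining the $N_i$ by other critical levels; hence for small enough $\varepsilon$ and sufficiently thin $P(N_i)$ we get $S_v\cap\bigcup_iP(N_i)=\varnothing$, so $S_v\subset S_{2g}|\N$, proving $S_{2g}|\N$ is non-orientable.

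To conclude, Theorem~\ref{theorem:calculations_of_cobordisms_for_surfaces} in the case $\Sigma=S_{2g}$ with $r<m=g$, combined with Proposition~\ref{proposition:not_cobordant_if_one_complement_is_orientable_and_second_not}, identifies $[\N_0]$ as the unique element of $\H^{fr}_r(S_{2g})/_{\diffd(S_{2g})}$ whose representatives have non-orientable complement. Therefore $[\N]=[\N_0]$ independently of $f$, and the bijection $\overline{\overline{\Theta}}$ of Proposition~\ref{proposition:bijection_for_surfaces_epi_and_systems} shows that $(p_T)_\#\circ q_\#$ lies in a single strong equivalence class $\Xi_0\subset\epi(\pi_1(S_{2g}),F_r)/_\simeq$ for every admissible $f$. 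Pulling back along the isomorphism $(p_T)_\#$ yields the desired unique strong equivalence class $\Xi\subset\epi(\pi_1(S_{2g}),\pi_1(\Gamma))/_\simeq$ containing every Reeb epimorphism $q_\#$. The main obstacle will be the geometric identification of the non-orientable surface $S_v$ at each degree-$2$ critical point and verifying it remains entirely inside $S_{2g}|\N$ after the chosen cuts; once this local analysis is in place, the rest is a straightforward translation between the already established classifications of systems and of epimorphisms.
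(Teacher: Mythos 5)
Your overall strategy matches the paper's: everything reduces to showing that the complement $S_{2g}|\N$ of the system of level-set components extracted from $f$ is non-orientable, after which Proposition~\ref{proposition:not_cobordant_if_one_complement_is_orientable_and_second_not}, Theorem~\ref{theorem:calculations_of_cobordisms_for_surfaces} and the bijection $\overline{\overline{\Theta}}$ of Proposition~\ref{proposition:bijection_for_surfaces_epi_and_systems} force a single strong equivalence class. Where you differ is in the proof of that key claim. The paper argues globally and by contradiction: if $S_{2g}|\N$ were orientable, then $f|_{S_{2g}|\N}$ is a simple Morse function whose Reeb graph is a tree, and by the special property of simple Morse functions on orientable surfaces this forces $S_{2g}|\N$ to have genus $0$, hence to be a sphere with $2r$ discs removed, hence $r=g$, contradicting $r<g$. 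You argue locally and constructively: Lemma~\ref{lemma:simple_iff_maximal_cycle_rank} gives $\Delta_2=2(g-r)>0$ degree-$2$ vertices, and the elementary cobordism $S_v$ at such a vertex is a connected surface with two boundary circles and $\chi(S_v)=-1$, hence a copy of $\rp$ with two discs removed sitting inside $S_{2g}|\N$. Both arguments are correct; yours exhibits an explicit cross-cap in the complement, while the paper's is shorter and avoids any local analysis.

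There is, however, one flawed sub-step. The spanning tree $T$ in which every degree-$2$ vertex is internal need not exist: if a cycle of $\Gamma$ alternates degree-$2$ and degree-$3$ vertices, every edge of that cycle is incident to a degree-$2$ vertex, yet any spanning tree must omit one of them. Your proposed construction also does not produce a spanning tree of $\Gamma$: lifting a spanning tree of the smoothed graph $\bar\Gamma$ leaves out the degree-$2$ vertices lying on the suppressed non-tree edges. Fortunately this step is unnecessary. For any spanning tree, each $N_i$ is a component of $f^{-1}(c_i)$ for a regular value $c_i$ in the interior of the interval of values of the edge $e_i$, while $S_v$ is the component of $f^{-1}([c-\varepsilon,c+\varepsilon])$ through the critical point $p$ with critical value $c$; since $c_i\neq c$, shrinking $\varepsilon$ and the product neighbourhoods $P(N_i)$ already gives $S_v\cap\bigcup_i P(N_i)=\varnothing$. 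With that repair your argument goes through.
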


\begin{proof}
	Let $r:= \beta_1(\reeb(f))$ and $\N=(N_1,\ldots,N_r)$ be an independent and regular system of hypersurfaces in $S_{2g}$ which are connected components of level sets of $f$ and which correspond to edges outside some spanning tree of $\reeb(f)$.  We claim that $S_{2g}|\N$ is non-orientable. Thus assume that $S_{2g}|\N$ is orientable. First, note that $f|_{S_{2g}|\N}$ is a simple Morse function and its Reeb graph $\reeb(f|_{S_{2g}|\N})$ is a tree. Therefore $S_{2g}|\N$ has a genus $0$ as a surface with boundary, i.e. it is a sphere with discs removed. This implies that $r=g$, a~contradiction.
	
	Therefore as in the proof of Theorem \ref{theorem:calculations_of_cobordisms_for_surfaces} any two Reeb epimorphisms of simple Morse functions $S_{2g}\to \R$ with $\Gamma$ as Reeb graphs are strongly equivalent and $\Xi$ is represented by a system of hypersurfaces whose complement is non-orientable.
\end{proof}

Note that in fact the above proposition is also true for any non-orientable surface $S_{2g+1}$ of odd genus since by Theorem \ref{thm:grigorchuk} there is only one strong equivalence class of epimorphisms $\pi_1(S_{2g+1})\to \pi_1(\Gamma)$.

The following corollary follows by Remark \ref{remark:simple_function_on_surface}, Theorem \ref{theorem:realization_of_Reeb_epi}, Lemma \ref{lemma:simple_iff_maximal_cycle_rank} and Proposition \ref{proposition:unique_strong_equi_class_for_simple_Morse_functions}.

\begin{corollary}\label{corollary:Reeb_epi_for_simple_Morse}
	Let $\Gamma$ be a finite connected graph with good orientation such that the maximum degree of its vertex is not greater than $3$, let $\Sigma$ be a closed surface and $\mathcal{R}_{epi}(\Sigma)$ be the set of all Reeb epimorphisms of simple Morse functions on $\Sigma$. Take an epimorphism $\psi\colon \pi_1(\Sigma) \to \pi_1(\Gamma)$.
	\begin{itemize}
		\item If $\Sigma$ is orientable of genus $g$, then $\psi \in \reeb_{epi}(\Sigma)$ if and only if $\beta_1(\Gamma)=g$.
		\item If $\Sigma$ is non-orientable of odd genus, then any $\psi \in \reeb_{epi}(\Sigma)$.
		\item If  $\Sigma$ is non-orientable of even genus $2g$, then $\psi \in \reeb_{epi}(\Sigma)$ if and only if $\beta_1(\Gamma)=g$, or $\beta_1(\Gamma)<g$ and $\psi$ belongs to a unique strong equivalence class $\Xi$ of epimorphisms $\pi_1(\Sigma) \to \pi_1(\Gamma)$ represented by systems of hypersurfaces whose complements are non-orientable.
	\end{itemize}
	\qed
\end{corollary}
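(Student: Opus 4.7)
My plan is to treat each of the three cases in turn, establishing necessity and sufficiency by combining Theorem \ref{theorem:realization_of_Reeb_epi}, Lemma \ref{lemma:simple_iff_maximal_cycle_rank}, Proposition \ref{proposition:unique_strong_equi_class_for_simple_Morse_functions} and Remark \ref{remark:simple_function_on_surface}, with the $\reeb$-simple to simple perturbation (described right after the definition of $\reeb$-simple) always available at the end.

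For the orientable case $\Sigma = \Sigma_g$, necessity is immediate from the classical fact, cited after Theorem \ref{thm:equivalent_conditions}, that any simple Morse function on $\Sigma_g$ has Reeb graph of cycle rank exactly $g$; hence $\reeb(f) \cong \Gamma$ forces $\beta_1(\Gamma) = g$. For sufficiency, given $\psi$ with $\beta_1(\Gamma) = g$, Theorem \ref{theorem:realization_of_Reeb_epi} produces a Morse function $f$ with $\reeb(f)$ orientation-preservingly homeomorphic to $\Gamma$ and Reeb epimorphism $\psi$. Since having a local degree $\geq 4$ at a point is topologically detectable, the graph $\reeb(f)$ inherits from $\Gamma$ the property that every vertex has degree at most $3$, so Lemma \ref{lemma:simple_iff_maximal_cycle_rank} applies and the equality $\beta_1(\reeb(f)) = g$ forces $f$ to be $\reeb$-simple with no degree-$2$ vertex in $\reeb(f)$; a small perturbation then converts $f$ into a genuine simple Morse function with the same Reeb graph and Reeb epimorphism.

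For the non-orientable odd genus case $\Sigma = S_{2g+1}$, Theorem \ref{theorem:realization_of_Reeb_epi}, via the construction in Theorem \ref{theorem:realization_of_graph_for_surfaces_with_prescribed_level_sets}, furnishes a Morse function $f$ built from an independent regular system $\N$ of circles without boundary. Since each tubular neighbourhood $N_i \times [-1,1]$ and each of its two boundary circles has Euler characteristic zero, Mayer--Vietoris yields $\chi(\Sigma|\N) = \chi(\Sigma) = 1 - 2g$, which is odd; on the other hand, any compact orientable surface with $2r$ boundary components has even Euler characteristic $2 - 2g' - 2r$. Hence $\Sigma|\N$ must be non-orientable, and Remark \ref{remark:simple_function_on_surface} permits us to take $f$ simple.

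For the non-orientable even genus case $\Sigma = S_{2g}$ we split by $\beta_1(\Gamma)$. If $\beta_1(\Gamma) = g$, the orientable-case argument via Lemma \ref{lemma:simple_iff_maximal_cycle_rank} and perturbation goes through verbatim (alternatively, here $\Sigma|\N$ is the sphere with $2g$ discs removed, of genus $0$, so Remark \ref{remark:simple_function_on_surface} also applies directly). If $\beta_1(\Gamma) < g$, necessity is exactly Proposition \ref{proposition:unique_strong_equi_class_for_simple_Morse_functions}; for sufficiency, given $\psi \in \Xi$ we invoke Dehn--Nielsen as in the proof of Proposition \ref{proposition:bijection_for_surfaces_epi_and_systems} to promote the class-level description of $\Xi$ (by systems whose complement is non-orientable) to a concrete system $\N$ satisfying $\varphi_\N = \psi$ on the nose and with $\Sigma|\N$ non-orientable, after which Remark \ref{remark:simple_function_on_surface} combined with Theorem \ref{theorem:realization_of_graph_for_surfaces_with_prescribed_level_sets} supplies a simple Morse function whose Reeb epimorphism equals $\psi$. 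The main delicate point is precisely this last passage from the strong equivalence class $\Xi$ to a concrete representative implementing $\psi$, where Dehn--Nielsen is essential to convert the required automorphism of $\pi_1(\Sigma)$ into a diffeomorphism acting on the system.
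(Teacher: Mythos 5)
Your proof is correct and follows exactly the route the paper intends: the corollary is stated there with only a one-line attribution to Remark \ref{remark:simple_function_on_surface}, Theorem \ref{theorem:realization_of_Reeb_epi}, Lemma \ref{lemma:simple_iff_maximal_cycle_rank} and Proposition \ref{proposition:unique_strong_equi_class_for_simple_Morse_functions}, and you assemble precisely these ingredients (together with the $\reeb$-simple-to-simple perturbation and the Dehn--Nielsen step already used in Proposition \ref{proposition:bijection_for_surfaces_epi_and_systems}) in the expected way. Your write-up in fact supplies the case-by-case details the paper leaves implicit, including the correct use of the Euler-characteristic parity argument for odd genus and the passage from the class $\Xi$ to a concrete inducing system with non-orientable complement.
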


\subsection{Extendability of independent systems of hypersurfaces}\label{section:extendability_of_systems}

Let $\mathcal{N}=(N_1,\ldots,N_r)$ be an independent and regular system of hypersurfaces in $W$. We say that $\mathcal{N}$ is \emph{extended} by a system $\mathcal{N'}$ if $\N'$ is also a regular and independent system such that $\N\subset \N'$ and their framings determine the same orientation of the normal bundle of $\N$ in $W$.

\begin{proposition} \label{proposition:size_of_maximum_extension_of_system}
	Let $\mathcal{N}$ be an independent and regular system without boundary in $W$ of size $r$. Then
	$$
	\corank\left(\pi_1(W)/\geng{\pi_1(\N)}^{\pi_1(W)}\right) = \corank\left(\pi_1(W|\N)/\geng{\pi_1(\partial P(\N))}^{\pi_1(W|\N)}\right) +r
	$$
	and it is the maximum size of  an independent and regular system
	without boundary in $W$ which extends $\mathcal{N}$. In particular,
	for a closed manifold $M$ we get
	$$
	\reeb(M|\N) = \corank\left(\pi_1(M)/\geng{\pi_1(\N)}^{\pi_1(M)}\right) -r.
	$$
\end{proposition}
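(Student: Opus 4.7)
The plan is to split the proof into two parts: first the algebraic identity between the two coranks, established via Seifert--van Kampen and Grushko's theorem, and second the geometric identification of this common value with the maximum size of an extension of $\N$.

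For the algebraic identity, I would present $W$ as the pushout $(W|\N) \cup_{\partial P(\N)} P(\N)$, where $P(\N) \cong \bigsqcup_i N_i \times [-1,1]$ is attached along the two boundary copies via the inclusions $\iota_i^\pm\colon N_i \hookrightarrow W|\N$. Applying Seifert--van Kampen once for each $N_i$ exhibits $\pi_1(W)$ as an iterated HNN extension of $\pi_1(W|\N)$, with amalgamating subgroups $\iota_i^\pm(\pi_1(N_i))$ and stable letters $t_1, \ldots, t_r$ realizing the conjugation. Modding out by $\geng{\pi_1(\N)}^{\pi_1(W)}$ kills each $\iota_i^\pm(\pi_1(N_i))$ in the quotient of $\pi_1(W|\N)$, so all HNN relations become trivial and the stable letters generate a free factor. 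This yields
$$
\pi_1(W)\big/\geng{\pi_1(\N)}^{\pi_1(W)}\ \cong\ \bigl(\pi_1(W|\N)\big/\geng{\pi_1(\partial P(\N))}^{\pi_1(W|\N)}\bigr) * F_r,
$$
and Grushko's decomposition theorem (giving $\corank(A * F_r) = \corank(A) + r$) yields the claimed equality.

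For the max extension, the upper bound is almost tautological: if $\N'$ extends $\N$ with $|\N'| = m$, then for each $i$ and any loop $\alpha \subset N_i$ we can push $\alpha$ into a thin slice $P_\varepsilon(N_i)$ (taking the product neighborhood $P(N_i)$ small enough to be disjoint from $\N' \setminus N_i$), obtaining a loop in $W \setminus \N'$, so $\varphi_{\N'}([\alpha]) = 1$. Thus $\pi_1(\N) \subset \ker \varphi_{\N'}$ and $\varphi_{\N'}$ descends to an epimorphism from $\pi_1(W)/\geng{\pi_1(\N)}^{\pi_1(W)}$ onto $F_m$, forcing $m \leq \corank(\pi_1(W)/\geng{\pi_1(\N)}^{\pi_1(W)})$. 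For the lower bound, let $m$ denote this corank and pick a maximum-rank epimorphism $\bar\psi\colon \pi_1(W|\N)/\geng{\pi_1(\partial P(\N))}^{\pi_1(W|\N)} \to F_{m-r}$. By Proposition \ref{proposition:cones-systems_omitting_part_of_boundary} applied to $W|\N$ with $A = \partial P(\N)$, the composition $\pi_1(W|\N) \to F_{m-r}$ is induced by a regular independent system $\N''$ in $W|\N$ with $\N'' \cap \partial P(\N) = \varnothing$. Setting $\N' := \N \cup \N''$ produces a regular independent extension of $\N$ in $W$ of size $r + (m-r) = m$.

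The ``in particular'' statement for closed $M$ is immediate: since $\partial M = \varnothing$ forces $\partial(M|\N) = \partial P(\N)$, Theorem \ref{theorem:Reeb_epi_iff_regular_and_independent} applied to $M|\N$ identifies the corank on the right-hand side of the main formula with $\reeb(M|\N)$. The main obstacle I anticipate is controlling the lower bound construction, ensuring that $\N''$ chosen in $W|\N$ combines cleanly with $\N$ to form a system in $W$ of the desired size whose induced epimorphism is surjective of rank $m$; the key observations are that independence of $\N'$ in $W$ is equivalent to independence of $\N''$ in $W|\N$ (since $W|\N' = (W|\N)|\N''$), and that the framings of $\N$ and $\N''$ are unaffected by one another because they are disjoint.
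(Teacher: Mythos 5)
Your proposal follows essentially the same route as the paper's proof: the algebraic identity is obtained by exhibiting $\pi_1(W)$ as an (iterated) HNN extension of $\pi_1(W|\N)$ with $r$ stable letters, observing that the quotient by $\geng{\pi_1(\N)}^{\pi_1(W)}$ becomes a free product of $\pi_1(W|\N)/\geng{\pi_1(\partial P(\N))}^{\pi_1(W|\N)}$ with a free group of rank $r$, and then using additivity of corank under free products (the paper quotes $\corank(G*H)=\corank(G)+\corank(H)$ from Cornea where you invoke Grushko --- the same fact). For the max-extension claim the paper gives only a one-line reference to Proposition \ref{proposition:cones-systems_omitting_part_of_boundary}; your upper bound (pushing loops of $\N$ into a thin slice disjoint from $\N'$) and your lower bound via that proposition, together with the observation $W|\N'=(W|\N)|\N''$, are precisely the details the paper leaves implicit.

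One caveat that your fleshed-out lower bound makes visible (and which is inherited from the paper's own terse argument): Proposition \ref{proposition:cones-systems_omitting_part_of_boundary} applied to $W|\N$ with $A=\partial P(\N)$ and $B=\partial W$ only guarantees $\N''\cap\partial P(\N)=\varnothing$, so components of $\N''$ may have boundary on $\partial W$, and then $\N'=\N\cup\N''$ is a regular independent extension that is \emph{not} without boundary as the statement literally requires. When $\partial W\neq\varnothing$ the literal statement can fail: for $W=\Sigma_{1,2}$ and $\N$ a single non-separating handle circle, the left-hand corank is $2$, while $\reeb(\Sigma_{1,2})=1$ already bounds the size of \emph{any} without-boundary independent system in $W$. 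What your argument (and the paper's) actually proves is that this corank is the maximum size of a regular independent extension whose new components are disjoint from $\partial P(\N)$, i.e.\ proper in $W$; a genuinely without-boundary extension would instead be governed by $\corank\bigl(\pi_1(W|\N)/\geng{\pi_1(\partial(W|\N))}^{\pi_1(W|\N)}\bigr)+r=\reeb(W|\N)+r$. For closed $M$ --- the only case in which the max-extension claim is subsequently used --- $\partial W=\varnothing$, the two readings coincide, and your proof is complete.
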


\begin{proof}
	Suppose we have a $2$-sided connected submanifold $N=N_1$ without boundary with product neighbourhood $P(N)$ in a compact manifold $W$ such that $W|N$ is connected. Thus $W$ is obtained from $W|N$ be gluing the components of boundary $\partial(W|N) = P_{-1}(N) \sqcup P_{1}(N) \sqcup \partial W$ using a diffeomorphism $h\colon P_{-1}(N) \to P_{1}(N)$. It is known (see \cite[Chapter~IV]{Lyndon-Schupp}) that $\pi_1(W)$ is the HNN extension of $\pi_1(W|N)$ relative to $h_\# \colon H_{-1} \to H_{1}$, where $H_{t} = \pi_1(P_{t}(N)) < \pi_1(W|N)$. In other words, $\pi_1(W)$ is the free product $\pi_1(W|N) * \Z$ divided by the normal closure of $\{ u \omega u^{-1} h_\#(\omega)^{-1}\, \colon\, \omega \in H_{-1}\}$, where $u$ is the stable letter which generates $\Z$. The group $\pi_1(W|N)$ is a subgroup of $\pi_1(W)$ and the groups $H_{-1}$ and $H_1$ are conjugate in $\pi_1(W)$. In fact, the normal subgroup $\pi_1(N)^{\pi_1(W)}$ in $\pi_1(W)$ is equal to $\geng{H_{-1}}^{\pi_1(W)} = \geng{H_{1}}^{\pi_1(W)} = \geng{H_{-1},H_1}^{\pi_1(W|N)}$. Therefore $\pi_1(W)/\pi_1(N)^{\pi_1(W)}$ is isomorphic to
	$$
	\pi_1(W)/\geng{H_{-1},H_1}^{\pi_1(W|N)} \cong \pi_1(W|N)/\geng{H_{-1},H_1}^{\pi_1(W|N)} * \Z.
	$$
	It gives the first part of the proposition for $r=1$ since $\corank(G*H)=\corank(G)+\corank(H)$ (see \cite{Cornea}). The general case follows by considering all submanifolds $N_i$ simultaneously and HNN extension with $r$ stable letters.

	The description of the number on both sides of this equality follows by Proposition~\ref{proposition:cones-systems_omitting_part_of_boundary}.
\end{proof}

\begin{example}\label{example:product_surface_circle_not_ME}
	The Reeb number of a compact manifold $W$ with non-empty boundary can be smaller than $C(W)$. For example, let $M= \Sigma \times \es^1$,
	where $\Sigma$ is a closed surface of the Euler characteristic $\chi(\Sigma) = 2 - k \leq 0$. Then 
	$$
	\reeb(M) = \corank(\pi_1(\Sigma)\times \Z) =  \max\left(\left\lfloor \frac{k}{2} \right\rfloor,1\right) = \left\lfloor \frac{k}{2} \right\rfloor \geq 1
	$$
	(see \cite{Gelbukh:corank, Michalak-TMNA}).
	Let  $\N=(\Sigma \times \{1\})$ and $W:= M |\N \cong \Sigma \times [0,1]$. Then $C(W) = \corank(\pi_1(\Sigma)) =  \left\lfloor \frac{k}{2} \right\rfloor$. However,
	$\reeb(W) = \corank\left((\pi_1(\Sigma) \times \Z) / \pi_1(\Sigma)\right) -1 = 0$ by
	Proposition \ref{proposition:size_of_maximum_extension_of_system}. Therefore $\N$ cannot be extended to a regular and independent system of hypersurfaces in $M$ of a larger size.
\end{example}

\begin{example}
	Let $\Sigma_{g,h}$ and $S_{g,h}$ denote, respectively, an orientable and non-orientable surface of genus $g$ with $h\geq 1$ open discs removed. Then 
	\begin{itemize}
		\item $\reeb(\Sigma_{g,h}) = g$ and $C(\Sigma_{g,h})=2g+h-1$,
		\item  $\reeb(S_{g,h}) = \left\lfloor \frac{g}{2} \right\rfloor$ and $C(S_{g,h})=g+h-1$.
	\end{itemize}
	Indeed, by Proposition \ref{proposition:cones-systems_omitting_part_of_boundary} we have $\reeb(W) = \corank(\pi_1(\cone_{\partial W}(W)))$. It gives the calculation of Reeb numbers since
	$\cone_{\partial \Sigma_{g,h}}(\Sigma_{g,h}) \cong \Sigma_g$ and $\cone_{\partial S_{g,h}}(S_{g,h}) \cong S_g$. The calculation of $C(W)$ follows by Theorem \ref{theorem:Jaco for smooth} and from the
	fact that $\pi_1(\Sigma_{g,h})=F_{2g+h-1}$ and $\pi_1(S_{g,h}) = F_{g+h-1}$ .
\end{example}

\begin{corollary}\label{corollary:extendability_of_systems_in_surfaces}
	Any independent, regular and without boundary system $\mathcal{N}$ of hypersurfaces in a compact surface $\Sigma$ can be extended to such a
	system of size $\reeb(\Sigma)$.
\end{corollary}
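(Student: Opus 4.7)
The plan is to construct the extension by taking a maximal independent regular system inside the cut-open surface $\Sigma|\N$ and concatenating it with $\N$. First I would apply Proposition \ref{proposition:reeb_number_for_triads_and_systems_without_boundary} to the compact surface $\Sigma|\N$ to produce an independent, regular, without-boundary system $\mathcal{M}=(M_1,\ldots,M_s)$ in $\Sigma|\N$ of maximal size $s=\reeb(\Sigma|\N)$. The submanifolds $M_j$ lie in the interior of $\Sigma|\N$, hence are automatically disjoint from $\N$ and from $\partial\Sigma$, and with framings chosen compatibly this yields a system $\N':=(N_1,\ldots,N_r,M_1,\ldots,M_s)$ without boundary in $\Sigma$.

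The next step is to verify that $\N'$ genuinely extends $\N$. Regularity is immediate since each $N_i$ and each $M_j$ is connected, while independence follows from the identity $\Sigma|\N'=(\Sigma|\N)|\mathcal{M}$, whose right-hand side is connected because $\mathcal{M}$ is independent in $\Sigma|\N$. Hence $\N'$ is an independent regular system without boundary in $\Sigma$, of size $r+\reeb(\Sigma|\N)$, and its framings agree with those of $\N$ by construction.

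It remains to show $r+\reeb(\Sigma|\N)=\reeb(\Sigma)$, which I expect to be the main computational step. Since every product neighbourhood $P(N_i)\cong N_i\times[-1,1]$ has vanishing Euler characteristic, I would first observe $\chi(\Sigma|\N)=\chi(\Sigma)$ while $\partial(\Sigma|\N)$ contains exactly $|\pi_0(\partial\Sigma)|+2r$ circles. Combined with the formulas $\reeb(\Sigma_{g,b})=g$ and $\reeb(S_{g,b})=\lfloor g/2\rfloor$ from the Example above and the classification of compact surfaces, a short case analysis finishes the proof: if $\Sigma=\Sigma_{g,b_0}$ is orientable then $\Sigma|\N$ is orientable of genus $g-r$ and the sum is $r+(g-r)=g=\reeb(\Sigma)$; if $\Sigma=S_{k,b_0}$ is non-orientable, then $\Sigma|\N$ is either orientable of genus $(k-2r)/2$ (forcing $k$ even) or non-orientable of genus $k-2r$, and in either sub-case the sum collapses to $\lfloor k/2\rfloor=\reeb(\Sigma)$.

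The only mildly subtle point is tracking which of the two sub-cases occurs when $\Sigma$ is non-orientable, but since both give the same numerical answer no real obstacle arises. The substantive ingredient of the argument is Proposition \ref{proposition:reeb_number_for_triads_and_systems_without_boundary}, which reduces the construction of $\mathcal{M}$ to existence facts already established; once $\mathcal{M}$ is in hand, everything reduces to Euler characteristic bookkeeping.
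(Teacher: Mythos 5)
Your proof is correct and follows essentially the same route as the paper: identify $\Sigma|\N$ via the classification of compact surfaces, deduce $\reeb(\Sigma|\N)=\reeb(\Sigma)-r$ from the formulas $\reeb(\Sigma_{g,h})=g$ and $\reeb(S_{g,h})=\lfloor g/2\rfloor$, and extend $\N$ by a maximal system in the complement. You are in fact somewhat more explicit than the paper, which leaves the adjunction of $\mathcal{M}$ and the orientable-complement subcase for non-orientable $\Sigma$ implicit; both of your additions are sound.
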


\begin{proof}
	Let $r$ be the size of $\mathcal{N}$. Since $\Sigma$ is two-dimensional, $\N$ consists of circles in $\Sigma$. It is easily seen by the classification theorem of compact surfaces that if $\Sigma=\Sigma_{g,h}$ then  $\Sigma|\N \cong \Sigma_{g-r,h+2r}$, and if $\Sigma=S_{g,h}$ then $\Sigma|N \cong S_{g-2r,h+2r}$ or $\Sigma|N \cong \Sigma_{g/2-r,h+2r}$ (the latter case can only occur if $g$ is even). By the above example in all the cases $\reeb(\Sigma|\N) = \reeb(\Sigma) - r$, so $\mathcal{N}$ can be extended to the size $\reeb(\Sigma)$.
\end{proof}


\subsection{Topological conjugacy of Morse functions}\label{section:conjugacy_of_Morse_functions}

Now, we focus on relations between Reeb epimorphisms and Morse functions. The main trouble is that in general different Reeb epimorphisms have different codomains. Although fundamental groups of Reeb graphs with the same cycle ranks are isomorphic, they are not isomorphic in the canonical way. However, this ambiguity can be omitted for oriented graphs for which the identity map is the only orientation-preserving automorphisms (see Remark \ref{remark:rigorous_representation-id_only_automorphism}).

Let us restrict our attention to the case of a closed manifold $M$. The functions $f_1$ and $f_2$ on $M$ are called \emph{topologically conjugate} if there are a self-homeomorphism $h\colon M \to M$ and an orientation-preserving homeomorphism $\eta\colon \R \to \R$ such that $f_1 = \eta \circ f_2 \circ h$. In this case $h$ induces the unique homeomorphism $\overline{h}\colon \reeb(f_1) \to \reeb(f_2)$ such
that $\overline{h}\circ q_1 = q_2 \circ h$ and $\overline{f_1} = \eta \circ \overline{f_2}\circ \overline{h}$.

\begin{lemma}\label{lemma:equivalence by conjugacy}
	If $f_1$ and $f_2$ are simple Morse functions topologically conjugate by $h$, then their Reeb graphs are isomorphic through $\overline{h}$.
\end{lemma}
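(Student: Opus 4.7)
The topological conjugacy $f_1 = \eta \circ f_2 \circ h$ already yields a homeomorphism $\overline{h}\colon \reeb(f_1) \to \reeb(f_2)$ satisfying $\overline{h}\circ q_1 = q_2 \circ h$ and $\overline{f_1} = \eta \circ \overline{f_2} \circ \overline{h}$, so the task is to upgrade this homeomorphism to a CW-isomorphism of oriented graphs. For a simple Morse function $f$ on a closed manifold $M$, the vertices of $\reeb(f)$ are precisely the images under $q$ of the connected components of critical level sets that contain a critical point, and by simplicity they are in bijection with the critical points of~$f$. Hence the plan reduces to showing that $h$ maps critical points of $f_1$ bijectively onto critical points of $f_2$.

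The main step, and the principal obstacle since $h$ is only a homeomorphism rather than a diffeomorphism, is a purely topological characterization of Morse-critical points: a point $p$ is a critical point of a Morse function $f\colon M \to \R$ if and only if $f$ is not locally a trivial fibration over an interval at $p$; that is, there is no open neighborhood $U \ni p$, open interval $I \ni f(p)$, open set $V \subset \R^{n-1}$, and homeomorphism $\psi\colon U \to I \times V$ with $f|_U = \pr_I \circ \psi$. The implicit function theorem supplies such a trivialization at every regular point, while the Morse lemma---writing $f$ near a critical point of index $k$ as $-x_1^2 - \ldots - x_k^2 + x_{k+1}^2 + \ldots + x_n^2$---makes the local fiber either a single point (for $k=0$ or $k=n$) or a genuine cone singularity (for $0<k<n$), incompatible with any such product trivialization. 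Given this characterization, if $p$ were critical for $f_1$ but $\psi\colon U \to I \times V$ were a trivialization of $f_2$ near $h(p)$, then $\psi \circ h$ (with $I$ reparameterized by $\eta^{-1}$) would give a trivialization of $f_1$ at $p$---a contradiction; the symmetric argument with $h^{-1}$ and $\eta^{-1}$ completes the bijection.

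Once $h$ is known to biject the critical points, the connected component $C$ of $f_1^{-1}(c)$ containing a critical point $p$ is sent by $h$ to the component of $f_2^{-1}(\eta(c))$ containing $h(p)$, so $\overline{h}$ bijects the vertex sets of $\reeb(f_1)$ and $\reeb(f_2)$. A homeomorphism of one-dimensional CW complexes that restricts to a bijection of $0$-skeleta is automatically a CW-isomorphism, and the good orientations transfer correctly because $\overline{f_1} = \eta \circ \overline{f_2} \circ \overline{h}$ with $\eta$ an orientation-preserving homeomorphism of $\R$, so strict monotonic increase of $\overline{f_1}$ along an edge of $\reeb(f_1)$ corresponds to strict monotonic increase of $\overline{f_2}$ along the $\overline{h}$-image edge. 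This yields the desired isomorphism of oriented graphs.
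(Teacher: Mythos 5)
Your proposal is correct, but it takes a genuinely different route from the paper. The paper's proof observes that since local degree is a topological invariant of a graph at each of its points, the homeomorphism $\overline{h}$ automatically matches up all vertices of degree $\neq 2$, so the only possible failure is a vertex of degree $2$ in $\reeb(f_1)$ landing in the interior of an edge of $\reeb(f_2)$; this is excluded by comparing Euler characteristics of the corresponding triads in $M$ (a product triad has $\chi(W)=\chi(W_-)$, while a triad with exactly one non-degenerate critical point of index $k$ has $\chi(W)=\chi(W_-)+(-1)^k$). You instead prove the stronger statement that $h$ itself carries critical points of $f_1$ bijectively onto critical points of $f_2$, via the topological characterization of Morse critical points as the points where $f$ fails to be a locally trivial fibration over an interval. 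Your argument is more robust: it makes no use of simplicity in the key step and would show that topologically conjugate Morse functions (simple or not) have Reeb graphs isomorphic through $\overline{h}$, whereas the paper's Euler-characteristic count genuinely needs one critical point per degree-$2$ vertex (several indices could otherwise cancel). The price is that you must justify the one assertion you leave implicit, namely that for $0<k<n$ the local fibre through the critical point --- a cone on $S^{k-1}\times S^{n-k-1}$ --- cannot be homeomorphic to an open subset of $\R^{n-1}$; this is standard but does require a local homology computation at the cone point (the reduced homology of $S^{k-1}\times S^{n-k-1}$ is not that of $S^{n-2}$), not merely the phrase ``genuine cone singularity.'' With that one step filled in, the proof is complete; the paper's version is shorter and stays within the triad-based Morse theory used elsewhere in the text.
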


\begin{proof}
	If could be a vertex with degree $2$  in $\reeb(f_1)$ mapped by $\overline{h}$ to a point on an edge in $\reeb(f_2)$, then some smooth product triad would be mapped by $h^{-1}$ homeomorphically onto a smooth triad with exactly one non-degenerate critical point. It is a contradiction by the comparison of Euler characteristics.
\end{proof}

\begin{figure}[h]
	$$
	\begin{xy}
		(-6,18)*+{M}="M";
		(-6,0)*+{M}="M2";
		(15,18)*+{\reeb(f_1)}="R(f1)";
		(15,0)*+{\reeb(f_2)}="R(f2)";
		(36,0)*+{\R}="R2";
		(36,18)*+{\R}="R1";
		{\ar@{->}^{q_1} "M"; "R(f1)"};%
		{\ar@{->}_{h} "M"; "M2"};%
		{\ar@{->}^{q_2} "M2"; "R(f2)"};
		{\ar@{-->}_{\overline{h}} "R(f1)"; "R(f2)"};
		{\ar@{->}^{\overline{f_1}} "R(f1)"; "R1"};
		{\ar@{->}^{\overline{f_2}} "R(f2)"; "R2"};
		{\ar@{->}_{\eta} "R1"; "R2"};
	\end{xy}
	$$
	
	\caption{Topologically conjugate simple Morse functions have isomorphic Reeb graphs.}\label{figure:topological_equivalence_of_Morse_functions}
\end{figure}

We cannot say directly about equivalence and strong equivalence of Reeb epimorphisms since they have distinct codomains, even if Reeb graphs are isomorphic. By the above diagram we see that if $f_1$ and $ f_2$ are topologically conjugate by $h$, then $\overline{h}_\# \circ (q_1)_\#$ and $(q_2)_\#$ are strongly equivalent. Thus we say that two Reeb epimorphisms $\varphi_i\colon \pi_1(M) \to \pi_1(\reeb(f_i))$ are \emph{strongly equivalent} if they are strongly equivalent with respect to an isomorphism $k\colon \reeb(f_1)\to \reeb(f_2)$, i.e. $k_\# \circ \varphi_1$ and $\varphi_2$ are strongly equivalent. In general, it may depend on the chosen isomorphism.

The following theorem is a classical result in the theory of Morse functions space.

\begin{theorem}[{Kulinich \cite{Kulinich}, Sharko \cite{Sharko-functions}, cf. \cite{Fabio-Landi}}]\label{theorem:conjugacy=isomorphic_Reeb_graphs_surfaces}
	Two simple Morse functions on a closed orientable surface $\Sigma$ are topologically conjugate by $h\colon \Sigma \to \Sigma$ if and only if their Reeb graphs are isomorphic as oriented graphs through~$\overline{h}$.
\end{theorem}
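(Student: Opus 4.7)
The forward direction is essentially Lemma~\ref{lemma:equivalence by conjugacy}: a topological conjugacy $h$ induces a graph isomorphism $\overline{h}\colon\reeb(f_1)\to\reeb(f_2)$, and since the relation $\overline{f_1}=\eta\circ\overline{f_2}\circ\overline{h}$ holds with $\eta$ an orientation-preserving homeomorphism of $\R$, the map $\overline{h}$ sends each edge of $\reeb(f_1)$ on which $\overline{f_1}$ is increasing to an edge of $\reeb(f_2)$ on which $\overline{f_2}$ is increasing in the same direction; hence $\overline{h}$ preserves the good orientation.

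For the converse, suppose $k\colon\reeb(f_1)\to\reeb(f_2)$ is an orientation-preserving isomorphism. The plan is to construct $h$ and $\eta$ piece by piece. First I would choose $\eta\colon\R\to\R$: the critical values of $f_1$ correspond to the vertices of $\reeb(f_1)$ and are mapped by $k$ to the vertices of $\reeb(f_2)$; since $k$ preserves the good orientation, these critical values occur in the same order, so I can define $\eta$ on the critical set by $\eta(\overline{f_2}(k(v)))=\overline{f_1}(v)$ and extend linearly between consecutive critical values to get a monotonic homeomorphism. Next, decompose $\Sigma$ according to $\reeb(f_1)$: for each open edge $e$ the preimage $q_1^{-1}(e)$ is an open annulus $S^1\times(a,b)$ on which $f_1$ is projection to the second factor; around each degree-$1$ vertex (an extremum) a small closed neighborhood pulls back to a disk; around each degree-$3$ vertex (a saddle, as $\Sigma$ is orientable and $f_1$ simple) a small closed neighborhood pulls back to a pair of pants carrying the standard saddle model. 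The analogous decomposition exists for $\reeb(f_2)$ and $f_2$, and $k$ matches up the pieces.

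Now I would build $h$ by inducting on the skeleton. Fix an orientation of $\Sigma$. At each extremum-vertex, the two disk neighborhoods are canonically homeomorphic (via any orientation-preserving homeomorphism carrying the extremum to the extremum and boundary to boundary at matched $f$-value). At each saddle-vertex, both neighborhoods are oriented pairs of pants with oriented boundary circles indexed by the three incident edges and levelled by $f_1$, $f_2$ respectively; any two such are homeomorphic by an orientation-preserving map compatible with the level structure and the matched boundary circles. Once $h$ is defined on all vertex neighborhoods, each remaining piece is a closed annulus $S^1\times[a,b]$ between two already-defined boundary circles, and $h$ is prescribed there to satisfy $f_1=\eta\circ f_2\circ h$, i.e.\ $h(\theta,t)=(\phi_t(\theta),\eta^{-1}(t))$ for some path $\phi_t\in\operatorname{Homeo}(S^1)$ from the given homeomorphism at $t=a$ to the one at $t=b$. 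The main obstacle is precisely the existence of this path: it requires that the two boundary homeomorphisms lie in the same component of $\operatorname{Homeo}(S^1)$, i.e.\ have the same orientation behaviour on $S^1$. This is where orientability of $\Sigma$ is essential. Since $\Sigma$ is oriented and all level circles inherit a boundary orientation from the sublevel sets, the vertex homeomorphisms can be taken orientation-preserving on every boundary circle simultaneously; then both ends of each cylinder are oriented consistently, so the required path exists (the orientation-preserving subgroup of $\operatorname{Homeo}(S^1)$ is path-connected). Thus $h$ extends across every annulus, yielding a global homeomorphism of $\Sigma$ with $f_1=\eta\circ f_2\circ h$ and induced Reeb-graph map equal to $k$.
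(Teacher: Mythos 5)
First, a point of comparison: the paper does not prove this theorem at all --- it is quoted from Kulinich and Sharko --- and only the ``only if'' direction appears in the text, as Lemma~\ref{lemma:equivalence by conjugacy}. Your forward direction coincides with that lemma. Your converse is a reconstruction of the classical cut-and-paste argument, and its architecture is sound: level-preserving disks at extrema, oriented pairs of pants at saddles, gluing across the edge annuli by a path in $\operatorname{Homeo}(\es^1)$, with orientability of $\Sigma$ guaranteeing that the two circle homeomorphisms at the ends of each annulus lie in the same (path-connected) component $\operatorname{Homeo}^+(\es^1)$. You correctly identify this last point as the place where orientability enters; it is exactly what fails for non-orientable surfaces and forces the signs of Lychak--Prishlyak in Theorem~\ref{thm:Prishlyak_conj}.

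There is, however, a genuine gap at the very first step of your converse: you claim that because $k$ preserves the good orientation, the critical values of $f_1$ and $f_2$ ``occur in the same order,'' and you use this to define $\eta$. A good orientation only determines the \emph{partial} order on vertices generated by directed edges; two vertices not joined by a directed path may have their critical values in either relative order, and an isomorphism of oriented graphs need not respect the total order. This is not a cosmetic issue. On $\es^2$ take $f_1$ with one minimum, two saddles $s_1,s_2$ with $f_1(s_1)<f_1(s_2)$, and three maxima with $f_1(M_A)<f_1(M_B)<f_1(M_C)$, where $M_A$ caps the branch leaving $s_1$ and $M_B,M_C$ cap the branches leaving $s_2$; let $f_2$ realize the same oriented Reeb graph but with $f_2(M_B)<f_2(M_A)$. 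The identity is an orientation-preserving graph isomorphism, yet $f_1$ and $f_2$ are not topologically conjugate: since $\eta$ is increasing, any conjugacy must match critical points in the order of their values, hence send $M_A$ to $M_B$, and then it sends a disk component of a superlevel set of $f_1$ onto a component of a superlevel set of $f_2$ that contains the saddle $s_2$ --- impossible. So the statement is only correct under the reading that the isomorphism is compatible with the functions $\overline{f_1}$ and $\overline{f_2}$ up to an orientation-preserving homeomorphism of $\R$, which is what the phrase ``through $\overline{h}$'' encodes via $\overline{f_1}=\eta\circ\overline{f_2}\circ\overline{h}$. You should either take that compatibility as part of the hypothesis (after which your definition of $\eta$ is immediate and the rest of your construction goes through), or supply an argument reducing a general oriented-graph isomorphism to a compatible one; the justification you give is incorrect as written, and no such reduction exists in general.
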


This theorem allows us to give another proof of a part of Theorem \ref{thm:grigorchuk} for orientable surfaces which uses Reeb graphs. First, for any two epimorphisms $\pi_1(\Sigma_g) \to F_r$ we need to take systems which induce them. Then we extend them to systems of maximum size $\reeb(\Sigma_g) = g$ by Corollary \ref{corollary:extendability_of_systems_in_surfaces} and now we can represent induced epimorphisms $\pi_1(\Sigma_g) \to F_g$ by Reeb epimorphisms of simple Morse function whose Reeb graphs are the initial graphs (they have no vertices of degree $2$). Thus by Theorem \ref{theorem:conjugacy=isomorphic_Reeb_graphs_surfaces} the isomorphism of the Reeb graphs  is induced by a~self-homeomorphism of $\Sigma_g$ that maps one system to the another and gives a strong equivalence between considered epimorphisms.

\begin{remark}
	Theorem \ref{thm:grigorchuk} for non-orientable surfaces of even genus shows that the analogue of Theorem \ref{theorem:conjugacy=isomorphic_Reeb_graphs_surfaces} does not hold for them in general. In fact, we may construct two simple Morse functions on $S_{2g}$ whose Reeb graphs are~isomorphic, but their Reeb epimorphisms are not strongly equivalent.
	Thus we must endow Reeb graphs in additional information.
\end{remark}

Lychak--Prishlyak in their work \cite{Prishlyak_nonoriented} equipped Reeb graphs of simple Morse functions on non-orientable surfaces with signs $+$ or $-$ near vertices of degree $3$, which come from the compatibility of orientations during attaching handles in corresponding critical levels. To be precise, each sign is assigned to a pair of incident edges at a vertex $v$ of degree $3$, one of which is incoming to $v$ and the second one is outgoing from $v$.  For the procedure of the assignment of signs we refer the reader to \cite{Prishlyak_nonoriented}. Two \emph{Reeb graphs with signs} are called \emph{equivalent} if they are isomorphic and it is possible to obtain identical signs by the following operation: for a given edge reverse all signs assigned to it.

\begin{theorem}[Lychak--Prishlyak \cite{Prishlyak_nonoriented}]\label{thm:Prishlyak_conj}
	Two simple Morse functions on a closed nonorientable surface are topologically conjugate if and only if their Reeb graphs with signs are equivalent.
\end{theorem}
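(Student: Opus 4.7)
The plan is to address each implication separately, leveraging Lemma~\ref{lemma:equivalence by conjugacy} and the local topological classification of saddles on a~non-orientable surface.

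For the forward direction, I would assume $f_1=\eta\circ f_2\circ h$ and invoke Lemma~\ref{lemma:equivalence by conjugacy} to obtain an oriented graph isomorphism $\overline{h}\colon\reeb(f_1)\to\reeb(f_2)$. To match the signs, I would use that the sign at a~degree-$3$ vertex records, for each pair of adjacent edges of opposite direction, whether the corresponding saddle handle attachment is compatible with a~prescribed local transverse orientation of the nearby regular level component. Since $h$ is a~homeomorphism of saddle neighbourhoods, it preserves this local topological datum once a~compatible choice of transverse orientations is fixed. The only remaining freedom is the choice of transverse orientation of each regular circle corresponding to an edge~$e$; switching it flips every sign attached to~$e$, which is precisely the allowed equivalence move on signed Reeb graphs.

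For the converse, I would decompose $\Sigma$ via the Reeb graph: for each edge~$e$ of $\reeb(f_1)$ pick a~regular value and extract the corresponding connected component of the level set, and for each vertex~$v$ excise a~small closed neighbourhood of the critical level. This produces \emph{edge pieces} $E_e\cong S^1\times[0,1]$ and \emph{vertex pieces} $V_v$ realising standard local models (a~disc for an extremum, and the standard cobordism between an annulus and either an annulus or a~M\"obius band for a~saddle). Make the analogous decomposition for $f_2$. The oriented isomorphism $\overline{h}$ identifies pieces pairwise, and I would build $h$ piece by piece: on each edge piece choose the essentially unique level-preserving homeomorphism, and on each vertex piece invoke the fact that two standard models of a~given local type are homeomorphic with prescribed boundary parametrisation. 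Composing with a~monotone $\eta\colon\R\to\R$ matching the critical values completes the conjugacy.

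The main obstacle will be the compatibility of the per-piece homeomorphisms across the shared regular level circles. The level-preserving homeomorphism chosen on an edge piece $E_e$ may swap its two boundary circles or reverse the transverse orientation, so the gluing to adjacent vertex pieces can fail. The signs control exactly this datum: they record which transverse orientation of the incident edge circle is matched to which boundary component of the vertex piece's local model. When the two signed graphs differ along an edge~$e$ only by flipping all signs on~$e$, pre-composing $h|_{E_e}$ with the homeomorphism reversing the $[0,1]$-factor restores compatibility. The delicate part is to show that, on a~non-orientable $\Sigma$, any global incompatibility between the chosen per-piece maps can be absorbed by such edge-wise flips; otherwise one would encounter an orientation-reversing monodromy obstructing the global definition of~$h$. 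I expect this orientation-bookkeeping around cycles of $\reeb(f_1)$ that pass through orientation-reversing loops of $\Sigma$ to be the crux of the argument.
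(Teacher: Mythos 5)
First, a point of comparison: the paper does not prove this statement at all. It is imported verbatim as a theorem of Lychak and Prishlyak \cite{Prishlyak_nonoriented}, with the reader referred to that paper even for the definition of the sign assignment. So there is no in-paper argument to measure your proposal against; it can only be judged on its own completeness.

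On that score, your outline is the natural one (forward direction via Lemma~\ref{lemma:equivalence by conjugacy} plus topological invariance of the sign data modulo the choice of transverse orientations; converse via a decomposition into edge annuli and vertex pieces reassembled according to the signed graph), but it stops exactly at the point where the theorem has content. You explicitly defer the claim that every global incompatibility of the per-piece homeomorphisms can be absorbed by the allowed per-edge sign flips --- that is, that no residual monodromy obstruction survives around cycles of the Reeb graph, in particular around cycles meeting orientation-reversing loops of $\Sigma$. This is precisely what separates ``the signed graphs are equivalent'' from weaker statements, and it is the part of the Lychak--Prishlyak argument that requires their explicit normal-form analysis of handle attachments; announcing it as ``the crux'' without carrying it out leaves the converse unproved. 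Two secondary gaps: (i) your local models are off --- a degree-$3$ saddle piece is a pair of pants and a degree-$2$ saddle piece is a M\"obius band with a disc removed, and since signs are attached only at degree-$3$ vertices, the degree-$2$ vertices (which are unavoidable, e.g.\ for odd genus, cf.\ the remark following Lemma~\ref{lemma:simple_iff_maximal_cycle_rank}) need an argument you do not supply; (ii) in the forward direction, the assertion that $h$ ``preserves this local topological datum'' presupposes that the sign of \cite{Prishlyak_nonoriented} is a topological invariant of the foliated saddle neighbourhood rather than of the chosen handle decomposition, which also needs justification rather than assertion.
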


\begin{lemma}\label{lemma:configurations_of_signs}
	Let $\Gamma$ be a graph with good orientation whose vertices have degrees $1$ or $3$. Then there are exactly $2^r$ equivalence classes of graphs with signs, where $r=\beta_1(\Gamma)$.
\end{lemma}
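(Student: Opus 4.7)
The plan is to interpret signs and flip operations as a linear problem over $\F_2$ and count orbits by rank--nullity. Let $v_1, v_3$ denote the numbers of degree-$1$ and degree-$3$ vertices. The handshake identity gives $2|E| = v_1 + 3v_3$ and $r = |E| - |V| + 1 = (v_3 - v_1)/2 + 1$. Because the orientation is good, every degree-$3$ vertex is either $1$-in-$2$-out or $2$-in-$1$-out, so it carries exactly two (incoming, outgoing) pairs; the total number of sign positions is therefore $s = 2v_3$, and a sign configuration is an element of $\F_2^s$. I would then package the single-edge flip operations into a linear map $\rho \colon \F_2^E \to \F_2^s$ whose $(e,e')$-coordinate on the indicator of $S \subseteq E$ is $|S \cap \{e,e'\}| \bmod 2$. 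Two sign configurations are equivalent iff they differ by an element of $\im \rho$, so the number of equivalence classes equals $2^{s - \rank \rho}$, and the lemma reduces to showing $\dim \ker \rho = 1$.

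Next I would compute the kernel. The all-ones vector $\mathbf 1 \in \F_2^E$ always lies in $\ker \rho$, because each sign-pair consists of exactly two edges, so flipping every edge toggles each sign twice. For the converse, if $S \in \ker \rho$, then at a $1$-in-$2$-out vertex with incoming edge $a$ and outgoing edges $b, c$ both sign-constraints force $|S \cap \{a,b\}|$ and $|S \cap \{a,c\}|$ to be even, hence $a, b, c$ are simultaneously in or out of $S$; the $2$-in-$1$-out case is symmetric. Excluding the trivial case $v_3 = 0$ (where $\Gamma$ is a single edge, $s = 0$, $r = 0$, and the lemma is immediate), every edge of $\Gamma$ is incident to some degree-$3$ vertex (otherwise its two degree-$1$ endpoints together with the edge would form a connected component of $\Gamma$, contradicting $v_3 \geq 1$), and the line graph of $\Gamma$ is connected, so the local constraint propagates across $\Gamma$ to give $S \in \{\emptyset, E\}$. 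Thus $\rank \rho = |E| - 1$, and substituting $s = 2v_3$ and $|E| = (v_1 + 3v_3)/2$ into $2^{s - |E| + 1}$ yields $2^{(v_3 - v_1)/2 + 1} = 2^r$.

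The step I expect to be the main obstacle is precisely the kernel computation, namely verifying that the local rule ``all three edges at each degree-$3$ vertex lie in $S$ together or not at all'' actually propagates along $\Gamma$ to a global rule, since degree-$1$ vertices are ``sinks'' for the propagation and one must argue via the line-graph (or equivalently the connected sequence of shared degree-$3$ vertices) rather than directly via vertex connectivity. Once the kernel is shown to be one-dimensional, the rest of the argument is a bookkeeping computation from the handshake identity.
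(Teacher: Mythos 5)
Your argument is correct, and it takes a genuinely different route from the paper's. The paper reduces $\Gamma$ to the canonical graph of Figure \ref{figure:graphs_with_signs}(a) by combinatorial modifications of Reeb graphs, exhibits there a normal form with $r$ free signs, and leaves both the normal-form claim and the invariance of the count under the modifications as exercises for the reader. You instead linearize over $\mathbb{F}_2$: since a good orientation makes every degree-$3$ vertex either $1$-in-$2$-out or $2$-in-$1$-out (and excludes loop edges), each such vertex carries exactly two sign positions, the edge flips span the image of a linear map $\rho\colon\mathbb{F}_2^{E}\to\mathbb{F}_2^{s}$ with $s$ twice the number of degree-$3$ vertices, and the number of classes is $2^{s-\rank\rho}$; everything then rests on $\ker\rho=\{0,\mathbf{1}\}$. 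That kernel computation is sound; the one step I would make fully explicit is that two \emph{distinct} edges adjacent in the line graph meet in a vertex of degree at least $2$, hence of degree exactly $3$, so line-graph connectivity really does propagate the local ``all three edges move together'' rule past the degree-$1$ sinks. Your approach buys a self-contained proof that treats all admissible graphs (including multiple edges and the degenerate single-edge case) uniformly and exhibits the set of classes as an affine $\mathbb{F}_2$-space of dimension $r$; the paper's approach, at the price of the two unproved exercises, produces the explicit normal forms of Figure \ref{figure:graphs_with_signs}, which is what is actually invoked afterwards, in the proof that topological conjugacy of simple Morse functions on $S_{2g}$ is governed by strong equivalence of Reeb epimorphisms, to single out the all-plus configuration as the one class not realizable on a non-orientable surface. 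If you substitute your argument for the paper's, you would therefore still want a sentence identifying that distinguished class.
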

\begin{proof}
	First, look at the case of the canonical graph, i.e. a graph presented in Figure \ref{figure:graphs_with_signs} (a).
	It is an easy exercise that any such graph with signs is equivalent to a configuration of the form showed in the figure,	where in the $r$ places of "?" we can put arbitrary signs. Moreover, all such $2^r$ configurations are non-equivalent. The same can be shown for the initial graph with configurations of signs as in Figure \ref{figure:graphs_with_signs} (b).
	
	Now, note that by \cite{Fabio-Landi} and \cite{Michalak-DCG}
	there is a sequence of modifications of Reeb graphs which transform $\Gamma$ to the canonical graph. It is left to the reader to check that these modifications for graphs with vertices of degree~$1$~or~$3$ do not change the number of non-equivalent configurations of signs.
\end{proof}

\begin{figure}[h]
	\centering
	\begin{tikzpicture}[scale=0.8]

		
		\filldraw (-6,0) circle (1.7pt);
		\filldraw (-6,0.7) circle (1.7pt)node[left,xshift=-0.1em, yshift=0.15em] {\footnotesize $+$} node[right,xshift=0.1em, yshift=0.15em] {\footnotesize $+$};
		\filldraw (-6,1.7) circle (1.7pt)node[left,xshift=-0.1em, yshift=0.15em] {\footnotesize $+$} node[right,xshift=0.1em, yshift=0.15em] {\footnotesize $?$};
		\filldraw (-6,3) circle (1.7pt)node[left,xshift=-0.1em, yshift=0.15em] {\footnotesize $+$} node[right,xshift=0.1em, yshift=0.15em] {\footnotesize $+$};
		\filldraw (-6,4) circle (1.7pt)node[left,xshift=-0.1em, yshift=0.15em] {\footnotesize $+$} node[right,xshift=0.1em, yshift=0.15em] {\footnotesize $?$};
		\filldraw (-6,4.7) circle (1.7pt)node[left,xshift=-0.1em, yshift=0.15em] {\footnotesize $+$} node[right,xshift=0.1em, yshift=0.15em] {\footnotesize $+$};
		\filldraw (-6,5.7) circle (1.7pt)node[left,xshift=-0.1em, yshift=0.15em] {\footnotesize $+$} node[right,xshift=0.1em, yshift=0.15em] {\footnotesize $?$};
		\filldraw (-6,6.4) circle (1.7pt);
		
		\draw (-6,0) -- (-6,0.7);
		
		\draw (-6,1.7) to[out=200,in=160] (-6,0.7);
		\draw (-6,1.7) to[out=340,in=20] (-6,0.7);
		\draw (-6,2) -- (-6,1.7);
		\draw (-6,2.7) -- (-6,3);
		\draw (-6,4) to[out=200,in=160] (-6,3);
		\draw (-6,4) to[out=340,in=20] (-6,3);
		\draw (-6,4.7) -- (-6,4);
		\draw (-6,5.7) to[out=200,in=160] (-6,4.7);
		\draw (-6,5.7) to[out=340,in=20] (-6,4.7);
		\draw (-6,6.4) -- (-6,5.7);
		
		\draw (-6,2.2) node { .};
		\draw (-6,2.35) node { .};
		\draw (-6,2.5) node { .};

		\draw (-6,-1) node {(a)};

		
		\filldraw (0,0.25) circle (1.7pt);
		\filldraw (0,1) circle (1.7pt) node[left,xshift=-0.1em, yshift=0.2em] {\footnotesize $+$} node[right,xshift=0.1em, yshift=0.2em] {\footnotesize $+$};
		\filldraw (0.5,1.5) circle (1.7pt) node[left,xshift=-0.1em, yshift=0.2em] {\footnotesize $+$} node[right,xshift=0.1em, yshift=0.2em] {\footnotesize $+$};
		\filldraw (1,2) circle (1.7pt)  node[left,xshift=-0.1em, yshift=0.2em] {\footnotesize $+$} node[right,xshift=0.1em, yshift=0.2em] {\footnotesize $+$};
		\filldraw (1.5,2.5) circle (1.7pt)  node[left,xshift=-0.1em, yshift=0.2em] {\footnotesize $+$} node[right,xshift=0.1em, yshift=0.2em] {\footnotesize $+$};
		\filldraw (2,3) circle (1.7pt)  node[left,xshift=-0.1em, yshift=0.15em] {\footnotesize $+$} node[right,xshift=0.1em, yshift=0.15em] {\footnotesize $+$};
		\filldraw (2,4) circle (1.7pt) node[left,xshift=-0.1em,yshift=-0.15em] {\footnotesize $+$} node[right,xshift=0.2em, yshift=-0.15em] {\footnotesize $?$};
		\filldraw (1.5,4.5) circle (1.7pt) node[left,xshift=-0.1em,yshift=-0.2em] {\footnotesize $+$} node[right,xshift=0.2em, yshift=-0.2em] {\footnotesize $?$};
		\filldraw (1,5) circle (1.7pt) node[left,xshift=-0.1em,yshift=-0.2em] {\footnotesize $+$} node[right,xshift=0.2em, yshift=-0.2em] {\footnotesize $?$};
		\filldraw (0.5,5.5) circle (1.7pt) node[left,xshift=-0.1em,yshift=-0.2em] {\footnotesize $+$} node[right,xshift=0.2em, yshift=-0.2em] {\footnotesize $?$};
		\filldraw (0,6) circle (1.7pt) node[left,xshift=-0.1em,yshift=-0.2em] {\footnotesize $+$} node[right,xshift=0.2em, yshift=-0.2em] {\footnotesize $?$};
		\filldraw (0,6.75) circle (1.7pt);
		
		\draw (0,0.25) -- (0,1);
		\draw (0.5,1.5) -- (0,1);
		\draw (0.5,1.5) -- (1,2);
		\draw (1.1,2.1) -- (1,2);
		\draw (1.4,2.4) -- (1.5,2.5);
		\draw (1.5,2.5) -- (2,3);

		\draw (0,6.75) -- (0,6);
		\draw (0.5,5.5) -- (0,6);
		\draw (0.5,5.5) -- (1,5);
		\draw (1.1,4.9) -- (1,5);
		\draw (1.4,4.6) -- (1.5,4.5);
		\draw (1.5,4.5) -- (2,4);

		\draw (2,4) to[out=200,in=160] (2,3);
		\draw (2,4) to[out=340,in=20] (2,3);

		\draw (1.5,4.5) to[out=225,in=135] (1.5,2.5);
		\draw (1,5) to[out=225,in=135] (1,2);
		\draw (0.5,5.5) to[out=225,in=135] (0.5,1.5);
		\draw (0,6) to[out=225,in=135] (0,1);

		\draw (1.34,2.34) node { .};
		\draw (1.25,2.25) node { .};
		\draw (1.16,2.16) node { .};

		\draw (1.34,4.66) node { .};
		\draw (1.25,4.75) node { .};
		\draw (1.16,4.84) node { .};

		\draw (0,-1) node {(b)};

	\end{tikzpicture}
	
	\caption{The canonical and initial graphs with signs.}\label{figure:graphs_with_signs}
\end{figure}
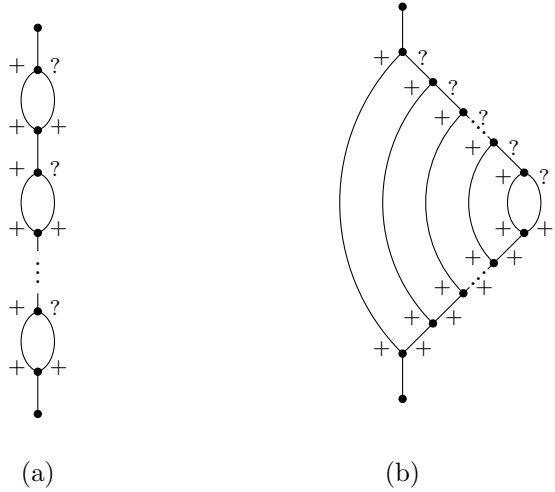

\begin{remark}
	For a graph with vertices of degree $2$ the number of non-equivalent configurations of signs may vary depending on the position of these vertices in the graph. Moreover, note that the Reeb graph of a simple Morse function on a non-orientable surface of odd genus has always a vertex of degree $2$.
	The same is true if a surface is non-orientable of even genus $2g$ and the Reeb graph has a cycle rank smaller than $g$. In the view of Lemma \ref{lemma:simple_iff_maximal_cycle_rank} it is reasonable to consider the case of simple Morse functions on a non-orientable surface of genus $2g$ whose Reeb graphs have cycle rank equal to $g$.
\end{remark}

\begin{theorem}\label{thm:conjugacy=strong equiv_Reeb_epi}
	Let $f_1, f_2\colon S_{2g} \to \R$ be simple Morse functions on a closed non-orientable surface of genus $2g$ such that $\beta_1(\reeb(f_1))=g=\beta_1(\reeb(f_2))$. Then they are topologically conjugate 
	if and only if their Reeb graphs are isomorphic 
	and their Reeb epimorphisms are strongly equivalent.
\end{theorem}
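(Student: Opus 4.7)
The forward direction is immediate from Lemma~\ref{lemma:equivalence by conjugacy} and the commutative diagram in Figure~\ref{figure:topological_equivalence_of_Morse_functions}: if $f_1 = \eta \circ f_2 \circ h$, then $\overline{h} \colon \reeb(f_1) \to \reeb(f_2)$ is an isomorphism of oriented graphs and the identity $\overline{h}_\# \circ (q_1)_\# = (q_2)_\# \circ h_\#$, with $h_\# \in \aut(\pi_1(S_{2g}))$, exhibits strong equivalence of the Reeb epimorphisms with respect to $\overline{h}$.

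For the converse, let $k \colon \reeb(f_1) \to \reeb(f_2)$ be an isomorphism of oriented graphs for which $k_\# \circ (q_1)_\# \simeq (q_2)_\#$. By Lemma~\ref{lemma:simple_iff_maximal_cycle_rank} the hypothesis $\beta_1(\reeb(f_i)) = g$ forces each $\reeb(f_i)$ to have vertices only of degree $1$ or $3$, so Lychak--Prishlyak's Theorem~\ref{thm:Prishlyak_conj} applies and it suffices to establish equivalence of the signed Reeb graphs. Pick a spanning tree $T_1 \subset \reeb(f_1)$ with ordered non-tree edges $e_1^1,\ldots,e_g^1$, and set $T_2 := k(T_1)$, $e_j^2 := k(e_j^1)$. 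By Theorem~\ref{theorem:Reeb_epi_iff_regular_and_independent} the components of the levels of $f_i$ corresponding to the edges $e_j^i$ form a regular independent system $\N_i = (N_1^i,\ldots,N_g^i)$ in $S_{2g}$ of size $g$ satisfying $\varphi_{\N_i} = (p_{T_i})_\# \circ (q_i)_\#$. Since $k$ carries $T_1$ to $T_2$ respecting the ordering of non-tree edges, $(p_{T_2})_\# \circ k_\# = (p_{T_1})_\#$, and combining this with $k_\# \circ (q_1)_\# \circ \nu = (q_2)_\#$ for some $\nu \in \aut(\pi_1(S_{2g}))$ yields $\varphi_{\N_1} \circ \nu = \varphi_{\N_2}$. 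Proposition~\ref{proposition:bijection_for_surfaces_epi_and_systems} now places $\N_1$ and $\N_2$ in a common class of $\H^{fr}_g(S_{2g})/\diffd(S_{2g})$, which by Theorem~\ref{theorem:calculations_of_cobordisms_for_surfaces} (applied with $r = g = m$) is encoded by a non-empty subset $I \subset \{1,\ldots,g\}$ recording those indices $j$ for which the tube $P(N_j^i)$ is attached to the orientable surface $S_{2g}|\N_i$ in an orientation-reversing way.

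The remaining task is to translate the common subset $I$ into equivalence of the signed Reeb graphs, and this is the main obstacle. Concretely, one must verify that the Lychak--Prishlyak sign at the degree-$3$ vertex of $\reeb(f_i)$ sitting on the cycle associated with $e_j^i$ is $-$ (up to the edge sign-flip equivalence) precisely when $j \in I$, i.e.\ precisely when the handle attached at the corresponding saddle destroys the local orientability of the complement. Both quantities are local invariants of the same piece of geometry---the gluing of a handle along a $1$-sphere in $S_{2g}|\N_i$---so their coincidence is intuitive, but it demands unwinding the sign convention from \cite{Prishlyak_nonoriented} and tracking it through the level-set decomposition of $f_i$ that produces $\N_i$. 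Since $k$ transports the edge labelling from $\reeb(f_1)$ to $\reeb(f_2)$ and the subset $I$ is the same for both, the signed graphs are then equivalent in the Lychak--Prishlyak sense, and Theorem~\ref{thm:Prishlyak_conj} supplies the desired topological conjugacy.

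A cleaner route avoiding the explicit sign identification proceeds by counting. Lemma~\ref{lemma:configurations_of_signs} produces $2^g$ equivalence classes of signed configurations on a graph of cycle rank $g$ whose vertices have degrees $1$ or $3$, and the all-positive configuration is excluded because it would force $S_{2g}$ to be orientable; this leaves at most $2^g - 1$ classes of signed Reeb graphs arising from simple Morse functions on $S_{2g}$ with $\reeb(f) \cong \Gamma$. Theorem~\ref{thm:grigorchuk} gives the matching count of $2^g - 1$ strong equivalence classes of epimorphisms $\pi_1(S_{2g}) \to F_g$. Corollary~\ref{corollary:Reeb_epi_for_simple_Morse} together with Theorem~\ref{theorem:realization_of_Reeb_epi} shows that the natural map from topological conjugacy classes to strong equivalence classes of Reeb epimorphisms is surjective, while Lychak--Prishlyak gives a bijection from conjugacy classes to sign classes. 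Comparing the three cardinalities forces both maps to be bijections, and the converse direction of the theorem follows.
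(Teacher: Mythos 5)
Your ``cleaner route'' at the end is exactly the paper's proof: it counts $2^g-1$ topological conjugacy classes via the injective Lychak--Prishlyak sign invariant (Theorem~\ref{thm:Prishlyak_conj}, Lemma~\ref{lemma:configurations_of_signs}, with the all-plus configuration excluded by non-orientability), counts $2^g-1$ strong equivalence classes via Theorem~\ref{thm:grigorchuk}, and uses surjectivity from Theorem~\ref{theorem:realization_of_Reeb_epi} and Corollary~\ref{corollary:Reeb_epi_for_simple_Morse} to force both maps to be bijections. Your first, direct attempt via identifying the Lychak--Prishlyak signs with the subset $I$ is left incomplete (as you acknowledge), but since you explicitly supersede it with the counting argument, the proposal as a whole is correct and follows the paper's approach.
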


\begin{proof}
	Denote by $\mathcal{M}(M,\Gamma)$ the set of all simple Morse functions $f$ on $M$ whose Reeb graphs are isomorphic to a graph $\Gamma$ and by $\mathcal{M}(M,\Gamma)/_{\operatorname{t. c.}}$ the set of their topological conjugacy classes. Moreover, let $\operatorname{Signs}(\Gamma)$ be the set of equivalence classes of configurations of signs in $\Gamma$. By Theorem \ref{thm:Prishlyak_conj} a natural map $\mathcal{M}(S_{2g},\Gamma)/_{\operatorname{t. c.}} \to \operatorname{Signs}(\Gamma)$ associating with a function its configuration of signs in its Reeb graphs as in \cite{Prishlyak_nonoriented} is injective. Now, let $\Gamma$ has vertices of degrees $1$ and $3$ and $\beta_1(\Gamma)=g$. Then for any configuration of signs in $\Gamma$ except the one with only pluses we can produce a simple Morse function on $S_{2g}$ which realizes it (see \cite{Michalak-TMNA} for a procedure). The configuration of signs with only pluses leads to a function on an orientable surface. By Lemma \ref{lemma:configurations_of_signs} the set $\operatorname{Signs}(\Gamma)$ has $2^r$ elements, so $\mathcal{M}(S_{2g},\Gamma)/_{\operatorname{t. c.}}$ has $2^r-1$ elements.
	
	Now, by Theorem \ref{theorem:realization_of_Reeb_epi} and Corollary \ref{corollary:Reeb_epi_for_simple_Morse} the map from $\mathcal{M}(S_{2g},\Gamma)/_{\operatorname{t. c.}}$ to the set of strong equivalence classes of Reeb epimorphisms of functions from $\mathcal{M}(S_{2g},\Gamma)$ is surjective. Since the latter set has also $2^r-1$ elements by Theorem \ref{thm:grigorchuk}, it is a bijection and the theorem is proved.	
\end{proof}


\section{Final remarks}\label{section:prospects}

The presented approach could be more used in analysis to study $C^{1}$-function with finitely many critical points, since it assigns such a function  a combinatoric invariant (the Reeb graph), or respectively a non-commutative algebraic invariant (the Reeb epimorphism). Conversely, it gives a geometric description of the set of homomorphisms from a fundamental group of a compact manifold into a finitely generated free group. At now the following problems seem to be next natural steps in such studies.

\vskip 8pt
	1. One of the problems we will focus on is the question about extendability of independent systems of hypersurfaces (cf. Section \ref{section:extendability_of_systems}). For an independent and regular system $\N$ let us denote by $E(\N)$ the maximum size of an independent and regular system which extends $\N$ and by $F(\varphi_\N)$ the maximum rank of a free group onto which there is an epimorphism which factorizes~$\varphi_\N$. It is clear that $E(\N) \leq F(\varphi_\N)$. We would like to know for which closed manifolds $M$ it is the equality for any independent system of hypersurfaces in $M$.
	Using Theorem \ref{theorem:epimorphism_is_induced_by_regular_and_independent_system} it can be shown that $\N$ is framed cobordant to a system which can be extended to size $F(\varphi_\N)$, however we do not know whether $\N$ can be extended itself. In particular, we will investigate the case when all numbers $F(\varphi_\N)$ are equal to $\corank(\pi_1(M))$.
	This problem can be also seen more algebraically. By Proposition \ref{proposition:size_of_maximum_extension_of_system} the extendability of $\N$ to the size $\corank(\pi_1(M))$ is equivalent to the equality $\corank(\pi_1(M)) = \corank\left(\pi_1(M)/\geng{\pi_1(\N)}^{\pi_1(M)}\right).$ Moreover, since $E(\N) = \reeb(M|N)+r$, where $r$ is the size of $\N$, this problem is related to the computability of the corank and Reeb number. 
	
\vskip 8pt
	2.The last section shows that the relations between conjugacy of simple Morse functions and strong equivalence of Reeb epimorphisms are quite complicated even for surfaces. The careful investigation of these relations should be done for higher-dimensional manifolds, especially for $3$-manifolds. However, in contrast to surfaces, the problem of conjugacy of Morse functions on manifolds of dimension at least $3$ is much more difficult due to their more complicated surgery description (cf. \cite{Prishlyak-3-manifolds}). One approach is to equip Reeb graphs with labels on edges, which correspond to diffeomorphism type of submanifolds corresponding to them. An isomorphism of Reeb graphs which preserves these labels is the first necessary condition for conjugacy of functions. In particular, for orientable $3$-manifolds regular level sets are orientable surfaces, so we equip edges only with non-negative integers corresponding to their genera. Thus the problem is how many strong equivalence classes of Reeb epimorphisms of simple Morse functions with fixed Reeb graph with labels are there and how many of them correspond to a single conjugacy class of these functions.
	Especially, we ask whether there is a finite number of these classes.
	
	It should be noted that there are some results regarding genera of regular level sets of Morse functions on closed orientable $3$-manifolds. For example, O. Saeki \cite[Theorem 6.5]{Saeki_sphere_fibers} showed that there is a Morse function $f\colon M \to \R$ with maximum genus among labels in $\reeb(f)$ at most one if and only if $M$ is the connected sum of copies of $\es^3$, $\es^2\times \es^1$ and lens spaces. Moreover, N. Kitazawa \cite[Theorem 1]{Kitazawa} described a realization of such a labelled graph as the Reeb graph of a Morse function on a $3$-manifold. 
 	
 	\vskip 8pt
	3. The structure of the set of homomorphisms $\operatorname{Hom}(G,F_r)$ for $G$ finitely generated group has been intensively studied by several authors by use of the Makanin--Razborov diagrams theory (cf. \cite{Makanin_eqs_in_free_groups,Razborov}). It led to the solution of Tarski problem of the existence of solution to a system of equation in a free finitely generated group and culminated in the Sela's works (cf.~\cite{Sela} and more recent articles of this author; see also \cite{Bestvina} for a nice and relatively uncomplicated introduction to this theory). 
	The Makanin--Razborov diagram of a group $G$ consists of its quotients and quotient maps in such a way that every homomorphism $\varphi\colon G\to F_r$ is \emph{M--R factorized} through
	some branch of quotients $G\xrightarrow{q_1} L_1 \xrightarrow{q_2} \ldots \xrightarrow{q_k} L_k$, where $L_k$ is a free group. It means that there is an element $\psi\in {\rm Hom}(L_k,F_r)$ and modular automorphisms $\alpha \in {\rm Mod}(G)$ and $\alpha_i \in {\rm Mod}(L_i)$ for $1\leq i<k$ such that $\varphi = \psi \circ q_{k} \circ \alpha_{k-1} \circ \ldots \circ \alpha_1 \circ q_1 \circ \alpha$. 
	The equivalence and strong equivalence relations in $\operatorname{Hom}(G,F_r)$ cannot be derived from Makanin--Razborov diagrams in general. However, these notions are closely related  for groups with branches of length $1$ in their M--R diagrams, e.g. for surface groups.
	
	The computation of the set $\epi(\pi_1(M),F_r)$ of epimorphisms $\pi_1(M) \to F_r$ and the set of their strong equivalence classes $\epi(\pi_1(M),F_r)/_\simeq$ may also rely on the calculations of framed cobordism classes of independent systems of hypersurfaces. 
	By Theorem \ref{theorem:epimorphism_is_induced_by_regular_and_independent_system} there is a bijection $\epi(\pi_1(M), F_r) \cong \H^{fr}_r (M)$ and $\overline{\overline{\Theta}} \colon \H^{fr}_r(M)/_{\diffd(M)} \to \epi(\pi_1(M),F_r)/_\simeq$ is surjective. Moreover, in some cases it is bijective e.g. for hyperbolic manifolds.
	More generally, there are bijections
	$$
	{\rm Hom}(\pi_1(M),F_r) \cong [M,\bigvee^r \es^1] \cong \Omega^{fr,1}_r(M)
	$$
	between the sets of all homomorphisms $\pi_1(M)\to F_r$, homotopy classes of maps $M \to \bigvee^r \es^1$ and the set $\Omega^{fr,1}_r(M)$ of all framed cobordism classes of systems of hypersurfaces of size $r$ in $M$ (the upper index $1$ denotes the codimension $1$ of submanifolds). Consequently, any description of the latter set would give information about the corresponding set of homomorphisms.


\subsubsection*{Acknowledgements}
We would like to thank the reviewer whose comments and remarks improved the text of the work.


\address

\end{document}